\newtheorem{thm}{Theorem}[section]
\newtheorem{prop}[thm]{Proposition}
\newtheorem{lem}[thm]{Lemma}
\newtheorem{claim}[thm]{Claim}
\newtheorem{conj}[thm]{Conjecture}
\newtheorem{mainthm}{Theorem}
\newtheorem{maincor}{Corollary}
\theoremstyle{definition}
\newtheorem{defn}[thm]{Definition}
\newtheorem{problem}[thm]{Problem}
\theoremstyle{remark}
\newtheorem{rem}[thm]{Remark}
\newtheorem{rems}[thm]{Remarks}
\newcommand{\V}{\mathrm{vol}}
\let\c@equation\c@thm
\numberwithin{equation}{section}
\title[]{ Volume growth and asymptotic cones of manifolds with nonnegative Ricci curvature}
\author{Zhu Ye}
\address[Zhu Ye]{Department of Mathematical Sciences,  Tsinghua University, Beijing 100084, People's Republic China.}
\email{yezhu@tsinghua.edu.cn}
\date{}
\begin{document}

	\maketitle
	
	\begin{abstract}
	Let $M$ be an open (i.e. complete and noncompact) manifold with nonnegative Ricci curvature. 	In this paper, we study  whether the volume growth order of $M$ is always greater than or equal to the dimension of some (or every) asymptotic cone of $M$. 
	
	Our first main result asserts that, under the conic at infinity condition, if the infimum of the volume growth order of $M$ equals $k$,  then there  exists an asymptotic cone of $M$ whose upper box dimension is at most $k$. In particular,  this yields a complete affirmative answer to our problem in the setting of nonnegative sectional curvature. 
		
In the subsequent part of the paper, we  extend or partially extend Sormani's results concerning $M$ with linear  volume growth to more relaxed volume growth conditions. Our approach also allows us to present a new proof of Sormani's sublinear diameter growth theorem for open manifolds with $\mathrm{Ric}\geq 0$ and linear volume growth.
	
Finally, we construct an example of an open $n$-manifold $M$ with $\mathrm{sec}_M\geq0$ whose volume growth order oscillates between 1 and $n$. 
	\end{abstract}

	\section{Introduction}
	The volume growth is a basic geometric quantity on open manifolds. If an open  $n$-manifold $M$ has nonnegative Ricci curvature, the Bishop volume comparison theorem (\cite{bishop}) asserts that
		$$\V(B_R(p))\leq \omega_nR^n,  \forall p\in M, R>0, $$
		where $\omega_n=\V(B_1(0^n))$ is the volume of the unit ball in the standard Euclidean space $\mathbb{R}^n$.  Yau \cite{yau2} independently proved that $M$ has at least linear volume growth:
	
	$$\V(B_R(p))\geq  CR,\forall R\geq1,$$
	where $C=C(n,p)>0$. 
	
	Since nonnegative Ricci curvature is preserved under metric rescaling, for any  sequence $r_i\to\infty$, Gromov's precompactness theorem guarantees that the sequence of pointed metric spaces $(r_i^{-1}M,p)$ converges to a proper length space $(Y,y)$ after passing to a subsequence. Any such $(Y,y)$ is called an asymptotic cone of $M$.

	In this paper, we  investigate  the relationship between the volume growth of $M$ and the dimensions of its asymptotic cones. 
	
	We  define the volume order function $f(R)$ by $\V (B_R(p))=R^{f(R)}$, and define                    
			\begin{equation*}
		\mathrm{IV}(M)	=\liminf\limits_{R\to\infty}f(R),\,\mathrm{SV}(M)	=\limsup\limits_{R\to\infty}f(R).
	\end{equation*}	
	The above definitions are independent of  the choice of the base point $p\in M$.  We refer to $\mathrm{IV}(M)$ (resp. $ \mathrm{SV}(M)$) as the infimum (resp. supremum) of volume growth order of $M$.

	Consider the example of a rotational paraboloid. It has volume growth order $\frac{3}{2}$, while  the half line $[0,\infty)$ is its unique asymptotic cone, which is $1$-dimensional. This shows that the volume growth order of $M$ may be strictly larger than  the dimension of its asymptotic cone. On the other hand, to the best of the author's knowledge, there are no known examples with $\mathrm{IV}(M)<k$, but some asymptotic cone of $M$ has dimension $\geq k$ (in some sense).
		
	This motivates the following problem: 
	
	\begin{problem}\label{problem1}
	Is it true that the volume growth order of $M$ must be no smaller than the ``dimension''  of some (or every) asymptotic cone of $M$?
	\end{problem}
\begin{rem}
In the last exercise on  page 59 of \cite{mfdsnonpositivecurvature}, Gromov  proposes  studying the relationship between the volume growth  and the dimension of the asymptotic cone for open manifolds with nonnegative sectional curvature. 
\end{rem}

By Cheeger-Colding \cite{almostrigidity,CCI}, if $M^n$ has  Euclidean volume growth, then any asymptotic cone $(Y,y)$ of $M$ is a metric cone of Hausdorff dimension $n$ and $y$ is a cone point; if $M$ fails to have Euclidean volume growth, then any asymptotic cone of $M$ has Hausdorff dimension at most $n-1$. This provides a satisfying affirmative answer to Problem \ref{problem1} in the case $\mathrm{IV}(M)\geq n-1$.

 In the linear growth  case,  Sormani's work \cite{Sormani2000} shows that $M$ has a unique asymptotic cone, which is isometric to either $([0,\infty), 0)$ or $(\mathbb{R},0)$. 
 
 Partial progress has also been made by the author in \cite{yezhucrelle} (Theorem A): if every asymptotic of $M$ splits off a Euclidean $\mathbb{R}^k$ factor, then either $\mathrm{IV}(M)=\mathrm{SV}(M)=k$ or $\mathrm{IV}(M)\geq k+1$.

However, in general, even under the assumption of
nonnegative sectional curvature, there appears  to be no complete solution to 
 Problem \ref{problem1}.  A partial answer  has been given by Tapp in his thesis (\cite{tappthesis}  Theorem 5.4.1):

\begin{thm} \label{Tapp}
Let $M^n$ be an open manifold with $\mathrm{sec}_M\geq 0$. Denote by $(Y,y)$ the asymptotic cone of $M$. If $M$ also has an upper curvature bound $\mathrm{sec}_M\leq K$ for some constant $K\geq 0$, then  $\mathrm{dim}_{H}(Y)\leq \mathrm{IV}(M)$.
\end{thm}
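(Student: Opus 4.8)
The plan is to deduce the theorem from the effective volume lower bound
\[
\V(B_R(p))\ \ge\ c\,R^{\,\dimH(Y)}\qquad(R\ \text{large}),
\]
for some $c=c(M)>0$; since $\V(B_R(p))=R^{f(R)}$ this forces $f(R)\ge\dimH(Y)+\log c/\log R$, hence $\mathrm{IV}(M)=\liminf_{R\to\infty}f(R)\ge\dimH(Y)$.

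First I would fix the structure of the cone: because $\sec_M\ge0$, Toponogov comparison shows $(Y,y)=C(Z)$ is a Euclidean cone over a compact Alexandrov space $Z$ of curvature $\ge1$, with $\dim Z=:m$ an integer in $\{0,\dots,n-1\}$ and $\dimH(Y)=m+1$. If $m=0$ then $\dimH(Y)\le1\le\mathrm{IV}(M)$ by Yau, so assume $m\ge1$. Fix a point $z_0\in Z$ carrying a bi-Lipschitz chart $\phi\colon B^m_\delta\to Z$ onto a neighbourhood of $z_0$ (such points are dense, being the $(m,\varepsilon)$-strained points of Burago--Gromov--Perelman), so that $d_Z(\phi(\xi),\phi(\xi'))\ge c_1|\xi-\xi'|$. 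For each $\xi$ I would take a ray $\gamma_\xi$ from $p$ pointing at infinity toward $\phi(\xi)$ --- obtained as a sublimit of minimising segments from $p$ to points $q^\xi_i\to\infty$ approximating the radius-one point of $Y$ in the direction $\phi(\xi)$ (working directly with those segments, truncated at radius $R$, avoids any question of uniqueness of the asymptotic cone). Monotonicity of comparison angles under $\sec_M\ge0$ gives $\angle_p(\gamma_\xi,\gamma_{\xi'})\ge d_Z(\phi(\xi),\phi(\xi'))\ge c_1|\xi-\xi'|$, and Toponogov's hinge comparison turns this into a genuine spreading of the rays that holds at every radius $t$ with no loss:
\[
d_M\big(\gamma_\xi(t),\gamma_{\xi'}(t)\big)\ \ge\ 2t\sin\!\big(\tfrac12\angle_p(\gamma_\xi,\gamma_{\xi'})\big)\ \ge\ c_2\,t\,|\xi-\xi'|\qquad(t>0).
\]

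I would then harvest separated points. Fix a small $r_0>0$. On the level $t_b=b\,r_0$ (for $b_0\le b\le\lfloor R/r_0\rfloor$, where $b_0\sim(c_2\delta)^{-1}$, losing boundedly many levels) take a $(3r_0/c_2t_b)$-separated subset of $B^m_\delta$, of size $\gtrsim(c_2\delta t_b/r_0)^m$: by the last display the points $\gamma_\xi(t_b)\in B_R(p)$ are $3r_0$-separated within the level, while points on different levels are $r_0$-apart because their distances to $p$ differ by $\ge r_0$. This yields $N\gtrsim\sum_b(\delta t_b/r_0)^m\gtrsim\delta^m(R/r_0)^{m+1}$ pairwise $r_0$-separated points in $B_R(p)$, whence, with $v_0:=\inf_{x\in M}\V(B_{r_0/2}(x))$ and disjointness of the balls $B_{r_0/2}$ about those points,
\[
\V(B_{2R}(p))\ \ge\ N\,v_0\ \gtrsim\ \delta^m r_0^{-(m+1)}\,v_0\,R^{m+1},
\]
which is what we wanted --- \emph{provided} $v_0>0$.

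The hard part is exactly $v_0>0$: a uniform lower bound for the volumes of balls of a fixed radius in $M$, equivalently a uniform lower bound for the injectivity radius. Here the hypothesis $\sec_M\le K$ is indispensable, and this is precisely the place it is used (and why it cannot be dropped --- compare the oscillating example of the last section). The bound $\sec_M\le K$ forces the conjugate radius of $M$ to be $\ge\pi/\sqrt K$ everywhere (Rauch comparison), so by Günther's volume inequality it suffices to bound the length of geodesic loops from below; the task is therefore to rule out \emph{arbitrarily short geodesic loops along a divergent sequence} $x_i\to\infty$. For this I would pass to the pointed limit $(M,x_i)\to(W,w)$ --- an Alexandrov space of curvature $\ge0$, the collapse having bounded curvature since $|\sec_M|\le K$ --- and combine the soul decomposition of $M$ with the structure of $W$ to contradict the presence of such loops. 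I expect this no-collapse statement to be the main obstacle; the remaining inputs above (rays with prescribed asymptotic direction, monotonicity of comparison angles, bi-Lipschitz charts at strained points, Toponogov's hinge comparison) are standard Alexandrov and comparison geometry.
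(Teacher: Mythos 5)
First, a framing point: the paper does not prove this statement at all --- it is quoted as Theorem 5.4.1 of Tapp's thesis, and the paper's own contribution is the strictly stronger Corollary~\ref{coroofA} (no upper curvature bound, and upper box dimension instead of Hausdorff dimension), proved by an entirely different route. So I am judging your argument on its own terms. The comparison-geometry half of your proposal is fine and standard: the asymptotic cone is $C(Z)$ with $Z$ compact Alexandrov of curvature $\geq 1$, monotonicity of comparison angles gives $\angle_p(\gamma_\xi,\gamma_{\xi'})\geq d_Z$, the hinge version of Toponogov spreads the rays linearly, and the level-by-level count produces $\gtrsim R^{m+1}$ pairwise $r_0$-separated points in $B_R(p)$. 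Granting a uniform lower bound $v_0=\inf_x\V(B_{r_0/2}(x))>0$, the conclusion follows.

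The genuine gap is exactly the step you flag yourself: you never prove $v_0>0$, and this is not a technical loose end --- it is the entire content of the theorem. Everything before it uses only $\mathrm{sec}_M\geq 0$; the hypothesis $\mathrm{sec}_M\leq K$ has to carry the whole proof at this one point, and your sketch does not close it. Klingenberg/Rauch correctly reduces the problem to excluding arbitrarily short geodesic loops along a divergent sequence $x_i$, but the proposed resolution --- ``pass to the pointed limit $(M,x_i)\to(W,w)$ and combine the soul decomposition with the structure of $W$ to get a contradiction'' --- is circular in spirit: a collapsed pointed limit with $\dim W<n$ is precisely the scenario you must rule out, and nothing in the soul decomposition or in the Alexandrov structure of $W$ visibly forbids it. No argument is given for why bounded-curvature collapse at infinity is incompatible with $\mathrm{sec}\geq 0$, and this is a nontrivial assertion that would need a real proof (or a citation). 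As written, the proof establishes only the conditional statement ``no collapse of unit balls implies $\dimH(Y)\leq\mathrm{IV}(M)$.''

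It is worth contrasting this with how the paper gets the stronger result. Theorem~\ref{keythm!} uses the slope lemma (Lemma~\ref{slope}) to select blow-down scales at which the \emph{renormalized limit measure} $\nu$ on some asymptotic cone satisfies $\nu(B_R(y))\leq R^{k}$ with $k=\mathrm{IV}(M)$, and Proposition~\ref{mainestimate} converts an upper box dimension $k+1$ of a metric cone into the lower bound $\mathrm{SV}(C(X),\nu)\geq k+1$ via the Brunn--Minkowski inequality. Because all volumes are measured by $\nu$ (which is normalized so that $\nu(B_1(y))=1$ by construction), no pointwise lower bound on volumes of unit balls in $M$ is ever needed --- which is precisely why the hypothesis $\mathrm{sec}_M\leq K$ can be discarded. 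If you want to salvage your approach, the non-collapsing lemma is where all the work lies; alternatively, replacing ``count $r_0$-separated points times $v_0$'' by a renormalized-measure version of your separated-ray construction would likely recover the paper's argument.
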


 In Theorem \ref{Tapp}, $\mathrm{dim}_{H}(Y)$ means the Hausdorff  dimension of $Y$.  We note that there exist open manifolds of $\mathrm{sec}_M\geq 0$ which do not have a curvature upper bound (\cite{crokekarcher}).

In this paper,   an open manifold $M$ with $\mathrm{Ric}\geq 0$ is said to be conic at infinity if every asymptotic cone $(Y,y)$ of $M$ is a metric cone (we do not assume that $y$ is a tip point).   

Our first main result provides a partial affirmative answer to Problem \ref{problem1}:
\begin{mainthm}\label{nonnegativesec}
Let $M^n$ be an open manifold with  $\mathrm{Ric}_M\geq 0$ that is conic at infinity.  Then there exists an asymptotic cone $(Y,y)$ of $M$ such that $\mathrm{dim}_{ub}(Y)\leq \mathrm{IV}(M)$.
\end{mainthm}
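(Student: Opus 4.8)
The plan is to choose a sequence of radii along which the volume exponent tends to $k:=\mathrm{IV}(M)$, pass to the associated asymptotic cone $Y$, identify the renormalized limit measure as a cone measure of some exponent $m\le k$, and then read off $\mathrm{dim}_{ub}(Y)\le m$ from the structure theory of $\mathrm{RCD}$ cones. Write $h(R)=\log \V(B_R(p))$, so $f(R)=h(R)/\log R$; by Bishop--Gromov, $h$ is nondecreasing and $h(R)-n\log R$ is nonincreasing, so $b(R):=h(R)-k\log R$ is Lipschitz in $\log R$ with constant $\le n$, and $\liminf_{R\to\infty}b(R)/\log R=0$. First I would fix $s_j\to\infty$ with $b(s_j)/\log s_j\to 0$ and let $r_j$ be the largest minimizer of $b$ on the backward window $[e^{-1}s_j,s_j]$. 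A Lipschitz comparison gives $b(r_j)/\log r_j\to 0$, hence $f(r_j)\to k$, while $b(r_j)\le b(R)$ for all $R\in[\rho_j r_j,r_j]$ with $\rho_j\in[e^{-1},1]$; after passing to a subsequence, $(r_j^{-1}M,p)$ converges in the pointed Gromov--Hausdorff sense to a metric cone $(Y,y)$ (this is the hypothesis), $\rho_j\to\rho_\infty\in(0,1]$, and the renormalized measures converge to a limit measure $\nu$ with $\nu(B_1(y))=1$ (Colding's volume convergence).

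The next step is to control $\mathrm{dim}_{ub}(Y)$ via $\nu$. Combining the monotonicity of $h$ and of $h(R)-n\log R$ with $\liminf f=k$ shows that the ``trough'' at $r_j$ widens in log-scale: for every fixed $\rho>0$ one has $f(\rho r_j)\to k$. Using this together with the uniqueness of the renormalized limit measure of a Ricci limit, one gets that $\nu(B_\rho(y))=\lim_j \V(B_{\rho r_j}(p))/\V(B_{r_j}(p))$ is finite and positive for a.e.\ $\rho$, and that $\rho\mapsto\nu(B_\rho(y))$ is multiplicative; hence $\nu(B_\rho(y))=\rho^{m}$ for a constant $m\in[0,n]$. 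The choice of $r_j$ then forces $m\le k$: in the generic case $\rho_\infty<1$, for any fixed $\rho'\in(\rho_\infty,1)$ one has $b(r_j)\le b(\rho' r_j)$ for large $j$, so $h(\rho'r_j)-h(r_j)\ge k\log\rho'$ and therefore $\nu(B_{\rho'}(y))\ge (\rho')^{k}$, i.e.\ $(\rho')^{m}\ge(\rho')^{k}$, giving $m\le k$; the degenerate case $\rho_\infty=1$, in which $b$ is essentially nondecreasing on backward windows near $r_j$, is reduced to the previous one by relocating the base scale $s_j$.

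Now I would extract the box-dimension bound from $\nu(B_\rho(y))=\rho^m$ with $m\le k$. Let $Z=\{x:d(x,y)=1\}$ be the cross-section, so $Y$ is isometric to the metric cone $C(Z)$ and, again by the scale-covariance of $\nu$, $\nu=c\,r^{m-1}\,dr\,d\nu_Z$ for a finite measure $\nu_Z$ on $Z$. Since $Y$ is a Ricci limit of manifolds with $\mathrm{Ric}\ge0$ it is an $\mathrm{RCD}(0,n)$ space, and comparing this with the cone representation $(C(Z),r^{m-1}dr\,d\nu_Z)$ via the characterization of $\mathrm{RCD}$ cones (Ketterer; in the limit setting already Cheeger--Colding), $(Z,\nu_Z)$ is an $\mathrm{RCD}(m-2,m-1)$ space. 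Bishop--Gromov on $Z$ gives a uniform lower bound $\nu_Z(B_r(z))\ge c'r^{m-1}$ for $z\in Z$, $0<r\le\operatorname{diam}Z$, whence a packing argument bounds the $r$-covering number of $Z$ by $C r^{-(m-1)}$ and $\mathrm{dim}_{ub}(Z)\le m-1$. Since $\mathrm{dim}_{ub}(C(Z))=\mathrm{dim}_{ub}(Z)+1$, we conclude $\mathrm{dim}_{ub}(Y)\le m\le k=\mathrm{IV}(M)$. (When $M$ has Euclidean volume growth, $k=n$ and the statement is already contained in Cheeger--Colding; so one may assume $k<n$, which is also what makes the exponent $m-1\ge0$ meaningful above.)

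The hard part is the bound $m\le k$ in the second step. The point is that $f(\rho r_j)\to k$ only controls the volume exponent to leading order, while $\nu(B_\rho(y))=\rho^m$ depends on the second-order behaviour of $\log\V(B_R(p))-k\log R$ over a multiplicative window; steering this quantity in the favourable direction is exactly what dictates the ``leftward-minimizing'' choice of $r_j$ and the handling of the degenerate case $\rho_\infty=1$. This step is also where the conic-at-infinity hypothesis does essential work: the $\mathrm{RCD}$-cone rigidity used in Step three replaces the lower volume bound on small metric balls which Tapp obtained from a two-sided curvature bound, and which is otherwise unavailable under only $\mathrm{Ric}\ge0$.
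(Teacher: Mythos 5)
Your overall architecture (select good blow-down scales via a monotonicity/slope argument, control the renormalized limit measure $\nu$, then convert measure bounds into an upper box dimension bound on the cone) is the same as the paper's, but the execution has a genuine gap at its crux. The unjustified step is the claim in your second paragraph that $\rho\mapsto\nu(B_\rho(y))$ is multiplicative, hence $\nu(B_\rho(y))=\rho^m$. The hypothesis ``conic at infinity'' only says the limit \emph{metric} is a cone (and the paper explicitly allows $y$ not to be the apex, so your cross-section $Z=\{d(\cdot,y)=1\}$ need not even be the link); it says nothing about the renormalized limit measure being a cone measure. Your appeal to ``uniqueness of the renormalized limit measure'' does not produce multiplicativity: writing $\nu(B_{\rho\sigma}(y))$ as a product of ratios requires that $\lim_j \V(B_{\rho\sigma r_j}(p))/\V(B_{\sigma r_j}(p))=\lim_j \V(B_{\rho r_j}(p))/\V(B_{r_j}(p))$, i.e.\ asymptotic scale-invariance of the volume ratios along $r_j$ --- which is exactly the self-similarity you are trying to prove, not a consequence of anything you have established. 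Everything downstream collapses with it: the decomposition $\nu=c\,r^{m-1}dr\,d\nu_Z$, the identification of $(Z,\nu_Z)$ as $\mathrm{RCD}(m-2,m-1)$ via Ketterer (which in any case is problematic when $m$ is close to $1$, as it can be since $\mathrm{IV}(M)\geq 1$, and whose parameters do not match the ambient $\mathrm{RCD}(0,n)$ condition when $m<n$), and the Ahlfors-type lower bound on $\nu_Z$. A secondary defect: your backward window $[e^{-1}s_j,s_j]$ has fixed multiplicative width, so the inequality $\nu(B_{\rho'}(y))\geq(\rho')^{k}$ is only obtained for $\rho'$ in a range bounded away from $0$, and without self-similarity it cannot be propagated to all scales.

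For comparison, the paper avoids both issues. Its slope lemma uses \emph{forward} windows of growing log-width $[1,i]$, which yields an upper bound $\nu(B_R(y))\leq R^{k}$ for all $R\geq 10$ on some asymptotic cone (Theorem \ref{keythm!}) --- note this is an upper bound at large radii, not a lower bound at small radii, and requires no self-similarity of $\nu$. The conversion to a dimension bound (Proposition \ref{mainestimate}) is then a packing argument on the metric cone: $\epsilon$-separated points on the cross-section at radius $R=10\epsilon^{-1}$ give, via Brunn--Minkowski applied to midpoint sets between a fixed ball near the apex and these points, pairwise disjoint sets of definite $\nu$-measure inside $B_R$, forcing $\mathrm{SV}(C(X),\nu)\geq \mathrm{dim}_{ub}(C(X))$ for an \emph{arbitrary} renormalized limit measure. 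If you want to salvage your route, you would need to either prove that the renormalized limit measure on a metric-cone asymptotic cone is necessarily a cone measure (which is not known and is precisely what the paper's Proposition \ref{mainestimate} is designed to circumvent), or replace your Step 3 by a measure-independent packing argument of this kind.
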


In Theorem \ref{nonnegativesec}, the notation  $\mathrm{dim}_{ub}(Y)$ denotes the upper box dimension of $Y$; we will recall its definition in Section 2. Note that the  Hausdorff dimension of a metric space is always no largar than its upper box dimension, so $\mathrm{dim}_{H}(Y)\leq \mathrm{IV}(M)$ also holds in Theorem \ref{nonnegativesec}.

  When $M$ has nonnegative sectional curvature, its asymptotic cone is unique and is a metric cone. Therefore,  Theorem \ref{nonnegativesec} implies that  Theorem \ref{Tapp} remains true even without the assumption of a curvature upper bound:

\begin{maincor}\label{coroofA}
Let $M^n$ be an open manifold with $\mathrm{sec}_M\geq 0$, and let $(Y,y)$ be its asymptotic cone. Then  $\mathrm{dim}_{ub}(Y)\le \mathrm{IV}(M)$.
\end{maincor}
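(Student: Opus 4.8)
The plan is to derive Corollary B directly from Theorem A, so the only real content is verifying that an open manifold $M^n$ with $\mathrm{sec}_M\geq 0$ automatically satisfies the hypotheses of Theorem A — namely that it is conic at infinity — and that moreover its asymptotic cone is \emph{unique}, so that the asymptotic cone $(Y,y)$ produced by Theorem A is the same as the one in the statement of Corollary B. First I would recall the structure theory of asymptotic cones under nonnegative sectional curvature: by the splitting/soul arguments going back to Cheeger–Gromoll and developed by Greene–Wu, Kasue, and others, an open manifold with $\mathrm{sec}_M\geq 0$ has a unique asymptotic cone $(Y,y)$, and this cone is a metric cone with vertex $y$. (Concretely, one fixes a soul $S$ of $M$, uses the Sharafutdinov retraction and the monotonicity of the normalized distance functions from $S$ along Busemann-type rays to show that the rescalings $(r_i^{-1}M,p)$ converge, independently of the sequence $r_i\to\infty$, to a metric cone; this is classical and I would simply cite it.) In particular $M$ is conic at infinity in the sense defined just before Theorem A.

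Given this, the argument is immediate: Theorem A applied to $M$ (which has $\mathrm{Ric}_M\geq 0$ since $\mathrm{sec}_M\geq 0$, and is conic at infinity) yields \emph{some} asymptotic cone $(Y',y')$ with $\mathrm{dim}_{ub}(Y')\leq \mathrm{IV}(M)$. But by uniqueness of the asymptotic cone, $(Y',y')$ is isometric to $(Y,y)$, and the upper box dimension is a bi-Lipschitz — indeed an isometry — invariant of pointed proper metric spaces (upper box dimension does not depend on the basepoint for a fixed space, and here the whole spaces are isometric). Hence $\mathrm{dim}_{ub}(Y)\leq \mathrm{IV}(M)$, which is exactly Corollary B.

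The only place requiring a word of care is the assertion of uniqueness of the asymptotic cone and its conic structure; everything else is a one-line deduction. I expect this to be the main (and essentially the sole) obstacle, though it is not really an obstacle so much as an invocation of known results — one should make sure to cite the correct reference for the uniqueness of the tangent cone at infinity under $\mathrm{sec}\geq 0$ (e.g. via the work on the structure of complete nonnegatively curved manifolds, or directly Kasue's or Sormani's treatments), and to note that the basepoint $y$ may be taken to be the cone vertex but that this is not needed for the dimension bound. Thus the proof of Corollary B reduces to: (i) $\mathrm{sec}_M\geq 0 \Rightarrow M$ conic at infinity with a unique asymptotic cone; (ii) apply Theorem A; (iii) transport the dimension bound across the isometry. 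I would write it as a short paragraph rather than a displayed multi-step proof.
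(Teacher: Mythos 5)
Your proposal is correct and follows essentially the same route as the paper: the paper also deduces Corollary B from Theorem A by invoking the classical fact that an open manifold with $\mathrm{sec}\geq 0$ has a unique asymptotic cone which is a metric cone, so the cone produced by Theorem A is the cone. The only cosmetic remark is that the uniqueness is usually obtained from Toponogov monotonicity of comparison angles between rays (yielding the Tits cone over the ideal boundary) rather than via the soul and the Sharafutdinov retraction, but either citation suffices.
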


In the next part of the paper, we aim to extend or partially extend classical results on manifolds with linear volume growth to more relaxed volume growth conditions. 
Sormani has made a series of pioneering contributions to the study of open manifolds with $\mathrm{Ric}\geq 0$ and linear volume growth (\cite{Sormani98}, \cite{Sormani2000}, \cite{Sormanifinitelygenerated}, \cite{Sormaniharmonic}). In \cite{Sormani98,Sormani2000}, Sormani proved:
 						\begin{thm}\label{Sormanisublineardiamgrowth}
		 	    						Let $M$ be an open manifold with $\mathrm{Ric}\geq 0$. If $M$ has linear volume growth, then one of the following holds:
		 	    				\begin{enumerate}
		 	    				\item  $M$ is the metric product  $\mathbb{R}\times N$ for some compact manifold $N$.
		 	    				\item $M$ has sublinear dimeter drowth:
		 	    					 	    						\begin{equation} \label{sublinear}
		 	    					 	    						\lim\limits_{R\to\infty} \frac{\mathrm{diam}(\partial B_R(p)}{R}=0 \text{ for some (hence any) $p\in M$}.
		 	    					 	    						\end{equation}
		 	    				\end{enumerate}		
		 	    						\end{thm}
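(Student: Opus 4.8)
The plan is to argue by contradiction: suppose $M$ has linear volume growth but the diameter growth of the geodesic spheres $\partial B_R(p)$ is \emph{not} sublinear, i.e. there is a sequence $R_i\to\infty$ and a constant $\delta>0$ with $\mathrm{diam}(\partial B_{R_i}(p))\geq \delta R_i$. The idea is to find, along such a sequence, two essentially different minimal rays emanating from $p$ (or more precisely, two points on $\partial B_{R_i}(p)$ whose mutual distance is comparable to $R_i$), and to use the linear volume bound $\V(B_R(p))\leq CR$ together with the Bishop--Gromov volume comparison to squeeze the geometry: roughly, each of these two ends of $M$ must carry a definite amount of volume in every annulus $B_{2R}(p)\setminus B_R(p)$, so that the total volume grows at least linearly along \emph{every} $R$; combined with the fact that a linear lower bound is forced in each end, a careful bookkeeping of the volumes of tubular neighborhoods of the corresponding minimal segments will show that $M$ must actually split. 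The splitting is then extracted from the Cheeger--Gromoll splitting theorem: the existence of two long minimal segments joining points near $\partial B_R(p)$ through $B_{R}(p)$, with length $\sim 2R$, yields in the limit a line in an asymptotic cone; pulling this back (this is where Sormani's original technique of ``minimal geodesics and the excess function'' enters) produces a line in $M$ itself, so $M=\mathbb{R}\times N$, and the linear volume growth forces $N$ to be compact.

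The key steps, in order, are as follows. First I would set up the contradiction hypothesis and, for each large $i$, pick $x_i,y_i\in\partial B_{R_i}(p)$ with $d(x_i,y_i)\geq \tfrac{\delta}{2} R_i$; let $\gamma_i$ be a minimal geodesic from $x_i$ to $y_i$. Second, I would show that $\gamma_i$ must travel ``deep'' into $B_{R_i}(p)$ — that is, its midpoint $m_i$ satisfies $d(p,m_i)\leq (1-c)R_i$ for some $c=c(\delta)>0$ — using the triangle inequality and the fact that $x_i,y_i$ lie on the same sphere; this gives a definite lower bound on the excess function $e_{x_i,y_i}(p)=d(x_i,p)+d(p,y_i)-d(x_i,y_i)$, or alternatively a definite \emph{upper} bound, depending on which way the estimate goes, and this is the quantitative heart of the matter. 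Third, I would estimate the volume of a tubular neighborhood of $\gamma_i$ inside $B_{2R_i}(p)$ from below using Bishop--Gromov and the lower Ricci bound, obtaining $\V(B_{2R_i}(p))\geq c' R_i$ with $c'$ independent of $i$ — this is consistent with linear growth, so by itself it is not yet a contradiction; the contradiction comes from iterating. Fourth — and this is the crucial point — I would show that the same lower estimate holds not just along the sequence $R_i$ but forces, via the \emph{uniform} lower volume bound in annuli that Yau-type arguments provide, that the halfspace geometry on \emph{both} sides of $\gamma_i$ persists; letting $i\to\infty$ one extracts a limit space containing a line, hence (by Cheeger--Gromoll applied either to a tangent cone at infinity or, via Sormani's pull-back argument, directly to $M$) a splitting $M=\mathbb{R}\times N$. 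Fifth, given the splitting, linear volume growth of $\mathbb{R}\times N$ immediately forces $\V(N)<\infty$ and then, since $N$ has $\mathrm{Ric}\geq 0$ and finite volume, $N$ is compact; this lands in alternative (1).

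The main obstacle I anticipate is the passage in the fourth step from ``two long minimal segments exist along a subsequence'' to an \emph{actual line in $M$} (as opposed to merely a line in some asymptotic cone, which would only give a splitting of the cone and not of $M$). In the sectional-curvature setting one would use the soul theorem and Sharafutdinov retraction, but under only $\mathrm{Ric}\geq 0$ this is exactly the delicate analytic input of Sormani's papers \cite{Sormani98,Sormani2000}: one must control the excess function $e_{x_i,y_i}$ on all of $B_{R_i}(p)$ — showing $e_{x_i,y_i}\to 0$ uniformly on compact sets after rescaling is false in general, so instead one shows the linear volume bound forces the \emph{Busemann-type} function associated with the rays to be (almost) harmonic and then genuinely harmonic in the limit, triggering the splitting. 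Reproducing this rigorously — in particular verifying that the diameter-growth hypothesis, rather than some stronger hypothesis, suffices to start the machine — is where the real work lies; the volume comparison estimates in steps three and five are routine. An alternative route that sidesteps some of this, and which I would pursue in parallel, is to quote Sormani's structure theorem that a linear-volume-growth $M$ with $\mathrm{Ric}\geq 0$ has at most two ends and, if it has two ends, splits — then the dichotomy (1)/(2) reduces to analyzing the one-ended case, where one shows directly that a one-ended manifold of linear volume growth has sublinear diameter growth by bounding $\mathrm{diam}(\partial B_R(p))$ against $\V(B_{2R}(p))/\V(B_R(p))$-type ratios, which tend to a constant and, when combined with a one-endedness/connectedness argument for the spheres, force the normalized diameter to $0$.
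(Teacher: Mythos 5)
Your plan has a genuine gap at exactly the point you flag as ``where the real work lies,'' and the mechanism you sketch for closing it does not work as stated. Starting from $x_i,y_i\in\partial B_{R_i}(p)$ with $d(x_i,y_i)\geq\tfrac{\delta}{2}R_i$ and a minimal geodesic $\gamma_i$ between them, nothing forces $\gamma_i$ (or its midpoint) to stay within a fixed compact set, or even within distance $(1-c)R_i$ of $p$: the triangle inequality only gives $d(p,m_i)\geq R_i-\tfrac14 d(x_i,y_i)$ and $d(p,m_i)\leq R_i+\tfrac14 d(x_i,y_i)$, so the midpoints can escape to infinity. A subsequential limit of the $\gamma_i$ then produces at best a line in a pointed limit based at the escaping midpoints (or a line/segment in an asymptotic cone), which splits \emph{that} limit space but not $M$; alternative (1) of the theorem requires $M$ itself to split. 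Deriving the splitting of $M$ (or, in the non-split case, the sublinear diameter estimate) from this configuration is precisely the content of Sormani's excess/Busemann analysis, which your proposal defers to rather than reproduces. Your fallback route has a separate problem: bounding $\mathrm{diam}(\partial B_R(p))$ by counting $10$-separated points, each contributing volume $\gtrsim R\cdot\V(B_1(x))$, requires a lower bound on $\V(B_1(x))$ for $x$ near $\partial B_R(p)$; linear volume growth does not prevent unit balls from collapsing at infinity, so this count does not close without an additional non-collapsing hypothesis (this is exactly the role of hypothesis (1.5) with $\alpha=1$ in Theorem C of the paper, which is \emph{not} available here).

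For contrast, the paper's appendix proof avoids both issues by working in an asymptotic cone and using the nonbranching property of Ricci limit spaces: if the conclusion fails, some asymptotic cone $(Y,y)$ has $\#\partial B_R(y)=\infty$ for all $R$ (this trichotomy --- ray, line, or infinitely many sphere points --- is Proposition \ref{numberofboundarypoint}, proved via nonbranching); one then picks $m$ points $a_1,\dots,a_m\in\partial B_{R_0}(y)$, pulls them back to $a_j^{(i)}\in r_i^{-1}M$, and considers the sets $S_j^{(i)}$ of minimal geodesics from $\overline{B}_1(p)$ to $a_j^{(i)}$. Bishop--Gromov gives $\V(S_j^{(i)})\geq C(n)\V(B_1(p))\,r_i$ --- crucially the volume lower bound comes from the fixed ball $B_1(p)$, so no non-collapsing at infinity is needed --- and nonbranching makes these $m$ sets essentially disjoint, so $\V(B_{2R_0r_i}(p))\geq mC(n)\V(B_1(p))r_i$ with $m$ arbitrary, contradicting linear volume growth. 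If you want to salvage your approach, you would either have to carry out Sormani's Busemann-function argument in full, or switch to an argument of this second type in which the many-directions phenomenon is detected in the asymptotic cone and converted into a volume contradiction anchored at the basepoint.
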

 The results of Sormani \cite{Sormanifinitelygenerated} then imply that $\pi_1(M)$ is finitely generated. In \cite{Sormaniharmonic}, Sormani proved that there exists a nonconstant polynomial growth harmonic function on $M$ with linear volume growth if and only if $M$ splits. We note that in Kasue \cite{polynomialharmonicrigidity}, the proof of Theorem A implies that an open manifold with $\mathrm{Ric}\geq 0$ and sublinear diameter growth admits no nonconstant polynomial growth harmonic functions. So the main result in \cite{Sormaniharmonic}  also follows from Sormani \cite{Sormani2000} and Kasue \cite{polynomialharmonicrigidity}.  

We first present the following corollary of Theorem \ref{nonnegativesec}:

\begin{maincor} \label{coroBofA}
 Let $M^n$ be an open manifold with $\mathrm{Ric}_M\geq 0$ and a unique asymptotic cone $(Y,y)$. Assume that one of the following holds:
 
  (B1) $\mathrm{IV}(M)=1$;
  
  (B2) $\mathrm{IV}(M)<2$, and $Y$  is a metric cone.
   
  Then the conclusion of Theorem \ref{Sormanisublineardiamgrowth} holds.
  In particular, $\pi_1(M)$ is finitely generated; if $M$  admits a nonconstant polynomial growth harmonic function, then $M$ is isometric to  $\mathbb{R}\times N$ for some compact $N$. 
\end{maincor}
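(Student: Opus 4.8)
The plan is to use Theorem \ref{nonnegativesec} to pin down the unique asymptotic cone $(Y,y)$: in both cases (B1) and (B2) it will turn out to be isometric to $([0,\infty),0)$ or $(\mathbb{R},0)$. The case $Y\cong\mathbb{R}$ will be alternative (1) of Theorem \ref{Sormanisublineardiamgrowth}, the case $Y\cong[0,\infty)$ will give alternative (2), and the statements on $\pi_1(M)$ and on harmonic functions will then follow from \cite{Sormanifinitelygenerated} and from Kasue \cite{polynomialharmonicrigidity}. The first step is to bound $\mathrm{dim}_{ub}(Y)$. Under (B2), $Y$ is a metric cone and is the unique asymptotic cone, so $M$ is conic at infinity and Theorem \ref{nonnegativesec} gives $\mathrm{dim}_{ub}(Y)\le \mathrm{IV}(M)<2$. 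Under (B1) I would run the proof of Theorem \ref{nonnegativesec} along a sequence $r_i\to\infty$ with $f(r_i)\to 1$; in this very low dimensional regime the conic hypothesis should be unnecessary, and one again obtains $\mathrm{dim}_{ub}(Y)\le 1<2$ for the (unique) asymptotic cone.

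The second step upgrades $\mathrm{dim}_{ub}(Y)<2$ to the rigid conclusion. From $\mathrm{dim}_{ub}(Y)<2$ we get $\mathrm{dim}_{H}(Y)<2$, so the topological (covering) dimension of $Y$ is at most $1$. On the other hand $Y$ is a proper geodesic space, being an asymptotic cone, and as a Ricci limit space (indeed an $\mathrm{RCD}(0,n)$ space) it is non-branching. A non-branching proper geodesic space of topological dimension at most $1$ can contain no point of degree $\ge 3$, since two minimizing geodesics sharing an initial segment and then parting at such a point would violate non-branching; hence $Y$ is a connected $1$-manifold with boundary, and since $M$ is noncompact, $Y$ is isometric to $[0,\infty)$ or $\mathbb{R}$. (When $Y$ is a priori known to be a metric cone one may instead argue via its link: $\mathrm{dim}_{ub}(Y)<2$ forces the link to be a finite set of points, and a line-splitting argument then rules out more than two of them.)

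For the dichotomy, suppose first that $Y\cong\mathbb{R}$. Then the unique asymptotic cone of $M$ splits off an $\mathbb{R}$-factor, so Theorem A of \cite{yezhucrelle} forces $\mathrm{IV}(M)=\mathrm{SV}(M)=1$ or $\mathrm{IV}(M)\ge 2$; the latter is excluded by (B1)/(B2), so $M$ has linear volume growth. By Theorem \ref{Sormanisublineardiamgrowth} either $M\cong\mathbb{R}\times N$ with $N$ compact, or $M$ has sublinear diameter growth; but in the latter case Sormani's description of the asymptotic cone in \cite{Sormani2000} would give $Y\cong[0,\infty)$, a contradiction, so alternative (1) holds. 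Suppose instead $Y\cong[0,\infty)$; then $M$ is not a metric product $\mathbb{R}\times N$ (whose asymptotic cone is $\mathbb{R}$), and I would invoke the extension of Sormani's sublinear diameter growth theorem proved in the body of the paper --- valid for open manifolds with $\mathrm{Ric}\ge 0$, $\mathrm{IV}(M)<2$, and unique asymptotic cone $[0,\infty)$ --- to conclude that $M$ has sublinear diameter growth. In either case the conclusion of Theorem \ref{Sormanisublineardiamgrowth} holds. Finite generation of $\pi_1(M)$ is then immediate when $M\cong\mathbb{R}\times N$ and follows from \cite{Sormanifinitelygenerated} when $M$ has sublinear diameter growth; and since sublinear diameter growth precludes nonconstant polynomial growth harmonic functions by Kasue \cite{polynomialharmonicrigidity}, the existence of such a function rules out alternative (2), leaving $M\cong\mathbb{R}\times N$.

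The main obstacle is the passage from the single bound $\mathrm{dim}_{ub}(Y)\le \mathrm{IV}(M)<2$ furnished by Theorem \ref{nonnegativesec} to the full rigidity $Y\cong[0,\infty)$ or $\mathbb{R}$: one must verify that the argument of Theorem \ref{nonnegativesec} really does yield $\mathrm{dim}_{ub}(Y)\le 1$ under (B1) with no conic hypothesis, and that non-branching is genuinely available for the (possibly collapsed) Ricci limit $Y$. A second delicate point is the ray case: Theorem \ref{Sormanisublineardiamgrowth} cannot be applied there as stated --- the rotational paraboloid, for instance, has volume growth order $\tfrac32$ rather than linear --- so the sublinear diameter growth conclusion must come from the paper's own extension of Sormani's theorem to the relaxed volume growth regime.
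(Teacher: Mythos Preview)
Your approach for case (B2) is correct and matches the paper: the unique-cone hypothesis makes $M$ conic at infinity, Theorem~\ref{nonnegativesec} gives $\mathrm{dim}_{ub}(Y)<2$, and a metric cone of upper box dimension $<2$ has finite cross-section, forcing $(Y,y)$ to be $(\mathbb{R},0)$ or $([0,\infty),0)$.

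For (B1), however, there is a genuine gap. You propose to ``run the proof of Theorem~\ref{nonnegativesec}'' without the conic hypothesis, trusting that it becomes unnecessary in the low-dimensional regime, but you give no mechanism for this, and you yourself flag it as an obstacle. The conic structure is used essentially in Proposition~\ref{mainestimate} (to produce many disjoint geodesic tubes whose $\nu$-measures are bounded below via Brunn--Minkowski), and it is not clear how to dispense with it. The paper takes a different route that avoids the dimension bound entirely: Theorem~\ref{keythm!}, which does \emph{not} need the conic hypothesis, yields a renormalized limit measure $\nu$ on the unique cone $Y$ with $\nu(B_R(y))\le R$ for $R\ge 10$. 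Thus $(Y,y,\nu)$ is an $\mathrm{RCD}(0,n)$ space of linear volume growth, and Theorem~1.3 of \cite{lineargrowthonrcd} shows that the asymptotic cone of $Y$ is $(\mathbb{R},0)$ or $([0,\infty),0)$. Since any asymptotic cone of $Y$ is again an asymptotic cone of $M$, uniqueness forces $Y$ itself to be one of these two spaces.

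Your dichotomy step is also more involved than necessary, and in the ray case it relies on a result that is not actually available. Once $(Y,y)$ is known to be $(\mathbb{R},0)$ or $([0,\infty),0)$, the conclusion of Theorem~\ref{Sormanisublineardiamgrowth} follows from two direct equivalences noted just before the paper's proof: $M$ has $(\mathbb{R},0)$ as its unique asymptotic cone iff $M\cong\mathbb{R}\times N$ with $N$ compact (Proposition~3.3 of \cite{yezhucrelle}); and $M$ has $([0,\infty),0)$ as its unique asymptotic cone iff the sublinear diameter growth condition \eqref{sublinear} holds, essentially by the definition of pointed Gromov--Hausdorff convergence. There is no need to pass through linear volume growth in the line case, and in the ray case no ``extension of Sormani's theorem'' is required---the sublinear diameter growth is literally equivalent to the asymptotic-cone statement you have already established. (The extensions that do appear later in the paper, Theorem~\ref{corresponding} and Corollary~\ref{allscales}, require an additional non-collapsing hypothesis not assumed in Corollary~\ref{coroBofA}, so they could not be invoked here anyway.)
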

\begin{rem}
As recently noted in \cite{zhouzhu} (cf. \cite{dimitripanzhu}), the linear growth condition exhibits rigidity similar to the Euclidean volume growth case: if $$\liminf\limits_{R\to\infty}\frac{\V (B_R(p))}{R}<C<\infty,$$ then in fact $\lim\limits_{R\to\infty}\frac{\V (B_R(p))}{R}$ exists. Conversely, even in the case of $\mathrm{sec}_{M^n}\geq 0$, it is possible that $\mathrm{IV}(M)=1$ while $\mathrm{SV}(M)=n$; see Theorem \ref{example}.
\end{rem}

We note that the condition $\mathrm{IV}(M)<2$ means that there exist an $s<2$ and a sequence $R_i\to\infty$ such that $\mathrm{vol}(B_{R_i}(p))\leq R^{s}_{i}$.  Theorem \ref{nonnegativesec} (assume further that $M$ is conic at infinity)  then gives a 1-dimensional asymptotic cone $(Y,y)$ of $M$. We point out that it is not at all clear from the proof of Theorem \ref{nonnegativesec} whether this $Y$ arises as a subsequential limit of $(R_i^{-1}M,p)$. Therefore, even if we assume $\mathrm{SV}(M)<2$ in Theorem \ref{nonnegativesec}, we are still unable to prove that every asymptotic cone of $M$ is 1-dimensional.

Our next theorem shows that 1-dimensional asymptotic cones are indeed  obtained when we blow down those scales $R_i$ with  volume order $<2$, provided that the volume of 1-balls  does not collapse too rapidly:  
 \begin{mainthm} \label{corresponding}
Let $0<\alpha\leq 1$ and let $M^n$ be an open manifold  with $\mathrm{Ric}_M\geq 0$. Suppose there exists a sequence $R_i\to\infty$ such that
 \begin{equation}\label{volupperbound}
 \frac{\V (B_{R_i}(p))}{R_i^{1+\alpha}}  \to 0 
 \end{equation}
and for some constant $c>0$ we have
 \begin{equation}\label{volcollapsingrate1}
 \V (B_1(x))\geq \frac{c}{(d(p,x))^{1-\alpha}}, \forall x\in \partial  B_{R_i}(p).
 \end{equation} 
 Then any subsequential limit of $(R_i^{-1}M,p)$ is either $([0,\infty),a)$ for some $a\geq 0$, or $(\mathbb{R},0)$.
 \end{mainthm}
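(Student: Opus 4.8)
The plan is to pass to a Gromov--Hausdorff convergent subsequence and show that its limit is $1$-dimensional; once that is known, the structure theory of (non-branching) Cheeger--Colding limit spaces forces the limit to be a $1$-dimensional Riemannian manifold, and since it is connected and noncompact (by Yau's theorem $M$ has at least linear volume growth, so every blow-down is unbounded), it must be isometric to $[0,\infty)$ or to $\mathbb{R}$, with the basepoint translated to $0$ in the latter case. So fix a subsequence (not relabelled) with $(R_i^{-1}M,p)\to(Y,y)$; here $Y$ is a limit space of manifolds with $\Ric\ge 0$.

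The heart of the matter is a finite-scale estimate: \emph{$\partial B_{R_i}(p)$ can be covered by $o(R_i)$ balls of radius $1$ in $M$}. First, Bishop--Gromov monotonicity of $R\mapsto\V(B_R(p))/R^n$ gives
$\V(B_{R_i+1}(p))-\V(B_{R_i-1}(p))\le\big[\big(\tfrac{R_i+1}{R_i-1}\big)^{n}-1\big]\V(B_{R_i+1}(p))\le\tfrac{C(n)}{R_i}\V(B_{R_i}(p))$,
which by \eqref{volupperbound} is $o(R_i^{\alpha})$. Next, if $\{x_1,\dots,x_{N_i}\}\subset\partial B_{R_i}(p)$ is a maximal $1$-separated set, the balls $B_{1/2}(x_j)$ are pairwise disjoint and contained in the annulus $B_{R_i+1}(p)\setminus B_{R_i-1}(p)$, and by \eqref{volcollapsingrate1} together with Bishop--Gromov, $\V(B_{1/2}(x_j))\ge 2^{-n}\V(B_1(x_j))\ge 2^{-n}c\,R_i^{\alpha-1}$. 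Summing over $j$ forces $N_i\le 2^{n}c^{-1}R_i^{1-\alpha}\cdot o(R_i^{\alpha})=o(R_i)$; and a maximal $1$-separated set is a $1$-net, so the $B_1(x_j)$ cover $\partial B_{R_i}(p)$.

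Now I would transfer this to the limit. Rescaling by $R_i^{-1}$, the set $\partial B_{R_i}(p)=\partial B_1^{R_i^{-1}M}(p)$ is covered by $N_i=o(R_i)$ balls of radius $R_i^{-1}$; realizing the convergence inside a common metric space and passing to a further subsequence, these sets converge in the Hausdorff sense to a compact $Z_\infty\subseteq\partial B_1^Y(y)$. The key point is that the covering count $o(R_i)$ beats the rescaling factor $R_i$: choosing the covering radius carefully against both $R_i^{-1}$ and the Gromov--Hausdorff error --- the one genuinely delicate step --- one gets $\mathcal H^1(Z_\infty)=0$, so in particular $Z_\infty$ is totally disconnected, i.e. of topological dimension $0$. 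Since $Y$ is a geodesic space, $Z_\infty$ separates $Y$ into the nonempty open sets $\{d(y,\cdot)<1\}$ and $\{d(y,\cdot)>1\}$ (the latter because $Y$ is unbounded). Finally, a limit space of manifolds with $\Ric\ge 0$ cannot be separated by a subset of topological dimension $\le\dim Y-2$; hence $\dim Y\le 1$, and, $Y$ being a nondegenerate geodesic space, $\dim Y=1$. Invoking the structure of $1$-dimensional limit spaces finishes the proof as explained above, and since the convergent subsequence was arbitrary, the conclusion holds for every subsequential limit.

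The step I expect to be the main obstacle is the transfer in the third paragraph: the covering estimate is sharp only at unit scale in $M$, which after rescaling is a scale tending to $0$ --- in general finer than the resolution of the Gromov--Hausdorff approximation --- so one must use the gain in the covering count ($o(R_i)$ rather than $O(R_i)$) to absorb the approximation error. A secondary point is that one needs standard structure results for low-dimensional Ricci limit spaces (that $1$-dimensional such spaces are $1$-manifolds, hence rays or lines when connected and noncompact, and that they are not disconnected by subsets of codimension $\ge 2$) to pass from $\dim Y\le 1$ to the explicit alternatives $([0,\infty),a)$ and $(\mathbb R,0)$.
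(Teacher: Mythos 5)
Your opening packing estimate is correct and is essentially the same counting step as the paper's Proposition \ref{diamgrowth}: a maximal $1$-separated (there, $10$-separated) subset of the sphere $\partial B_{R_i}(p)$ has cardinality $N_i=o(R_i)$, by comparing the volume lower bound $cR_i^{\alpha-1}$ for unit balls against hypothesis (\ref{volupperbound}). (The paper compares with the volume of the geodesic cone $S_{x_j}$ down to $p$ rather than with the annulus, but either works.) The divergence, and the gap, is in what you do with this count. The paper converts $N_i=o(R_i)$ into a \emph{sublinear diameter bound} for $B_{R_i+1}(p)\cap E_{R_i}$ via a chain argument, and for that it crucially needs that this set is \emph{connected} for large $R_i$ --- which requires first reducing to the one-ended case by the splitting theorem and then a Mayer--Vietoris/first-Betti-number argument (Proposition \ref{numberofbdcomponents}). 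Without connectedness, $N_i=o(R_i)$ says nothing about the diameter of the sphere: two points at mutual distance $R_i$ already form a $1$-separated set of cardinality $2$. Your proposal never uses connectedness, and your substitute --- passing the covering to the limit to get $\mathcal H^1(Z_\infty)=0$ --- does not work. The covering is by $N_i$ sets of radius $1$ in $M$, i.e.\ radius $R_i^{-1}$ in $R_i^{-1}M$, but a Gromov--Hausdorff approximation onto $Y$ has error $\epsilon_i$ that is merely $o(1)$ and in general $\gg R_i^{-1}$; the images are therefore only sets of diameter $\approx\epsilon_i$, and the resulting bound on the $1$-content of $Z_\infty$ is $N_i\epsilon_i$, which need not tend to $0$ (e.g.\ $N_i=R_i/\log R_i$, $\epsilon_i=R_i^{-1/2}$). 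The "gain" from $O(R_i)$ to $o(R_i)$ cannot absorb this, because $\epsilon_i$ is determined by the convergence and is not at your disposal; no choice of covering radius repairs it. Metric information at scales below $\epsilon_i$ simply does not survive Gromov--Hausdorff convergence, which is exactly why the paper works with a diameter bound (a scale-$R_i$ statement) rather than a measure bound at unit scale.

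There is a second unsupported step even granting $\mathcal H^1(Z_\infty)=0$: the assertion that a Ricci limit space cannot be separated by a subset of topological dimension $\le\dim Y-2$. No such theorem is available for collapsed Ricci limit spaces (where even the meaning of $\dim Y$ is delicate), and the paper does not argue this way; instead, after the diameter estimate it runs a trichotomy via the Cheeger--Colding splitting theorem and rules out the non-one-dimensional cases using the geodesic nonbranching property of Colding--Naber/Deng (Lemma \ref{raycriterion} and the endgame of the proof of Theorem \ref{corresponding}), which also accounts for the alternative $([0,\infty),a)$ with $a>0$ that your final classification glosses over. In summary: your step one is sound, but the bridge from the packing count to the structure of the limit is missing, and the route you sketch for it cannot be completed; the needed ingredients are the connectedness of the sphere-pieces of the single end and the nonbranching argument in the limit.
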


\begin{rem}
If we further assume that the asymptotic cone of $M$ is unique in Theorem \ref{corresponding}, then it can only be either $([0,\infty),0)$ or $(\mathbb{R},0)$ (cf. Claim \ref{rescale1dimcone}), hence the conclusions of Corollary \ref{coroBofA} hold. 
\end{rem}

\begin{maincor}\label{allscales}

 Let $0<\alpha\leq 1$ and let $M^n$ be an  open manifold with $\mathrm{Ric}_M\geq 0$. Assume that
\begin{equation} \label{allscalesvolupperbound}
\lim\limits_{R\to\infty}\frac{\V(B_R(p))}{R^{1+\alpha}}=0
\end{equation} 
and that for some constant $c>0$  we have
 \begin{equation}\label{volcollapsingrate2}
 \V (B_1(x))\geq \frac{c}{(d(p,x))^{1-\alpha}}, \forall x\in   M\backslash B_{1}(p),
 \end{equation} 
 Then the conclusions of Corollary \ref{coroBofA} hold.
\end{maincor}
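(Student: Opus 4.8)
The plan is to promote Theorem \ref{corresponding}: under the \emph{global} hypotheses \eqref{allscalesvolupperbound} and \eqref{volcollapsingrate2} one may feed \emph{every} blow-down sequence into that theorem, and a self-similarity argument will then force the asymptotic cone of $M$ to be unique, at which point the conclusion is exactly the Remark following Theorem \ref{corresponding}. First, let $R_i\to\infty$ be arbitrary. Then \eqref{allscalesvolupperbound} gives $\V(B_{R_i}(p))/R_i^{1+\alpha}\to 0$, which is \eqref{volupperbound}, and since \eqref{volcollapsingrate2} holds for every $x\in M\setminus B_1(p)$ it holds in particular for $x\in\partial B_{R_i}(p)$ once $R_i>1$, which is \eqref{volcollapsingrate1}. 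Hence Theorem \ref{corresponding} applies to $(R_i)$, and we conclude that \emph{every} asymptotic cone $(Y,y)$ of $M$ is, as a pointed metric space, either $([0,\infty),a)$ for some $a=a(Y)\ge 0$, or $(\mathbb{R},0)$; in particular every asymptotic cone of $M$ is one-dimensional.

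Next I would prove that the asymptotic cone is unique. By the usual self-similarity of asymptotic cones, for any $\lambda>0$ the blow-down of $M$ along $(\lambda R_i)$ has subsequential limits equal to the $\lambda^{-1}$-rescalings of those along $(R_i)$, and these are again asymptotic cones of $M$; since $\lambda^{-1}([0,\infty),a)\cong([0,\infty),\lambda^{-1}a)$ as pointed spaces, the existence of an asymptotic cone $([0,\infty),a)$ with $a>0$ would produce asymptotic cones $([0,\infty),b)$ for all $b>0$, and then, since the set of asymptotic cones of $M$ is closed in the pointed Gromov-Hausdorff topology while $([0,\infty),b)\to(\mathbb{R},0)$ as $b\to\infty$, also the asymptotic cone $(\mathbb{R},0)$. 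So there are two cases. In case (i), every asymptotic cone of $M$ equals $([0,\infty),0)$; then the asymptotic cone is unique and equal to $([0,\infty),0)$, so by the Remark following Theorem \ref{corresponding} (cf. Claim \ref{rescale1dimcone}) the conclusions of Corollary \ref{coroBofA} hold. In case (ii), $(\mathbb{R},0)$ is an asymptotic cone of $M$; then I claim $M=\mathbb{R}\times N$ with $N$ compact. To see this, one blows down minimizing geodesics whose rescaled endpoints converge to $\pm\infty$ along the limiting line, and after re-centering obtains a geodesic line in $M$; the Cheeger-Gromoll splitting theorem then gives $M=\mathbb{R}\times N$ with $\Ric_N\ge 0$, and if $N$ were noncompact Yau's linear lower bound would give $\V(B_R(p))\gtrsim R^2$, contradicting \eqref{allscalesvolupperbound} since $\alpha\le 1$; hence $N$ is compact and all the conclusions of Corollary \ref{coroBofA} hold directly.

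The main obstacle is case (ii), and precisely the step from ``$(\mathbb{R},0)$ is an asymptotic cone'' to ``$M$ contains a line'': an asymptotic cone that splits off $\mathbb{R}$ need not force the underlying manifold to split, so one must use the full one-dimensional structure established in the first step --- that \emph{every} asymptotic cone is a ray or a line --- to prevent the blown-down minimizing geodesics from drifting away from a fixed ball, and this is where the collapsing hypothesis \eqref{volcollapsingrate2} is genuinely used. A cleaner alternative that avoids case (ii) altogether is to argue, in the spirit of Corollary \ref{coroBofA}, that \eqref{volcollapsingrate2} together with Bishop-Gromov volume comparison yields $\V(B_R(p))\gtrsim \sum_{r\le R}\mathrm{diam}(\partial B_r(p))\,r^{\alpha-1}$, so that \eqref{allscalesvolupperbound} forces $\mathrm{diam}(\partial B_R(p))/R\to 0$ outside a set of $r^{\alpha}$-weighted density zero, and then to feed this diameter estimate into Sormani's dichotomy to obtain the conclusions of Corollary \ref{coroBofA}.
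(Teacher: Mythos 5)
Your first step (feeding an arbitrary blow-down sequence into Theorem \ref{corresponding}, so that every asymptotic cone is $([0,\infty),a)$ or $(\mathbb{R},0)$) is correct, your self-similarity observation is correct, and case (i) is fine. The genuine gap is exactly where you locate it: in case (ii) you never prove that ``$(\mathbb{R},0)$ is an asymptotic cone'' produces a line in $M$, and the mechanism you gesture at --- that the collapsing hypothesis \eqref{volcollapsingrate2} ``prevents the blown-down minimizing geodesics from drifting away from a fixed ball'' --- is not the right one: \eqref{volcollapsingrate2} enters only through Theorem \ref{corresponding} and gives no pointwise control on where a long minimizing geodesic sits relative to $p$. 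Since, as you note, a cone splitting off $\mathbb{R}$ does not force the manifold to split, the proof is incomplete at its decisive step. (The ``cleaner alternative'' in your last sentence is too sketchy to serve as a substitute.)

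The gap is closable, and the paper's Claim \ref{rescale1dimcone} shows how; the correct tool is an intermediate rescaling, not the volume hypothesis. Pick $z_i,w_i\in r_i^{-1}M$ converging to the two endpoints of a segment of length $2a$ through the basepoint of the limit (this works for $([0,\infty),a)$ with $a>0$ directly, so one can even skip your self-similarity reduction), let $h_i$ be minimizing geodesics joining them, and set $d_i=d(p,\mathrm{Im}(h_i))$. Because $h_i$ converges to the unique limit segment, which contains the basepoint, $d_i=o(r_i)$. If $d_i$ stays bounded, $h_i$ subconverges to a line in $M$, Cheeger--Gromoll applies, and your compactness argument for the factor $N$ finishes that case. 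If $d_i\to\infty$, blow down at the scale $d_i$ instead: the $h_i$ become lines in the limit at distance exactly $1$ from the basepoint, so this asymptotic cone splits as $\mathbb{R}\times Z$ with $Z$ not a point --- contradicting the one-dimensionality of \emph{every} asymptotic cone established in your first step. With this dichotomy on $d_i$ inserted, your argument goes through and is essentially the paper's; the only residual difference is that the paper concludes uniqueness of the asymptotic cone via connectedness of the set of asymptotic cones in pointed Gromov--Hausdorff distance, which your case split replaces.
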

\begin{rem}
It has been asked in \cite{dimitripanzhu} whether $\mathrm{SV}(M)<2$ implies that $\pi_1(M)$ is finitely generated.  Corollary \ref{coroBofA} and \ref{allscales} provide partial affirmative answers to this question.
 \end{rem}

Note that when $\alpha=1$, Corollary \ref{allscales} also follows directly from Corollary 3.3 of Shen-Wei \cite{shenwei93} and  Lemma 2.7 of Huang \cite{Huang24}. An advantage of our approach is that it ensures that the asymptotic cones obtained by blowing down those scales $R_i$ with small volume (in the sense of (\ref{volupperbound})) are indeed 1-dimensional, namely Theorem \ref{corresponding}.    The method we used here also allow us to present a more concise proof of Sormani's Thoerem \ref{Sormanisublineardiamgrowth}. We put it in the appendix.

Motivated by Theorem \ref{corresponding} and Corollary \ref{allscales}, we propose the following conjecture:
\begin{conj}
Let $M^n$ be an  open manifold with $\mathrm{Ric}_M\geq 0$ and $$\inf_{x\in M}\V B_1(x)>0.$$ 

(1) If $\liminf_{R\to\infty} \frac{\V(B_R(p))}{R^k}=0$, then there exist an asymptotic cone of $M$ with dimension  $<k$.

(2)  If $\limsup_{R\to\infty} \frac{\V(B_R(p))}{R^k}=0$, then every asymptotic cone of $M$ has dimension   $<k$.
\end{conj}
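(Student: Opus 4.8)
The plan is to read the two statements as the all-$k$ extension of the paper's two blow-down theorems: part (1) is the analogue of Theorem \ref{nonnegativesec} (\emph{some} asymptotic cone has small dimension), with the conic-at-infinity hypothesis traded for the uniform noncollapsing $\inf_x\V B_1(x)>0$, while part (2) is its ``$\limsup$'' strengthening asserting the bound for \emph{every} cone; the cases $k=2$ are precisely Theorem \ref{corresponding} and Corollary \ref{allscales}. I would first reduce everything to a single dimension estimate for a blow-down limit. Fix $R_i\to\infty$ (for (1), realizing the $\liminf$, so that $\epsilon_i:=\V(B_{R_i}(p))/R_i^{k}\to 0$; for (2), an arbitrary sequence, along which $\epsilon_i\to 0$ automatically), and pass to a pointed measured Gromov--Hausdorff limit $(R_i^{-1}M,p,\mu_i)\to(Y,y,\nu)$ with $\mu_i$ the volume renormalized so that $\mu_i(B_1)=1$. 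By Cheeger--Colding, $\nu$ inherits the Bishop--Gromov monotonicity of $M$, and the goal becomes $\mathrm{dim}_{ub}(Y)\le k$ (for one $Y$ in (1), for all $Y$ in (2)); the \emph{strict} inequality $<k$ is then recovered exactly because the hypothesis forces $\epsilon_i\to 0$ rather than mere boundedness.

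The engine that brings the \emph{absolute} volume smallness into play is the noncollapsing, through a two-scale covering estimate. Writing $v_0=\inf_x\V B_1(x)>0$, a maximal $1$-separated set in $B_{R_i}(p)$ gives disjoint half-balls of volume $\gtrsim v_0$ (by Bishop--Gromov from unit scale) inside $B_{2R_i}(p)$, so the number of unit balls needed to cover $B_{R_i}(p)$ is at most $\V(B_{2R_i}(p))/v_0\lesssim \epsilon_i R_i^{k}/v_0$. Through the rescaling $g_i=R_i^{-2}g$ this says that the unit ball of $Y$ is covered, \emph{at the single scale} $\epsilon=1/R_i$, by $\lesssim \epsilon_i\,\epsilon^{-k}/v_0$ balls, so $\log N_Y(\epsilon)/\log(1/\epsilon)\le k+\log(\epsilon_i/v_0)/\log R_i\to k$. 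This is the step where $\V(B_R)/R^k\to0$ and $\inf_x\V B_1>0$ combine to detect the exponent $k$, something scale-invariant Bishop--Gromov packing alone cannot do: there the volume factor cancels and one only ever gets $\mathrm{dim}_{ub}(Y)\le n$.

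Under conic-at-infinity the estimate already closes, which is how I expect part (1) to be proved in that case. When $Y=C(X)$ is a metric cone, self-similarity makes box-counting a clean power, $N_Y(\epsilon)\asymp \epsilon^{-d}$ with $d=\mathrm{dim}_{ub}(Y)$, so the single scale $\epsilon=1/R_i$ already \emph{pins down} $d$: from $R_i^{\,d}\lesssim \epsilon_i R_i^{k}$ one gets $R_i^{\,d-k}\lesssim\epsilon_i\to 0$, forcing $d<k$. (To track the cross-section directly one applies the same count to the thin annulus $\{\,|d(p,\cdot)-R_i|<1\,\}$, choosing $R_i$ by coarea so that $\V(\partial B_{R_i})\lesssim R_i^{\,k-1}$, which yields $\mathrm{dim}_{ub}(X)\le k-1$ and hence $\mathrm{dim}_{ub}(Y)=1+\mathrm{dim}_{ub}(X)\le k$, the noncollapsing $v_0>0$ preventing $X$ from degenerating.) I would keep the rotational paraboloid in mind throughout: its volume order $3/2$ strictly exceeds its cone dimension $1$, so only the upper bound $\mathrm{dim}(Y)\le(\text{volume order})$ can be true, and this is exactly the direction the two-scale count produces.

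The hard part, and the reason the statement remains conjectural, is the \emph{scale mismatch} that appears once the cone structure is dropped. The noncollapsing gives sharp control of $N_Y(\epsilon)$ only at the one scale $\epsilon\sim1/R_i$ attached to the blow-down, whereas $\mathrm{dim}_{ub}(Y)$ is a $\limsup$ over all $\epsilon\to0$; for a fixed small $\epsilon$ one must send $i\to\infty$, at which point the covering radius $\epsilon R_i\to\infty$, the unit-scale noncollapsing no longer bites, and one falls back to the useless bound $\le n$. In the conic case self-similarity bridges this gap for free (box-counting is a pure power, so one scale determines all); in general there is no such mechanism. To overcome it I would try either (a) an interpolated intermediate-scale lower volume bound $\V(B_\rho(x))\gtrsim v_0\,\rho^{\beta}$ for $1\le\rho\le R_i$, feeding it into the packing of $B_{R_i}(p)$ by $\rho$-balls so that the exponent $k$ survives at every scale, or (b) passing to the essential (rectifiable) dimension of $(Y,\nu)$ in the sense of Colding--Naber, showing it is $<k$ from the volume decay and then upgrading to the box or Hausdorff dimension. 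Pinning down the right exponent $\beta$, and controlling the upgrade (which can genuinely fail for collapsed limits), is where I expect the main difficulty to lie; part (2) is harder still, since demanding the bound for \emph{every} cone does not let one re-choose the favorable scale per cone, so the single-scale limitation persists cone by cone.
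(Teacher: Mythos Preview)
The statement you are addressing is explicitly labeled a \emph{Conjecture} in the paper (introduced with ``we propose the following conjecture''), and the paper provides no proof of it. There is therefore no paper proof to compare against. The paper's proven results cover only the conic-at-infinity case for arbitrary $k$ (Theorem~\ref{nonnegativesec}, with no noncollapsing assumed) and the case $k<2$ under a noncollapsing-type hypothesis (Theorem~\ref{corresponding} and Corollary~\ref{allscales}, where condition~(\ref{volcollapsingrate2}) with $\alpha=1$ is precisely $\inf_x\V B_1(x)>0$); the conjecture is stated as their natural common generalization, and your write-up correctly recognizes it as open and correctly isolates the scale-mismatch obstruction.

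One remark on method, since you do sketch an argument in the conic case: the paper does \emph{not} proceed there via the direct two-scale packing count you outline. Instead it first uses the slope lemma (Lemma~\ref{slope}) to locate a special blow-down sequence along which the renormalized limit measure satisfies $\nu(B_R(y))\le R^k$ for all $R\ge 10$ (Theorem~\ref{keythm!}), and then bounds $\mathrm{dim}_{ub}(Y)$ from above by $\mathrm{SV}(Y,\nu)$ via a Brunn--Minkowski argument exploiting the cone structure (Proposition~\ref{mainestimate}); absolute volumes and unit-scale noncollapsing never enter. For $k<2$ the paper instead argues geometrically through sublinear diameter growth of boundary components (Proposition~\ref{diamgrowth}) together with nonbranching of geodesics in the limit, again not through covering numbers. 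Whether your covering approach, the paper's measure-growth machinery, or some combination is the right route to the general conjecture is exactly the open content; the paper offers no strategy beyond the statement itself.
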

In the above conjecture, the notion of dimension may be understood in any reasonable sense, such as Hausdorff dimension, rectifiable dimension (\cite{coldingnaber}), upper box dimension, etc.

Finally, we present the following example of extremely    oscillatory volume growth:  
\begin{mainthm}\label{example}
For any integer $n\geq 2$, there exists an open $n$-manifold with nonnegative sectional curvature such that $\mathrm{IV}(M)=1$ and $\mathrm{SV}(M)=n$.
\end{mainthm}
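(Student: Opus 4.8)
The plan is to build $M$ as a warped-product-type metric of revolution, or more flexibly as a doubly warped construction over an interval, whose radial profile alternates between long ``thin cylindrical'' stretches (which force the volume near those scales to grow only linearly, giving $f(R)\approx 1$) and long ``Euclidean-cone'' stretches (where the cross-section blows up like a sphere of radius comparable to the distance, giving $f(R)\approx n$). Concretely, consider metrics $g = dr^2 + \varphi(r)^2 g_{S^{n-1}}$ on $[0,\infty)\times S^{n-1}$ (smoothed at the origin so that $M$ is a smooth manifold diffeomorphic to $\mathbb{R}^n$); one should instead use a rotationally symmetric $g = dr^2 + \varphi(r)^2 g_{S^{n-1}}$ where $\varphi$ is concave — but concavity alone only yields $\mathrm{IV}(M)\le 1$ when $\varphi$ is eventually constant, so the oscillation between $1$ and $n$ forces $\varphi$ to be sometimes nearly constant (flat cylinder) and sometimes nearly linear ($\varphi(r)\approx r$), while remaining concave and smooth. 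The $\mathrm{Ric}\ge 0$, indeed $\sec\ge 0$, condition for such a metric is exactly $\varphi''\le 0$ and $\varphi'^2\le 1$ (with equality issues at gluing points handled by smoothing), so the entire problem reduces to constructing a single smooth concave function $\varphi:[0,\infty)\to(0,\infty)$ with $0\le\varphi'\le 1$, $\varphi(0)=0$, $\varphi'(0)=1$, whose graph has infinitely many alternating nearly-flat and nearly-linear segments.

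The key steps, in order. First, record the volume formula $\V(B_R(p)) = \omega_{n-1}\int_0^R \varphi(r)^{n-1}\,dr$ where $\omega_{n-1}=\V(S^{n-1})$, and the curvature conditions $\sec = -\varphi''/\varphi$ on planes containing $\partial_r$ and $\sec=(1-\varphi'^2)/\varphi^2$ on the others, so $\sec_M\ge 0 \iff \varphi''\le 0$ and $|\varphi'|\le 1$. Second, choose a rapidly increasing sequence of radii $0 = r_0 < s_1 < r_1 < s_2 < r_2 < \cdots$ and define $\varphi$ so that on $[s_j, r_j]$ it is (approximately) constant equal to some height $h_j$, and on $[r_j, s_{j+1}]$ it increases with slope approaching $1$ up to a new height $h_{j+1} \gg h_j$; concavity forces each later slope to be $\le$ each earlier slope, which is fine since we only need slopes in $(0,1]$, but it does force the ``linear'' pieces to come with slopes that must be taken as a decreasing sequence $\lambda_j\uparrow$ is impossible — so instead take all the ``climbing'' pieces to have a fixed slope slightly less than $1$, say $1-\epsilon$, which is still allowed and still gives volume order $\to n$ if the pieces are long enough. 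Third, on a flat stretch $[s_j,r_j]$ with $r_j/s_j$ huge, one has $\V(B_{r_j}(p)) \approx \omega_{n-1} h_j^{n-1} r_j$, so $f(r_j) = \log\V(B_{r_j})/\log r_j \to 1$ as $r_j\to\infty$ provided $h_j$ grows subexponentially in $r_j$ (e.g. $h_j = \log r_j$): this gives $\mathrm{IV}(M)\le 1$, and combined with Yau's lower bound $\mathrm{IV}(M)=1$. Fourth, on a climbing stretch ending at $s_{j+1}$ with $\varphi(s_{j+1}) = h_{j+1}\approx (1-\epsilon)s_{j+1}$ when $s_{j+1}\gg r_j$ (so the additive constant from $r_j$ is negligible), one gets $\V(B_{s_{j+1}}(p)) \gtrsim \int_{r_j}^{s_{j+1}} ((1-\epsilon)(r-r_j))^{n-1}dr \approx \frac{(1-\epsilon)^{n-1}}{n} s_{j+1}^{n}$, so $f(s_{j+1}) \to n$; hence $\mathrm{SV}(M)= n$ (the Bishop bound gives $\le n$). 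Fifth and last, verify $\varphi$ can be chosen genuinely smooth and concave across all the corners by rounding each corner over a negligible length scale (small compared to the lengths of the adjacent segments, so the asymptotic volume estimates are unaffected), and that $\varphi'$ stays in $[0,1-\epsilon/2]$ throughout.

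The main obstacle is the tension, imposed by concavity, between returning to an almost-flat profile (slope $\approx 0$) after a long climbing segment and then climbing again: once $\varphi$ is concave its derivative is nonincreasing, so we cannot have slope $\approx 1$, then $\approx 0$, then $\approx 1$ again. The resolution — and the real content of the construction — is that we never need slope close to $1$ for the volume order to approach $n$; a \emph{fixed} slope $\lambda\in(0,1)$ on arbitrarily long segments already produces $\V(B_R)\sim c_\lambda R^n$ hence $f(R)\to n$, and similarly slope \emph{exactly} $0$ is not needed for $f(R)\to 1$: a slope that is positive but decaying to $0$ fast enough (so that $\varphi$ stays bounded by $\log r$ on the flat stretches) suffices, and a nonincreasing derivative that oscillates between ``tiny'' and ``$\lambda$'' infinitely often is perfectly consistent with concavity — we just need the tiny values and the $\lambda$-plateaus to alternate while the derivative, viewed globally, is a nonincreasing step-like function that does not actually return to large values but rather the plateaus at height $\lambda$ occur at the \emph{start} and we taper down. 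Getting the bookkeeping right so that $\varphi'$ is globally nonincreasing yet the graph still has infinitely many long near-horizontal and infinitely many long $\lambda$-sloped pieces requires choosing the plateau at slope $\lambda$ to shrink in slope very slowly (e.g. $\lambda_j = 1/2 + 1/(j+2)$, decreasing) while the ``flat'' interludes have slope dropping to, say, $\lambda_j \cdot 2^{-j}$; then on the flat interludes $\varphi$ increases by at most $\lambda_j 2^{-j}\cdot(\text{length})$, which we keep $\le \log r$ by capping the length, and this is where the delicate choice of the sequence $r_j$ enters. Once that sequence is fixed, all the volume estimates are routine integrations.
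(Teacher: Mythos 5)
Your overall framework is the same as the paper's: a rotationally symmetric metric $dt^2+\varphi^2(t)\,ds^2_{n-1}$ with a concave profile $\varphi$, $0\le\varphi'\le 1$, alternating between long slow-growth stretches (volume order near $1$) and long fast-growth stretches (volume order near $n$), with smoothness at the corners handled by a smoothing/rounding step. You also correctly isolate the one real difficulty: concavity makes $\varphi'$ nonincreasing, so the profile cannot return to a large slope after a near-flat interlude. But your proposed resolution of that difficulty is internally inconsistent, and as written the construction fails. You ask for climbing pieces of slope $\lambda_j=\tfrac12+\tfrac1{j+2}\ge\tfrac12$ for \emph{every} $j$, separated by flat interludes on which the slope has dropped to $\lambda_j 2^{-j}$. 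At the start of the $(j+1)$-st climb the derivative would then have to jump from $\lambda_j2^{-j}$ back up to $\lambda_{j+1}>\tfrac12$, which contradicts $\varphi''\le 0$. The surrounding prose (``a nonincreasing derivative that oscillates between tiny and $\lambda$ infinitely often is perfectly consistent with concavity,'' versus ``the derivative \ldots does not actually return to large values'') asserts both horns of the contradiction without choosing one. There is no bookkeeping that makes a nonincreasing function take values $\ge\tfrac12$ on infinitely many intervals interlaced with intervals where it is $o(1)$.

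The correct resolution — and the actual content of the construction — is that the climbing pieces must themselves have slopes tending to $0$; what produces volume order near $n$ is not the slope but the size of $\varphi$ relative to $t$ at the far end of the piece. The paper takes the climbing pieces to be graphs of $t^{1-\frac{1}{l+1}}$ and the slow pieces to be graphs of $t^{\frac{1}{l+1}}$: both are concave, the derivative of $t^{1-\frac{1}{l+1}}$ at a point $c$ is $(1-\frac{1}{l+1})c^{-\frac{1}{l+1}}\to 0$ as $c\to\infty$, so by starting each new piece far enough out one can interpolate with a line segment while keeping the global derivative nonincreasing (this is the paper's Claim on connecting concave functions, made smooth via Ghomi's smoothing theorem); and $\int^{R} t^{(n-1)(1-\frac{1}{l+1})}dt\sim R^{(n-1)(1-\frac{1}{l+1})+1}$ still has order tending to $n$. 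An equivalent repair of your version: keep linear climbing pieces but accept that their slopes $\delta_j$ are forced to decrease to $0$ (bounded by the terminal slope of the preceding flat interlude), and compensate by making the $j$-th climb super-exponentially long, so that $\delta_j^{\,n-1}s_{j+1}^{\,n}\ge s_{j+1}^{\,n-1/j}$, i.e.\ $s_{j+1}\ge \delta_j^{-j(n-1)}$. Either way, you must abandon the fixed (or bounded-below) slopes $\lambda_j$; until you do, the construction does not produce a concave profile and the proof is incomplete.
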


The paper is organized as follows. In Section 2, we prove Theorem \ref{nonnegativesec} and Corollary \ref{coroBofA}. The proof of Theorem \ref{nonnegativesec} involves two parts. In Theorem \ref{keythm!}, we bound the supremum of volume growth order of a special asymptotic cone (with a renormalized limit measure) of $M$ from above in terms of  $\mathrm{IV}(M)$.   For metric asymptotic cones with a renormalized limit measure, we then bound the infimum of volume growth order from below in terms of the upper box dimension.  This, combined with Theorem \ref{keythm!}, yields Theorem \ref{nonnegativesec}.

Section 3 is devoted to the proof of Theorem \ref{corresponding}.  We first deduce from given conditions that the extrinsic diameter of every connected component of  $\partial B_{R_i}(p)$  grows sublinearly in $R_i$ (Proposition \ref{diamgrowth}).  The arguments used in  this part are well-known (\cite{AG90},\cite{shenwei93}).   We then show that the possibility that $Y$ is not 1-dimensional can be rule out by Proposition \ref{diamgrowth}  and 
 the nonbranching property of geodesics in Ricci limit spaces established by Qin Deng in \cite{coldingnaber}, based on the celebrated work of Colding-Naber \cite{nonbranching}. 
  
  In section 4, we construct the manifold described in Theorem \ref{example}. We also show that if  $\V(B_R(p))$ is bounded from both below and above by a constant multiple of $R^k$, then the volume growth of any asymptotic cone of $M$ can be controlled. 

\textbf{Acknowledgments:} The author thanks Jiayin Pan for reading a preliminary version of this paper and for many valuable comments. The author thanks Hongzhi Huang for helpful discussions related to his paper \cite{Huang24}. The author thanks Xiaochun Rong for his encouragements.

\tableofcontents

\section{Volume growth and asymptotic cone}

In this Section, we prove Theorem \ref{nonnegativesec}. We recall that for any Ricci limit space $(Y,y)$ (that is, $(Y,y)$ is the pointed Gromov-Hausdorff limit of a sequence of complete manifolds $(M_i,p_i)$ with uniform Ricci curvature lower bound and the same dimension), by passing to a subsequence, we can define a renormalized limit measure $\nu$ on $Y$ (see Section 1 of \cite{CCI} for the contsruction of $\nu$). The measure $\nu$ is related to the volume measure on $M_i$ in the following way: if $q_i\in M_i$ converges to $q\in Y$, then 
$$\lim\limits_{i\to\infty}\frac{\mathrm{vol}(B_R(q_i))}{\mathrm{vol}(B_1(p_i))}= \nu (B_R(q)),  \forall R > 0.$$

Let $M^n$ be an open manifold with $\mathrm{Ric}\geq 0$. We denote by $\Omega$ the set of all $(Y,y,\nu)$, where $(Y,y)$ is an asymptotic cone of $M$ and $\nu$ is a renormalized limit measure on $Y$.

A key ingredient for proving Theorem \ref{nonnegativesec} is the following result:
\begin{mainthm} \label{keythm!}
Let $M^n$ be an open manifold with $\mathrm{Ric}_M\geq 0$.  Then there exists an asymptotic cone $(Y,y,\nu)$ of $M$ such that
$\nu (B_R(y))\leq  R^k$ for all $R\geq 10$, where $k=\mathrm{IV}(M)$. 
\end{mainthm}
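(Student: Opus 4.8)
The plan is to extract the desired asymptotic cone from a carefully chosen sequence of scales, exploiting the definition $\mathrm{IV}(M)=\liminf_{R\to\infty}f(R)$ together with the monotonicity properties of $\V(B_R(p))/R^n$ (Bishop-Gromov). First I would fix a base point $p$ and, using $\liminf_{R\to\infty}f(R)=k$, choose a sequence $r_i\to\infty$ along which $f(r_i)\to k$, i.e. $\V(B_{r_i}(p))=r_i^{f(r_i)}$ with $f(r_i)\to k$; one should in fact choose the $r_i$ a bit more cleverly so that $f(R)\geq k-\varepsilon_i$ for all $R$ in a large range below $r_i$ (this is possible precisely because $k$ is the $\liminf$). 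Passing to a subsequence, $(r_i^{-1}M,p)$ converges in the pointed Gromov-Hausdorff sense to an asymptotic cone $(Y,y)$, and after a further subsequence we obtain a renormalized limit measure $\nu$ on $Y$, normalized so that $\nu$ is the limit of $\V(B_R(q_i))/\V(B_1(p))$ after rescaling — here the relevant normalization is by $\V(B_{r_i}(p))$, since we want to compare $\nu(B_R(y))$ with the rescaled volume $\V(B_{Rr_i}(p))/\V(B_{r_i}(p))$.

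The heart of the matter is then the volume estimate. For fixed $R\geq 10$ and large $i$, the ball $B_R(y)$ in $r_i^{-1}M$ corresponds to $B_{Rr_i}(p)$ in $M$, so
\[
\nu(B_R(y)) \;=\; \lim_{i\to\infty}\frac{\V(B_{Rr_i}(p))}{\V(B_{r_i}(p))}.
\]
By Bishop-Gromov, $R\mapsto \V(B_{Rr_i}(p))/(Rr_i)^n$ is nonincreasing, which gives the crude bound $\V(B_{Rr_i}(p))\leq R^n \V(B_{r_i}(p))$ — not good enough. The improvement comes from using the volume order control on the intermediate scales: writing $\V(B_{Rr_i}(p)) = (Rr_i)^{f(Rr_i)}$ and $\V(B_{r_i}(p)) = r_i^{f(r_i)}$, I would like to say $f(Rr_i)$ is not much larger than $k$. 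But $f$ at the larger scale $Rr_i$ is genuinely not controlled by the $\liminf$; what is controlled is that $f\geq k-\varepsilon$ on scales between, say, $r_i^{1/2}$ and $r_i$. The right way to package this is via the monotone quantity: set $g(R)=\log\V(B_R(p))$, which by Bishop-Gromov satisfies that $g(R)-n\log R$ is nonincreasing, equivalently $g$ is "concave-like" in $\log R$. Then $f(R)=g(R)/\log R$, and I would use the choice of $r_i$ together with a convexity/averaging argument on $\log$-scale: because $g(r_i)/\log r_i \to k$ and $g$ has nonincreasing slope (in $\log R$) bounded by $n$, the slope of $g$ at scales just below $r_i$ must be close to $k$, which forces $g(Rr_i)-g(r_i)\leq (n\text{ or so})\cdot \log R$ but more importantly, combined with $g(r_i)\approx k\log r_i$ and the lower bound $g(r_i)-g(r_i/R)\geq (k-\varepsilon)\log R$ on the way up, pins down $g(Rr_i)-g(r_i)\leq k\log R + o(\log r_i)$. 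Taking $i\to\infty$ yields $\nu(B_R(y))\leq R^k$.

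The main obstacle I anticipate is exactly this last step: controlling the volume at the \emph{larger} scale $Rr_i$, since $R\geq 10$ is fixed but $Rr_i$ still tends to infinity, so a priori $f(Rr_i)$ could be as large as $\mathrm{SV}(M)$. The resolution must use the freedom in choosing the sequence $r_i$ — one picks $r_i$ to be (roughly) the scales realizing the $\liminf$, and then the concavity of $g$ in $\log R$ forces the volume \emph{above} $r_i$ to grow at most at rate $k$ up to scale $10 r_i$ and in fact up to any fixed multiple; quantitatively, if $g(R)/\log R \geq k$ for all $R\geq R_0$ (which one can arrange to hold "on average" near $r_i$) then concavity gives $g(Rr_i) \leq g(r_i) + k(\log(Rr_i)-\log r_i)\cdot(1+o(1))$. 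I would also need to double-check that the renormalized limit measure is comparable to (a definite multiple of) Hausdorff measure or at least that the normalization constant $\V(B_1(p))$ versus $\V(B_{r_i}(p))$ is handled consistently; but these are bookkeeping issues. The genuinely delicate point is the interplay between the $\liminf$ being attained only along $r_i$ and needing an upper bound at scale $Rr_i$, and Bishop-Gromov concavity in the logarithmic variable is what bridges this gap.
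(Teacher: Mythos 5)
Your overall architecture matches the paper's: blow down along well-chosen scales $r_i$, identify $\nu(B_R(y))$ with $\lim_i \V(B_{Rr_i}(p))/\V(B_{r_i}(p))$, and reduce the theorem to bounding this volume ratio by $R^k$. You also correctly isolate the real difficulty, namely controlling the volume at the larger scale $Rr_i$ when only the $\liminf$ of the growth order is known. But the mechanism you propose to resolve it is wrong. Bishop--Gromov gives that $g(s)-ns$ is nonincreasing, where $g(s)=\log\V(B_{e^s}(p))$; this is only the Lipschitz upper bound $g(s_2)-g(s_1)\le n(s_2-s_1)$ on the slope, and it is \emph{not} equivalent to, nor does it imply, concavity of $g$ in $\log R$. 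If $g$ were concave in $\log R$, its slope $g'(s)$ would be nonincreasing, hence convergent, and then $f(R)=g(s)/s$ would converge, forcing $\mathrm{IV}(M)=\mathrm{SV}(M)$ for every such manifold; Theorem \ref{example} of the paper (an open $n$-manifold with $\sec\ge 0$, $\mathrm{IV}(M)=1$ and $\mathrm{SV}(M)=n$) shows this is false. So the ``concavity in the logarithmic variable'' that your final step leans on is simply not available, and without it, knowing $g(s_i)/s_i\to k$ along some $s_i$ tells you nothing about the increment $g(s_i+\log R)-g(s_i)$: it could be as large as $n\log R$.

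The paper's replacement for your concavity step is Lemma \ref{slope}, a purely combinatorial slope lemma for nondecreasing functions: if $f(s_i)\le ks_i$ along some $s_i\to\infty$, then for every window length $l>1$ there exist (possibly entirely different) scales $r_i\to\infty$ with $f(r_i+t)-f(r_i)\le(k+l^{-1})t$ for all $t\in[1,l]$; otherwise one could chain together increments of slope $>k+l^{-1}$ starting from some fixed radius and contradict $f(s_i)\le ks_i$ for large $i$. Applied to $g$, this produces scales at which the volume ratio over any fixed window above $r_i$ is controlled, and a diagonal argument over the window lengths finishes the proof. Note that these $r_i$ need not be anywhere near the $\liminf$-realizing scales --- this is exactly why the paper later remarks that the good asymptotic cone need not arise from blowing down the scales where the volume is small. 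To repair your write-up, abandon the concavity claim and prove such a pigeonhole statement instead; the only property of $g$ it uses is monotonicity.
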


To prove Theorem \ref{keythm!}, we need the following slope lemma. The author has employed a similar slope lemma in \cite{yezhucrelle} to address the orbit growth of the fundamental group action. The idea has its origin in Gromov \cite{Gromov}.

\begin{lem} \label{slope} Let $f:[1,\infty)\rightarrow \mathbb{R}$ be a nondecreasing function. Assume that   $f(s_i)\leq  ks_i$ for some $k>0$ and a sequence $s_i\to\infty$. Then for any $l>1$, there exists a sequence $r_i\to\infty$ such that
$$f(r_i+t)-f(r_i)\leq (k+l^{-1})t, \,\forall t\in [1,l].  $$
\end{lem}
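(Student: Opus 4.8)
The plan is to argue by contradiction, using an averaging/pigeonhole argument over consecutive unit intervals. Suppose the conclusion fails for some fixed $l>1$. Then there is an $R_0$ such that for every $r\geq R_0$ there exists some $t=t(r)\in[1,l]$ with $f(r+t)-f(r)>(k+l^{-1})t$. The idea is that such a ``bad'' bound at scale $r$, because $f$ is nondecreasing, forces $f$ to grow at a definite rate on a whole interval of length at least $1$ starting near $r$; chaining these intervals together would make $f$ grow at rate strictly bigger than $k$, contradicting $f(s_i)\leq k s_i$ along $s_i\to\infty$.

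More precisely, I would first observe that since $f$ is nondecreasing and $t\le l$, the inequality $f(r+t)-f(r)>(k+l^{-1})t\ge (k+l^{-1})$ gives in particular $f(r+l)-f(r)\ge f(r+t)-f(r) > k+l^{-1}$ (after noting $t\geq 1$ so $(k+l^{-1})t \ge k+l^{-1}$; one uses monotonicity of $f$ to replace $r+t$ by $r+l$). Hence for every $r\ge R_0$ we have $f(r+l)-f(r) > k + l^{-1}$. Now iterate: choosing $r = R_0 + jl$ for $j=0,1,\dots,N-1$ and telescoping,
\begin{equation*}
f(R_0 + Nl) - f(R_0) = \sum_{j=0}^{N-1}\big(f(R_0+(j+1)l) - f(R_0+jl)\big) > N\big(k+l^{-1}\big) = Nkl\cdot\frac{1}{l} + \frac{N}{l}.
\end{equation*}
Rearranging, $f(R_0+Nl) > f(R_0) + Nkl + N/l$. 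Comparing with the hypothesis evaluated along $s_i = R_0 + N_i l$ (pick, for each large $i$, the integer $N_i$ with $s_i$ closest to an admissible value, or simply note $s_i\to\infty$ forces $s_i \ge R_0+N_il$ for suitable $N_i\to\infty$ with $R_0+N_il$ within $l$ of $s_i$, then use monotonicity again), we get $k s_i \ge f(s_i) \ge f(R_0+N_il) > f(R_0) + N_i k l + N_i/l$, i.e.\ $k(s_i - R_0 - N_i l) > f(R_0) - kR_0 + N_i/l$. Since $s_i - R_0 - N_i l \le l$ is bounded while $N_i/l \to\infty$, this is a contradiction for large $i$. This yields the existence of $r_i\to\infty$ with the desired slope bound on $[1,l]$ (taking the $r_i$ to be a subsequence of the points $R_0+jl$, or just any divergent sequence beyond $R_0$, since the bad inequality was assumed to hold for \emph{all} $r\ge R_0$ — wait, that is exactly the negation, so to produce the \emph{good} $r_i$ we instead conclude: the negation of the statement ``$\exists r_i\to\infty$ with the slope bound'' is precisely ``$\exists R_0$ such that the bad bound holds for all $r\ge R_0$'', which we have just shown is impossible).

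The one subtlety I want to be careful about is the logical structure: the conclusion ``there exists a sequence $r_i\to\infty$ such that for all $t\in[1,l]$, $f(r_i+t)-f(r_i)\le(k+l^{-1})t$'' fails exactly when there is a threshold $R_0$ beyond which \emph{every} $r$ admits a \emph{witness} $t(r)\in[1,l]$ violating the bound. The main (minor) obstacle is organizing the telescoping cleanly: a single violation at $r$ only controls $f$ on $[r,r+t(r)]$ with $t(r)$ possibly as small as $1$, so to chain violations I should step forward by the \emph{guaranteed} amount — and the clean move is the one above, namely to first upgrade each violation to the uniform statement $f(r+l)-f(r) > k+l^{-1}$ valid for all $r\ge R_0$ (using $t(r)\ge 1$, $(k+l^{-1})t(r)\ge k+l^{-1}$, and monotonicity to push the endpoint out to $r+l$), and only then telescope along the arithmetic progression $R_0, R_0+l, R_0+2l,\dots$. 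Everything else is elementary. I expect no real difficulty beyond this bookkeeping; the lemma is essentially a quantitative ``a nondecreasing function that is sublinear along a sequence cannot have slope uniformly bounded below by more than its average'' statement.
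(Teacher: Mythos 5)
Your overall strategy (contradiction, then chaining violations and telescoping against $f(s_i)\le ks_i$) is the same as the paper's, but the specific way you chain the violations contains a genuine gap that kills the argument. The paper steps forward by the actual witness: from $R_i$ it moves to $R_{i+1}=R_i+t_{R_i}$, so each increment satisfies $f(R_{i+1})-f(R_i)>(k+l^{-1})(R_{i+1}-R_i)$ and the telescoped sum exceeds $(k+l^{-1})$ times the \emph{total distance travelled}. You instead ``upgrade'' each violation to the fixed-step statement $f(r+l)-f(r)>k+l^{-1}$ and telescope along the arithmetic progression $R_0, R_0+l,\dots$. That upgrade is valid but far too lossy: over a distance $Nl$ it only yields growth $>N(k+l^{-1})$, i.e.\ an average slope of $(k+l^{-1})/l$, which for $l>1$ is in general \emph{smaller} than $k$ (e.g.\ $k=1$, $l=2$ gives slope $3/2<2\cdot 1$ needed... more to the point, $3/4<1$ per unit length). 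So the comparison with $ks_i$ produces no contradiction. The step where you write ``Rearranging, $f(R_0+Nl)>f(R_0)+Nkl+N/l$'' is an algebra slip that hides this: $N(k+l^{-1})=Nkl\cdot\tfrac1l+\tfrac{N}{l}=Nk+\tfrac{N}{l}$, not $Nkl+\tfrac{N}{l}$. With the correct value $Nk+N/l$, the inequality $ks_i>f(R_0)+N_ik+N_i/l$ with $s_i\approx R_0+N_il$ is perfectly consistent for all large $N_i$, since $kN_il$ dominates $kN_i+N_i/l$.

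The fix is exactly the move you flagged as a ``subtlety'' and then discarded: step forward by the witness $t(r)\in[1,l]$ itself rather than by $l$. Define $R_{j+1}=R_j+t_{R_j}$; then $1\le R_{j+1}-R_j\le l$ (so $R_j\to\infty$) and $f(R_j)-f(R_0)>(k+l^{-1})(R_j-R_0)$ by telescoping, which \emph{does} contradict $f(s_i)\le ks_i$ after locating each $s_i$ in some $(R_{\phi(i)},R_{\phi(i)+1}]$ and using monotonicity, as in the paper. As written, your proof does not establish the lemma.
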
	 
	\begin{proof}
Assume the conclusion fails. Then there exist an $l>1$ and an $N>1$ such that for any $r>N$, we have
$$f(r+t_r)-f(r)>(k+l^{-1})t_r $$
for some $t_r\in [1,l]$. Thus we can find $R_0=N+1, R_1,R_2,\cdots$ satisfying the following:

\noindent 1. $1\leq R_{i+1}-R_i\leq l, \forall i=0,1,2,\cdots .$

\noindent 2. $f(R_{i+1})-f(R_i)> (k+l^{-1})(R_{i+1}-R_i), \forall i=0,1,2,\cdots . $

So we have $f(R_j)-f(R_0)>(k+l^{-1})(R_j-R_0), \forall j\in \mathbb{N}_+$. On the other hand, for any $s_i$ there is a unique $\phi(i)$ such that $s_i\in (R_{\phi(i)},R_{\phi(i)+1}]$. Note that $\lim\limits_{i\to\infty}\phi(i)=\lim\limits_{i\to\infty} R_{\phi(i)}=\infty$. Since $f$ is nondecreasing, we have
\begin{align*}
k(R_{\phi(i)}+l)&\geq ks_i\geq f(s_i)\geq f(R_{\phi(i)})
>\\
& (k+l^{-1})(R_{\phi(i)}-R_0)+f(R_0) ,\, \forall i\in \mathbb{N}_+. 
\end{align*} 
That is $R_{\phi(i)}<kl^2+(kl+1)R_0-lf(R_0)$.  This leads to a contradiction as $i\to\infty$.
	\end{proof}

For a metric space $(X,d)$ and a point $p\in X$, we set  $$B^{X}_{R}(p)=\{x\in X \mid d(x,p)<R\}.$$
	
	\begin{proof}[Proof of Theorem \ref{keythm!}]
	Let $f(R)=\ln \mathrm{vol}(B_{e^R}(p))$ and let $k=\mathrm{IV}(M)$.   Then for each $i\in\mathbb{N}_+$,  there exists a sequence $R_{ij}\to\infty$ such that
		\begin{equation*}
		f(R_{ij}) \leq (k+\frac{1}{i})R_{ij}.
		\end{equation*} 
	By Lemma \ref{slope},  we can find another sequence $r_{ij}\to\infty$ such that
	$$f(r_{ij}+t)-f(r_{ij})\leq (k+\frac{2}{i})t,\forall t\in [1,i].$$
 That is $\frac{\mathrm{vol}(B_{e^{r_{ij}+t}}(p))}{\mathrm{vol}(B_{e^{r_{ij}}}(p))}\leq e^{(k+\frac{2}{i})t},\forall t\in [1,i]$. 	
	
	For each $i$, choose a $j_i$ such that $r_{ij_i}\to\infty$ as $i\to\infty$. Passing to a subsequence, we have the pointed Gromov-Hausdorff convergence $(e^{-r_{ij_i}}M,p)\to (Y,y,\nu)$. Denote $M_i=e^{-r_{ij_i}}M$, we have
	\begin{align*}
	\nu(B_{e^t}(y))=&\lim\limits_{i\to\infty}\frac{\mathrm{vol}(B^{M_i}_{e^t}(p))}{\mathrm{vol}(B^{M_i}_{1}(p))}
	\\=&\lim\limits_{i\to\infty} \frac{\mathrm{vol}(B^{M}_{e^{r_{ij_i}+t}}(p))}{\mathrm{vol}(B^{M}_{e^{r_{ij_i}}}(p))}\\
	\leq & (e^{t})^{k}, \forall t\geq 1.
	\end{align*}
	\end{proof}

We now proceed to the proof of Theorem \ref{nonnegativesec}.	
	
		The notion of dimension suitable for our approach  is the upper box dimension. Let $X$ be a metric space and let $A\subset X$ be a bounded subset. Given an $\epsilon>0$, the $\epsilon$-capacity of $A$ is defined as
		$$\mathrm{Cap}(A;\epsilon)=\sup\{k\mid  \text{there exist } x_1,\cdots,x_k\in A \text{ such that } d(x_i,x_j)\geq\epsilon, \forall i\neq j \} .$$
		The upper box dimension of $A$  is given by 
		$$\mathrm{dim}_{ub}(A)=\limsup\limits_{\epsilon\to0} - \frac{\ln \mathrm{Cap}(A;\epsilon) }
		{\ln \epsilon }.$$
	The upper box dimension of $X$ is defined as $\mathrm{dim}_{ub}(X)=\sup\limits_{A}\mathrm{dim}_{ub}(A)$, where $A$ run over all bounded subset of $X$.

		Since $M$ is conic at infinity, every element in $\Omega$ is of the form $(C(X),y,\nu)$, where $C(X)$ denotes the metric cone over a metric space $(X,d)$. We refer the readers to Chapter 3 of  \cite{BuBuIva} for basic facts about metric cones.   
		Let $o$ be the apex of $C(X)$. For any $R>0$,  we denote by 
		$$X_R=\{(x,R)\mid x\in X\},$$
		and equip it with the extrinsic metric induced from $C(X)$. Then $X_R=\partial B_R(o)$.
		Note that by the geometry of metric cones, $\mathrm{dim}_{ub}(X,d)=\mathrm{dim}_{ub}(X_R)$ for any $R>0$. Also, if $\mathrm{dim}_{ub}(C(X))=k+1$, then $\mathrm{dim}_{ub}(X)=k$ (here $k\geq 0$ may not be an integer).

Thoughtout this paper, geodesics are always assumed to be minimal and have constant speed.
Theorem \ref{nonnegativesec} follows from Theorem \ref{keythm!} and the following Proposition.
	\begin{prop}  \label{mainestimate}
Let $(C(X),y,\nu)\in \Omega$ and suppose $\mathrm{dim}_{ub}(C(X))=k+1$, then $\mathrm{SV}(C(X),\nu)\geq k+1$. That is, for some (hence any) $q\in C(X)$ and  for any $\alpha>0$, we can find a sequence $R_i\to\infty$, such that
$$\nu(B_{R_i}(q))\geq R_i^{k+1-\alpha}.$$
	\end{prop}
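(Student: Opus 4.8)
The plan is to exploit the conic structure: since $C(X)$ is a metric cone with apex $o$, the renormalized limit measure $\nu$ is (up to a constant) the cone measure built from a measure on the cross-section $X$, so volume growth of balls around the apex is controlled by a single power of the radius, and the exponent of that power should be forced to be at least $\dim_{ub}(C(X))$. First I would reduce to estimating $\nu(B_R(o))$ for the apex $o$, noting that for any other point $q$ the balls $B_R(q)$ and $B_{R+d(q,o)}(o)$ are comparable, so $\mathrm{SV}$ computed at $q$ equals that computed at $o$. Next I would recall (from Cheeger--Colding, as the excerpt allows) that $\nu$ on a Ricci limit that happens to be a metric cone is a conical measure: there is a measure $\mu$ on the cross-section $X_1$ such that $\nu(B_R(o)) = c\int_0^R \mu(X_1)\, t^{m-1}\,dt$ for the appropriate ``dimensional exponent,'' or more robustly, the scaling behaviour $\nu(B_{\lambda R}(o)) = \lambda^{\,\beta}\nu(B_R(o))$ holds for a fixed $\beta$ (the volume cone exponent). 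The crux is then to show $\beta \ge k+1$, equivalently that the cross-section $X_1 = \partial B_1(o)$, which has $\dim_{ub}(X_1) = k$, carries enough $\nu$-mass in small balls to force this growth.

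For that core step I would argue as follows. Fix $\epsilon>0$ small; by definition of $\mathrm{Cap}$ and $\dim_{ub}$, there is a bounded set $A\subset C(X)$ (which I can take to be a piece of an annulus $B_2(o)\setminus B_1(o)$, since $\dim_{ub}(C(X)) = \dim_{ub}(X_1)+1 = k+1$) and arbitrarily small $\epsilon$ with $\mathrm{Cap}(A;\epsilon) \ge \epsilon^{-(k+1-\alpha/2)}$. Take an $\epsilon$-separated family $x_1,\dots,x_N$ in $A$ with $N \ge \epsilon^{-(k+1-\alpha/2)}$; the balls $B_{\epsilon/2}(x_i)$ are disjoint and contained in $B_3(o)$. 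Now I would use a Bishop--Gromov type density estimate on the limit space $(C(X),\nu)$ — valid because $\nu$ is a renormalized limit measure of manifolds with $\mathrm{Ric}\ge 0$ — to get $\nu(B_{\epsilon/2}(x_i)) \ge c(n)\,(\epsilon/2)^{?}\,\nu(B_3(o))/3^{?}$; more precisely the Bishop--Gromov volume ratio monotonicity gives $\nu(B_r(x))/\nu(B_R(x)) \ge (r/R)^{?}$ only with the ambient ``dimension'' which is problematic, so instead I would run the comparison the other way: using the cone rescaling, rescale each $B_{\epsilon/2}(x_i)$ up to unit size. Summing disjointness, $\nu(B_3(o)) \ge \sum_i \nu(B_{\epsilon/2}(x_i))$, and then converting via the volume-cone scaling $\nu(B_{\epsilon/2}(x_i)) \gtrsim \epsilon^{\beta}\,m_0$ where $m_0 = \inf$ over a compact annulus of the local density (positive by the measure's nondegeneracy on cones), I obtain $\nu(B_3(o)) \gtrsim N\epsilon^{\beta} \gtrsim \epsilon^{\beta - (k+1-\alpha/2)}$, which if $\beta < k+1-\alpha$ blows up as $\epsilon\to 0$, a contradiction. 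Hence $\beta \ge k+1-\alpha$ for every $\alpha$, and taking $R_i\to\infty$ along the scaling $\nu(B_{R}(o)) = R^{\beta}\nu(B_1(o)) \ge R^{k+1-\alpha}$ for $R$ large gives the claim.

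I expect the main obstacle to be making the density lower bound $\nu(B_{\epsilon/2}(x_i)) \gtrsim \epsilon^{\beta}$ rigorous and uniform in $i$: the points $x_i$ sit at different radii in the annulus and near the cross-section, and one needs that the local behaviour of $\nu$ at scale $\epsilon$ around a point at distance $\sim 1$ from the apex is genuinely $\beta$-dimensional from below, not just from above. The clean way around this is to invoke the metric measure cone structure of $(C(X),\nu)$ directly: for a limit cone, $\nu = \nu_{X_1}\times (\text{radial } t^{\beta-1}dt)$ for a limit measure $\nu_{X_1}$ on the cross-section, and then $\nu(B_{\epsilon/2}(x))$ for $x$ at radius $r\in[1,2]$ is comparable to $\nu_{X_1}(B^{X_1}_{c\epsilon}(\bar x))\cdot \epsilon$. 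So the whole argument reduces to: an $\epsilon$-separated set in $X_1$ of size $\gtrsim \epsilon^{-k}$ (coming from $\dim_{ub}(X_1)=k$, upgraded to this lower bound along a sequence $\epsilon\to 0$ by definition of $\limsup$) produces disjoint balls $B^{X_1}_{\epsilon/2}(\bar x_i)$ with $\sum_i \nu_{X_1}(B^{X_1}_{\epsilon/2}(\bar x_i)) \le \nu_{X_1}(X_1) < \infty$, forcing $\min_i \nu_{X_1}(B^{X_1}_{\epsilon/2}(\bar x_i)) \lesssim \epsilon^{k}$; but then I want a \emph{lower} bound on a \emph{typical} such ball, which I get from Bishop--Gromov on $X_1$ viewed through the cone (the cross-section of a Ricci-nonnegative cone inherits a Ricci-type bound), yielding $\nu_{X_1}(B^{X_1}_{\epsilon}(\bar x)) \gtrsim \epsilon^{k}$ for most $\bar x$, and feeding this back gives $\nu(B_R(o)) \gtrsim R^{k+1-\alpha}$ along a sequence $R_i\to\infty$. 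The bookkeeping of ``most points'' versus ``all points'' and the passage between the $\limsup$ defining $\dim_{ub}$ and honest lower capacity bounds along a subsequence is the delicate part, but it is routine once the cone measure decomposition is in hand.
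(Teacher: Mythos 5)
Your proposal rests on two inputs that are not available in this setting, and both fail exactly where the exponent $k+1$ has to enter. First, you assume that because $(C(X),y)$ is a metric cone, the renormalized limit measure $\nu$ is a cone measure, i.e. $\nu=\nu_{X_1}\otimes t^{\beta-1}dt$ with exact scaling $\nu(B_{\lambda R}(o))=\lambda^{\beta}\nu(B_R(o))$. The hypothesis ``conic at infinity'' concerns only the metric (the base point need not even be a tip point), and ``metric cone'' does not imply ``volume cone'': the Cheeger--Colding implication goes the other way, and there is no general statement that a renormalized limit measure on a space that happens to be metrically a cone must itself be conical. Without this there is no single exponent $\beta$, and the reduction of the whole problem to ``$\beta\ge k+1$'' collapses. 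Second, even granting a conical $\nu$, your core step needs a \emph{lower} density bound $\nu(B_{\epsilon/2}(x_i))\gtrsim\epsilon^{\beta}$ (or $\nu_{X_1}(B_{\epsilon}(\bar x))\gtrsim\epsilon^{k}$) uniformly, or at least for most, of the $\epsilon$-separated points. As you note yourself, Bishop--Gromov runs the wrong way: monotonicity of $r\mapsto\nu(B_r(x))/r^{n}$ only yields $\nu(B_r(x))\ge (r/R)^{n}\nu(B_R(x))$ with the ambient exponent $n$, and with that exponent your contradiction $\epsilon^{\beta-(k+1-\alpha/2)}\to\infty$ becomes $\epsilon^{\,n-(k+1-\alpha/2)}$, which does not blow up when $k+1<n$ --- precisely the collapsed case the proposition is about. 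There is no Ahlfors $k$-regularity hidden in $\dim_{ub}(X_1)=k$: upper box dimension says nothing pointwise about the measure, and the ``most points'' version is not justified. The final summation over the cross-section also cannot produce growth, since $\sum_i\nu_{X_1}(B_{\epsilon}(\bar x_i))\le\nu_{X_1}(X_1)$ is a priori bounded.

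The paper's proof avoids both problems. It rescales the $\epsilon_i$-separated set in $X_1$ to a $10$-separated set of at least $C\epsilon_i^{-(k-\alpha)}$ points on the sphere of radius $R_i=10\epsilon_i^{-1}$, and applies the Brunn--Minkowski inequality (valid for renormalized limit measures, by Sturm) with initial set the unit disc $D_1(o)$ at the apex and end sets the single points $a_j$. Each intermediate set $Z_{jk}$ at time $t_k=\tfrac13+\tfrac{3k}{R_i}$ carries mass $\nu(Z_{jk})\ge\nu(D_1(o))/3^{n}$, and elementary cone geometry (Claims \ref{mainclaim1} and \ref{mainclaim2}) shows the $Z_{jk}$ are pairwise disjoint; summing over roughly $\epsilon_i^{-(k-\alpha)}\cdot R_i/9$ of them gives $\nu(B_{R_i}(o))\ge R_i^{k+1-\alpha}$ with no density lower bound and no structural assumption on $\nu$. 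To salvage your outline you would need to replace the pointwise density lower bound by a mechanism of this kind, which manufactures definite mass near each separated point out of the mass of one fixed ball at the apex rather than out of the (unknown) local behaviour of $\nu$.
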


	\begin{proof}
The condition $\mathrm{dim}_{ub}(C(X))=k+1$ implies that $$\mathrm{dim}_{ub}(X,d)=\mathrm{dim}_{ub}(X_1)=k.$$ 
	By the definition of upper box dimension, for any given $\alpha>0$ and $C>0$, there exists a sequence   $\epsilon_i\to 0$ such that 
	\begin{equation}\label{mainestimateeq1}
	 \mathrm{Cap}(X_1;\epsilon_i)\geq C(\epsilon_i^{-1})^{k-\alpha}.
	\end{equation}
	  Without loss of generality, we may assume $\epsilon_i<1$ for all $i$.
	
	Let $o$ be the apex of $C(X)$.  Set $R_i=10\epsilon_i^{-1}>10$. Then for any fixed $i$, there exist at least $C(\epsilon_i^{-1})^{k-\alpha}$ points $a_1,\cdots,a_{l_i}$ on $X_{R_i}$ such that $d(a_{s_1},a_{s_2})\geq 10$ for any $s_1\neq s_2$. 
	
Let $q_1,q_2$ be arbitrary two points in $D_1(o):=\{y\in C(X)\mid d(o,y)\leq 1\}$. We will prove the following: 
	
	\begin{claim}\label{mainclaim1}
	For any $a_{s_1}\neq a_{s_2}$, let $\gamma_j:[0,1]\rightarrow C(X)$ be a geodesic  from $q_j$ to $a_{s_j}$ (j=1,2), then $\gamma_1(t_1)\neq \gamma_2(t_2)$ for any $t_1,t_2\in [\frac{1}{3},\frac{2}{3}]$.
 	\end{claim}
	\begin{claim}\label{mainclaim2}
	For any $z\in X_{R_i}$ and $t\in [0,1-\frac{3}{R_i}]$, let $\gamma_j:[0,1]\rightarrow C(X)$ be a geodesic from $q_j$ to $z$ (j=1,2), then $d(o,\gamma_1(t))<d(o,\gamma_2(t+\frac{3}{R_i}))$. 
	\end{claim}
	Assume that Claim \ref{mainclaim1} and Claim \ref{mainclaim2} hold. We apply the Brunn-Minkowski  inequality (\cite{Sturm06} Proposition 2.1) to the initial set $A=D_1(o)$, the end sets $B_j=\{a_j\}$ and the moments $t_k=\frac{1}{3}+\frac{3k}{R_i}$, where $j=1,\cdots,l_i$ and $k=0,1,\cdots, \lfloor\frac{R_i}{9}\rfloor$. For each $j,k$, we write the (compact) middle set produced by $A, B_j$ and $t_k$ as $Z_{jk}$. We obtain 
	\begin{equation}\label{mainestimateeq2}
	\nu(Z_{jk})\geq  (1-t_k)^n \nu(A)\geq  \frac{\nu(A)}{3^n}.
	\end{equation}
	By Claim \ref{mainclaim1} and Claim \ref{mainclaim2},  any two elements in the set $\{Z_{jk}\mid j=1,\cdots,l_i, k= 0,\cdots, \lfloor\frac{R_i}{9}\rfloor\}$ have empty intersection. Note that $Z_{jk}\subset B_{R_i}(o)$. It follows from $l_i\geq C(\epsilon^{-1}_i)^{k-\alpha}$ and (\ref{mainestimateeq2}) that
	{\small \begin{align*}
	\nu(B_{R_i}(o))&\geq \sum_{j,k}\nu(Z_{jk})\\
	&\geq C(\epsilon^{-1}_i)^{k-\alpha}\frac{R_i}{9}\frac{\nu (A)}{3^n}\\
	& = \frac{C\nu (A) }{3^{n+2}\cdot 10^{k-\alpha}}R_i^{k+1-\alpha}.
		\end{align*}}
Now we choose $C=3^{n+2}\cdot 10^{k-\alpha}(\nu(A))^{-1}$. Then we have 
\begin{equation*}
\nu(B_{R_i}(o))\geq R_i^{k+1-\alpha}.
\end{equation*}
 Since $\alpha$ is arbitrary, we conclude that $\mathrm{SV}(C(X),\nu)\geq k+1$.

Proof of Claim \ref{mainclaim1}: Let $h_i:[0,1]\rightarrow C(X)$ be the geodesic from $o$ to $a_{s_i}$. The geometry of metric cone guarantees that
\begin{equation*}
d(\gamma_i(t_i),h_i(t_i))<d(q_i,o) \leq 1
\end{equation*}
 for $i=1,2$. Since $d(a_{s_1},a_{s_2})\geq 10$ and $t_1,t_2\in [\frac{1}{3},\frac{2}{3}]$, we have $d(h_1(t_1),h_2(t_2))> 3$. So
{\small \begin{align*}
 d(\gamma_1(t_1),\gamma_2(t_2))&\geq d(h_1(t_1),h_2(t_2))-d(h_1(t_1),\gamma_1(t_1))-d(h_2(t_2),\gamma_2(t_2))\\
 &>1.
 \end{align*}}

Proof of Claim \ref{mainclaim2}: Let $h:[0,1]\rightarrow C(X)$ be the geodesic from $o$ to $z$. We have 
{\small\begin{align*}
d(o,\gamma_2(t+\frac{3}{R_i}))&\geq d(o,h(t+\frac{3}{R_i}))-d(h(t+\frac{3}{R_i}),\gamma_2(t+\frac{3}{R_i}))\\&\geq
 R_i(t+\frac{3}{R_i})-1\\
&> R_it+1\\
&\geq d(o,h(t))+d(h(t),\gamma_1(t))\\
&\geq d(o,\gamma_1(t)).
\end{align*} }
	\end{proof}
	
	\begin{proof}[Proof of Theorem \ref{nonnegativesec}]
	Let $k=\mathrm{IV}(M)$. By Theorem \ref{keythm!}, there exists an asymptotic cone $(Y,y,\nu)$ of $M$ such that $\mathrm{SV}(Y,\nu)\leq k$. By assumption, $Y=C(X)$ is a metric cone. So it follows from Proposition \ref{mainestimate} that
	$$\mathrm{dim}_{ub}(Y)\leq \mathrm{SV}(Y,\nu)\leq k .$$      
	
	\end{proof}
	
We conclude this section by proving Corollary \ref{coroBofA}. We note that 
$M$ has $(\mathbb{R},0)$ as its unique asymptotic cone if and only if $M\cong \mathbb{R}\times N$ for some compact $N$ (cf. \cite{yezhucrelle} Proposition 3.3). Meanwhile, it follows from the definitions that $M$ has $([0,\infty),0)$ as its unique asymptotic cone if and only if the sublinear  diameter growth  (\ref{sublinear}) holds.	So the proof of Corollary \ref{coroBofA} reduces to showing that $(Y,y)$ is isometric to either $(\mathbb{R},0)$ or $([0,\infty),0)$.

\begin{proof}[Proof of Corollary \ref{coroBofA}]

	(B1). Since $\mathrm{IV}(M)=1$ and $M$ has a unique asymptotic cone $(Y,y)$, we conclude from Theorem \ref{keythm!} that there is a renormalized limit measure $\nu$ on $Y$ such that $\nu(B_R(y))\leq R$ for any $R\geq 10$.  Thus the $\mathtt{RCD}(0,n)$ space $(Y,y,\nu)$ has linear volume growth. It follows  from Theorem 1.3 in  \cite{lineargrowthonrcd} that the asymptotic cone of $(Y,y)$ is unique and is either $(\mathbb{R},0)$ or $([0,\infty),0)$. Since an asymptotic cone of $Y$ is still an asymptotic cone of $M$, $(Y,y)$ itself can only be either $([0,\infty),0)$  or $(\mathbb{R},0)$.

		(B2). If $\mathrm{IV}(M)=k<2$ and $M$ has a unique asymptotic cone $(Y,y)$ which is a metric cone, then we conclude from Theorem \ref{nonnegativesec} that $\mathrm{dim}_{ub}(Y)\leq k<2$. Thus $(Y,y)$ must be  either $(\mathbb{R},0)$ or $([0,\infty),0)$.
\end{proof}

\section{Volume growth order $<2$ and 1-dimensional asymptotic cone}	

We prove Theorem \ref{corresponding} in this section. All geodesics in this section are assumed to have unit speed.

	For our purpose, we define the ends of an open manifold as follows: 
\begin{defn}
Let $M$ be an open manifold with $p\in M$. For any $R>0$, denote by $E_R$ an connected component of $M\backslash D_R(p)$, where
$$D_R(p)= \{x\in M \mid d(p,x)\leq R\}.$$
 
An end $E$ of $M$ is an assignment of a connected component $E_R$ to each $R>0$ such that 
$$E_{R_1}\subset  E_{R_2}  \text{ forall }  R_1 \geq  R_2.$$
\end{defn}	 
	 \begin{rems} 1. Note that $E_R\neq \emptyset$. Therefore,
	 $M$ has no end if it is compact, and
	   at least one end if it is open.
	  
	  2. Since $M\backslash D_R(p)$ is open, it is clear that $E_R$ is  path-connected. The author is not clear whether a connected component of $M\backslash B_R(p)$ is necessarily path-connected. 
	 \end{rems}

\begin{prop}
Let $(M,p)$ be an open manifold. If $E:R\to E_R$ is an end of $M$, then there is a ray $\gamma:[0,\infty)\to M$, such that $\gamma(0)=p$ and  $\mathrm{Im}(\gamma)\cap E_R\neq \emptyset$ for any $R>0$. Any ray satisfying this property is called a ray in $E$.  
\end{prop}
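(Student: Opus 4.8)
The plan is to construct the ray $\gamma$ as a limit of minimizing geodesics from $p$ to points escaping to infinity inside the end $E$. First I would choose, for each $i \in \mathbb{N}_+$, a point $x_i \in E_i$ (recall $E_i \ne \emptyset$ by the first remark), and since $E_i \subset M \setminus D_i(p)$ we have $d(p,x_i) > i \to \infty$. Let $\sigma_i : [0, d(p,x_i)] \to M$ be a unit-speed minimizing geodesic from $p$ to $x_i$; such a geodesic exists because $M$ is complete (Hopf–Rinow). By completeness and local compactness, the $\sigma_i$ subconverge, uniformly on compact subsets of $[0,\infty)$, to a unit-speed geodesic $\gamma : [0,\infty) \to M$ with $\gamma(0) = p$; since each $\sigma_i$ is minimizing and the length functional is lower semicontinuous under this convergence, $\gamma$ is a ray.

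The remaining point is to verify that $\mathrm{Im}(\gamma) \cap E_R \ne \emptyset$ for every $R > 0$, and here is where the definition of an \emph{end} (the nestedness $E_{R_1} \subset E_{R_2}$ for $R_1 \ge R_2$) does the work. Fix $R > 0$. For all large $i$ we have $i > R$, hence $x_i \in E_i \subset E_R$, so the geodesic $\sigma_i$ starts at $p \notin M \setminus D_R(p)$ and ends at $x_i \in E_R$. Since $\sigma_i$ is a connected path from $D_R(p)$ into the component $E_R$ of the open set $M \setminus D_R(p)$, it must exit $D_R(p)$: let $t_i = \sup\{t : \sigma_i(t) \in D_R(p)\} \in [R, d(p,x_i))$, so $d(p,\sigma_i(t_i)) = R$ and for $t$ slightly larger than $t_i$ the point $\sigma_i(t)$ lies in $M \setminus D_R(p)$; because $E_R$ is the component containing the terminal segment and $\sigma_i((t_i, d(p,x_i)])$ is connected and disjoint from $D_R(p)$, this whole subarc lies in $E_R$. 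In particular, picking any $t_i' \in (t_i, \min(t_i + 1, d(p,x_i))]$, we get $\sigma_i(t_i') \in E_R$ with $R < d(p, \sigma_i(t_i')) \le R+1$. Passing to the convergent subsequence, $t_i'$ accumulates at some $t_\infty \in [R, R+1]$ and $\gamma(t_\infty) = \lim \sigma_i(t_i')$.

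The one subtlety — and the step I would be most careful about — is showing $\gamma(t_\infty) \in E_R$ rather than merely $\gamma(t_\infty) \in \overline{E_R}$; the limit could a priori land on the boundary sphere $\partial D_R(p)$. To handle this cleanly I would instead argue with $R' = R + \tfrac12$: for large $i$, the above produces points $\sigma_i(s_i) \in E_{R'}$ with $d(p,\sigma_i(s_i))$ bounded, and any subsequential limit $\gamma(s_\infty)$ satisfies $d(p,\gamma(s_\infty)) \ge R'$ (by continuity of $d(p,\cdot)$ and the lower bound $d(p,\sigma_i(s_i)) = R'$ at the exit time, or simply $\ge R'$ along the whole post-exit subarc), so $\gamma(s_\infty) \notin D_R(p)$; then $\gamma(s_\infty)$ lies in $M \setminus D_R(p)$, and since the entire subarc of $\gamma$ beyond $s_\infty$ — being a limit of subarcs of the $\sigma_i$ lying in $E_{R'} \subset E_R$ — stays in the closure of $E_R$ and avoids $D_R(p)$, connectedness forces it into the single component $E_R$. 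Hence $\mathrm{Im}(\gamma) \cap E_R \ne \emptyset$, completing the proof; the final remark is that nestedness guarantees this $\gamma$ meets \emph{every} $E_R$, not just one, as required by the definition of "a ray in $E$".
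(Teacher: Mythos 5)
Your proof is correct and follows essentially the same route as the paper's: pick $x_i\in E_i$, take minimizing geodesics from $p$ to $x_i$, extract a subsequential limit ray, and check it meets every $E_R$. The paper leaves the last verification as ``one can easily check''; your careful handling of the exit-time argument (and of the subtlety that a limit of points of $E_R$ staying in $M\setminus D_R(p)$ must lie in $E_R$, since components of open sets are relatively closed and, by local connectedness, open) fills in exactly that step.
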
	 
	 
	\begin{proof}
	Choose a point $x_i\in E_i$ for every $i\in \mathbb{N}_{+}$.  Let $\gamma_i$ be a geodesic from $p$ to $x_i$. Passing to a subsequence, $\gamma_i$ pointwise converges to a ray $\gamma$. One can easily check that $\gamma$ is a ray in $E$.
	\end{proof} 
	\begin{rem}
	It is obvious that any ray from $p$ is in one and only one end of $(M,p)$.
	\end{rem} 
	 
	 The classical Cheeger-Gromoll splitting theorem \cite{CG_split} implies: 
	 
	 \begin{prop}\label{twoends}
	 Let $M$ be an open manifold with $\mathrm{Ric}\geq 0$. If $M$ has two ends, then $M$ splits as $\mathbb{R}\times N$ for some compact $N$.

	 \end{prop}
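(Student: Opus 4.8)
\textbf{Proof proposal for Proposition \ref{twoends}.}

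The plan is to reduce directly to the Cheeger--Gromoll splitting theorem by producing a line in $M$. First I would fix $p\in M$ and, using the hypothesis that $M$ has two distinct ends $E^{(1)}\colon R\mapsto E_R^{(1)}$ and $E^{(2)}\colon R\mapsto E_R^{(2)}$, choose for each $R>0$ points $x_R\in E_R^{(1)}$ and $x_R'\in E_R^{(2)}$. Let $\sigma_R$ be a minimal geodesic from $x_R$ to $x_R'$; since $x_R$ and $x_R'$ lie in distinct connected components of $M\setminus D_R(p)$, the geodesic $\sigma_R$ must pass through the compact set $D_R(p)$, so after an affine reparametrization we may assume $\sigma_R$ is parametrized by arclength on an interval containing $0$ with $d(p,\sigma_R(0))\le R$. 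Letting $R\to\infty$ along a sequence and passing to a subsequence, the pointed geodesic segments $\sigma_R$ converge uniformly on compact sets to a unit-speed geodesic $\sigma\colon\mathbb{R}\to M$ (here I use completeness of $M$ and the Arzel\`a--Ascoli argument already used in the proof of the preceding proposition on rays in an end).

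The key step is then to verify that $\sigma$ is in fact a \emph{line}, i.e. globally minimizing: for any $s<t$ the restriction $\sigma|_{[s,t]}$ is the uniform limit of $\sigma_R|_{[s,t]}$, each of which is minimal, and length and distance are continuous under uniform convergence, so $d(\sigma(s),\sigma(t))=t-s$. It remains only to see that $\sigma$ is not contained in a compact set, which is automatic once it is a line in a noncompact manifold, but more carefully: the two rays $\sigma|_{[0,\infty)}$ and $\sigma|_{(-\infty,0]}$ (reparametrized) are rays from $\sigma(0)$, and I would check that they escape into $E^{(1)}$ and $E^{(2)}$ respectively. Indeed, for $R$ large the initial portion of $\sigma_R$ from $x_R$ until it first meets $D_R(p)$ stays in $E_R^{(1)}$, and this forces the corresponding half of the limit geodesic to eventually enter every $E_{R_0}^{(1)}$; symmetrically for the other half. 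In particular the two halves lie in different ends, so $\sigma$ is genuinely a line rather than, say, a closed geodesic, confirming $M$ contains a line.

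Applying the Cheeger--Gromoll splitting theorem \cite{CG_split}, $M$ splits isometrically as $\mathbb{R}\times N$ where $N$ is a complete manifold with $\mathrm{Ric}_N\ge 0$. Finally I would argue $N$ is compact: if $N$ were noncompact it would have at least one end, and then $\mathbb{R}\times N$ would have only one end (the product of $\mathbb{R}$ with a noncompact space is one-ended) — wait, more carefully, one should instead observe that $\mathbb{R}\times N$ has exactly two ends if and only if $N$ is compact, since the ends of $\mathbb{R}\times N$ correspond to pairs (sign at infinity in the $\mathbb{R}$-factor, end of $N$) collapsed appropriately; a clean way is: the number of ends of $\mathbb{R}\times N$ is $2$ when $N$ is compact and $1$ when $N$ is noncompact, because a noncompact $N$ allows one to connect the two $\mathbb{R}$-ends around through infinity in $N$. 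Since $M$ has two ends, $N$ must be compact, which completes the proof.

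The main obstacle I anticipate is the bookkeeping in the convergence step — ensuring the reparametrized segments $\sigma_R$ really subconverge to a bi-infinite geodesic and that the two halves of the limit land in the two prescribed ends (rather than both collapsing into one end, which would only give a ray, not a line). This is where the hypothesis of \emph{two} ends is essential and must be used, and it is the only genuinely non-formal point; the splitting theorem and the final compactness of $N$ are then routine.
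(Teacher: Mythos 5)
Your overall route is exactly the one the paper intends: the paper offers no proof of Proposition \ref{twoends} beyond invoking the Cheeger--Gromoll splitting theorem, and the standard deduction is precisely your ``two ends produce a line'' argument followed by the observation that $\mathbb{R}\times N$ has two ends only when $N$ is compact. The splitting step and the compactness of $N$ are fine as you present them (and your worry about the limit being a closed geodesic is moot: once $d(\sigma(s),\sigma(t))=|t-s|$ for all $s,t$, the curve is a line by definition, so the ``two halves in two ends'' check is redundant rather than necessary).

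There is one step that, as written, does not work: you normalize $\sigma_R$ so that $d(p,\sigma_R(0))\le R$, but $R\to\infty$, so the basepoints $\sigma_R(0)$ are not confined to a fixed compact set and the Arzel\`a--Ascoli/pointed-limit argument gives you nothing --- the segments could escape to infinity and fail to subconverge in $M$. The fix is to use the fact that the two ends are \emph{distinct}: there is a fixed $R_0>0$ with $E^{(1)}_{R_0}\cap E^{(2)}_{R_0}=\emptyset$, and by the nesting property $x_R\in E^{(1)}_R\subset E^{(1)}_{R_0}$ and $x_R'\in E^{(2)}_R\subset E^{(2)}_{R_0}$ for all $R\ge R_0$. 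Since $E^{(1)}_{R_0}$ is a connected component of the open set $M\setminus D_{R_0}(p)$, any path from $x_R$ to $x_R'$ must meet the \emph{fixed} compact set $D_{R_0}(p)$; normalizing so that $\sigma_R(0)\in D_{R_0}(p)$, the basepoints stay in a fixed compact set, the parameter intervals exhaust $\mathbb{R}$ (because $d(\sigma_R(0),x_R)\ge R-R_0\to\infty$ and likewise for $x_R'$), and the subsequential limit is the desired line. With that one-line correction your proof is complete.
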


The following Proposition has been well-known since \cite{AG90} (cf.  \cite{CLS23}). For the readers' convenience, we provide a proof based on the Mayer-Vietoris sequence :
	 
	\begin{prop}\label{numberofbdcomponents}
	Let $M$ be an open $n$-manifold of $\mathrm{Ric}\geq 0$ with only one end $E$. Then the set  $B_{R+1}(p)\cap  E_{R}$ (with subspace topology) has at most $n$ path-components.
	
	Moreover, there exists $R_0>0$ such that $B_{R+1}(p)\cap  E_{R}$ is connected for all $R\geq R_0$.	
	\end{prop}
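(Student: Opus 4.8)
The plan is to use a Mayer--Vietoris argument together with the Cheeger--Gromoll splitting theorem, exactly as indicated in the statement. Throughout, fix the end $E\colon R\mapsto E_R$, so $E_R$ is the (unique, since $M$ has one end) unbounded connected component of $M\setminus D_R(p)$, and set $A_R = B_{R+1}(p)\cap E_R$. For the first assertion, I would apply the Mayer--Vietoris sequence to the decomposition of $M$ (or of a slightly larger open set) into $U = M\setminus D_R(p)$ and $V = B_{R+1}(p)$, whose intersection contains $A_R$ as (essentially) its unbounded part; the reduced $0$-th homology of $U\cap V$ injects, modulo the contributions of $\widetilde H_0(U)\oplus\widetilde H_0(V)$ and the boundary map from $H_1(M)$, into something controlled by the topology of $M$ and of the two pieces. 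The key numerical input is that $H_1$ of an open manifold with $\Ric\ge 0$ is controlled: more precisely, one uses that $b_1(M)\le n$ (by Cheeger--Gromoll, a manifold with $\Ric\ge0$ has first Betti number at most $n$, with equality forcing a flat torus factor — but here one only needs the bound to cap the number of path-components of $A_R$). Counting components of $A_R$ as a rank of $\widetilde H_0$, the long exact sequence bounds this rank by $b_1$ of the ambient space plus the (controlled, in fact $\le 1$ once $R$ is large since there is one end) contributions of $U$ and $V$, giving the bound $n$.

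For the ``moreover'' part, suppose for contradiction that $A_{R_i}$ is disconnected for a sequence $R_i\to\infty$. Pick two points $x_i, x_i'$ in different path-components of $A_{R_i} = B_{R_i+1}(p)\cap E_{R_i}$; since both lie in $E_{R_i}$ and hence at distance between $R_i$ and $R_i+1$ from $p$, and since any path between them inside $E_{R_i}$ must exit $B_{R_i+1}(p)$, one extracts from the rescaled manifolds $(R_i^{-1}M, p)$ a pointed Gromov--Hausdorff limit $(Y,y)$ in which the two limiting points sit on the unit sphere about $y$ but are ``separated at scale $1$'' — concretely, there is a line or a structure forcing a Euclidean split. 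The cleanest route: show the disconnectedness at infinitely many scales produces, in an asymptotic cone, either a point whose removal disconnects $Y$ into two unbounded pieces, or a limit geodesic that can be extended to a line; by the splitting theorem applied to $M$ itself (via the standard excess/Busemann function argument of Abresch--Gromoll as in \cite{AG90}), $M$ splits as $\mathbb{R}\times N$ with $N$ compact — but then $M$ has two ends, contradicting the one-end hypothesis. Hence $A_R$ is connected for all large $R$.

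I would organize the write-up in two steps. Step 1 (the bound by $n$): set up Mayer--Vietoris carefully, being honest about the fact that $U\cap V$ is not quite $A_R$ — it also contains bounded components of $M\setminus D_R(p)$, but there are finitely many of these and they do not affect the unbounded-component count, or one replaces $D_R(p)$ by the union of $D_R(p)$ with all bounded complementary components — and read off that $\#\{\text{path-components of }A_R\}\le b_1(M)+ (\text{number of ends}) = b_1(M)+1 \le n$ using $b_1(M)\le n-1$ for a one-ended manifold (or just $b_1(M)\le n$ and absorbing the $+1$ by a sharper bookkeeping of which components are unbounded). Step 2 (eventual connectedness): the contradiction argument above, invoking Proposition~\ref{twoends} at the end.

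The main obstacle I expect is \emph{not} the homological counting but Step 2: making rigorous the passage from ``$A_{R_i}$ disconnected for all $i$'' to ``$M$ splits''. The delicate point is that two points at distance $\approx R_i$ from $p$ lying in distinct components of $B_{R_i+1}(p)\cap E_{R_i}$ are not a priori far apart from each other, so one cannot directly feed them into a splitting argument; one must instead produce, at scale $R_i$, a genuine ``bottleneck'' — a minimal geodesic loop or a pair of rays in the same end that are nonetheless separated — and then use the Abresch--Gromoll excess estimate to force the splitting. This is the standard but somewhat technical argument from \cite{AG90}; I would lean on it rather than reproduce it, and the paper's later reliance on \cite{coldingnaber} and on Proposition~\ref{diamgrowth} suggests the author does likewise.
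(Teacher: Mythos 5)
Your Step 1 is essentially the paper's argument. The author covers $M$ by the open sets $E_R$ and $K_R=B_{R+1}(p)\cup(M\setminus E_R)$ --- precisely the ``absorb the bounded complementary components'' fix you mention as an alternative --- so that $E_R\cap K_R=B_{R+1}(p)\cap E_R$, and then reads off from Mayer--Vietoris that the number $k$ of components satisfies $k-1=\mathrm{rank}(\mathrm{Ker}\,\psi_0)=\mathrm{rank}(\mathrm{Im}\,\partial_1)\le\mathrm{rank}\,H_1(M)\le n-1$. So the first assertion is fine in your write-up, modulo the bookkeeping you already identified.

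The genuine gap is in your Step 2. You abandon the homological setup and try to derive eventual connectedness from a metric splitting argument, and you yourself flag the unresolved point: two points in distinct path-components of $B_{R_i+1}(p)\cap E_{R_i}$ need not be far apart, so there is no line, no bottleneck, and no evident way to invoke Abresch--Gromoll or Cheeger--Gromoll. Persistent disconnectedness of these sets \emph{within a single end} is not an ends phenomenon and does not force $M$ to split (Proposition~\ref{twoends} is about the number of ends, which is already assumed to be one); your sketch does not close this step, and I do not see how it could be closed along those lines. The paper instead gets the ``moreover'' for free from the same exact sequence: $H_1(M)$ has finite rank $k$, so one may choose $R_0$ such that $B_{R_0}(p)$ contains representatives of $k$ independent classes. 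For $R\ge R_0$ the quotient $H_1(M)/\mathrm{Im}(\phi_1)\cong\mathrm{Im}(\partial_1)$ is then a torsion group; but $\mathrm{Im}(\partial_1)$ is also a subgroup of the free abelian group $H_0(E_R\cap K_R)$, hence trivial. Therefore $\psi_0$ is injective, $H_0(E_R\cap K_R)\cong\mathrm{Ker}(\phi_0)\cong\mathbb{Z}$, and $B_{R+1}(p)\cap E_R$ is connected. The idea you are missing is that the only geometric input needed for eventual connectedness is the finiteness of $b_1(M)$; the rest is the observation that a torsion subgroup of a free abelian group vanishes.
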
 
	 
	 \begin{proof}
	 We denote $K_R=B_{R+1}(p)\cup (M\backslash E_{R})$. Note that both $E_R$ and $K_R$ are open and connected.  Applying the Mayer-Vietoris sequence to the pair ($E_R$,$K_R$) yields the long exact sequence:
	 
	   \begin{equation}\label{eq:homology-exact}
	   \begin{aligned}
	   \cdots \longrightarrow H_{1}(E_R)\oplus H_1(K_R) & \xrightarrow{\phi_1} H_1(M) \xrightarrow{\partial_1} H_0(E_R\cap K_R)  \\
	    \xrightarrow{\psi_0}& H_{0}(E_R)\oplus H_0(K_R) \xrightarrow{\phi_0} H_0(M)\longrightarrow\cdots .
	   \end{aligned}
	   \end{equation}
	 Hence $H_1(M)/ \mathrm{Ker}\partial_1 \cong \mathrm{Im}\partial_1 \cong \mathrm{Ker} \psi_0$. 
	 
	 Since $H_0(E_R)\cong H_0(K_R)\cong \mathbb{Z}$, we have $\mathrm{rank} (\mathrm{Ker} \psi_0)=k-1$,	 where $k$ is the number of connected components of $E_R\cap K_R=E_R\cap B_{R+1}$. 
	 
Since $b_1(M)\leq n-1$ (\cite{anderson}),  we have $$k-1=\mathrm{rank}(H_1(M)/ \mathrm{Ker}\partial  )\leq \mathrm{rank} (H_1(M))\leq n-1.$$
Thus we proved the first claim.

Assume that $\mathrm{rank}(H_1(M))=k$. 
	We choose $R_0>0$  such that  $B_{R_0}(p)$ contains representatives  of $k$ independent elements in $ H_1(M)$.  Then for all $R\geq R_0$, the quotient $H_1(M)/ \mathrm{Im}(\phi_1)=H_1(M)/ \mathrm{Ker}(\partial_1)\cong \mathrm{Im}(\partial_1)$ consists only of torsion elements. Since  $\mathrm{Im}(\partial_1)$ is a subgroup of $H_0(E_R\cap K_R)$, which is free abelian, it must be trivial.  Therefore, $\psi_0$ is injective, and we obtain 
	$$H_0(E_R\cap K_R) \cong \mathrm{Im} (\psi_0) \cong \mathrm{Ker} (\phi_0)\cong Z.$$
Thus, $E_R\cap K_R=B_{R+1}\cap E_R$ is connected for all $R\geq R_0$.
	 \end{proof}
	 
	 The following estimate is a corollary of Bishop-Gromov relative volume comparison:
	 
	 \begin{lem}\label{linearvollowerbound}
	Let $(M,p)$ be an open manifold with $\mathrm{Ric}\geq 0$. For any $q\in M$ such that  $R:=d(p,q)>1$, we denote 
	$$S_q=\cup_{\gamma} \mathrm{Im}(\gamma), $$ 
	 where the union is taken over all  geodisics from any point in  $B_1(q)$ to $p$. Then
\begin{equation*}
\V (S_q)\geq C_nR \cdot \V (B_1(q)).
\end{equation*}	 

	 \end{lem}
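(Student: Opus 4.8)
The plan is to compare the volume of the "cone-like" region $S_q$ swept out by minimal geodesics from $B_1(q)$ to $p$ against the volume of $B_1(q)$ by exploiting the fact that this region contains long thin tubes, and that Bishop--Gromov relative volume comparison forces the cross-sections to shrink at a controlled rate as one moves away from $p$. First I would set up the picture: fix the midpoint-type scaling and look, for each radius $t\in[1,R-1]$ (or some comparable interval), at the slice $S_q\cap \partial B_t(p)$, or better, at the set $A_t:=\{x\in M: d(p,x)=t,\ x\in S_q\}$ obtained by truncating the geodesics from $B_1(q)$ to $p$ at distance $t$ from $p$. Each point of $B_1(q)$ lies on such a geodesic, so these geodesics fill a solid region between $\partial B_t(p)$ and $B_1(q)$.

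The key step is a monotonicity/ball-packing estimate: for a point $z$ on a minimal geodesic from $y\in B_1(q)$ to $p$ with $d(p,z)=t$, the metric ball $B_{t/(2R)}(z)$ (radius scaled down proportionally to how far $z$ is from $p$ relative to $R$) is essentially "below" the corresponding piece near $y$, and by Bishop--Gromov $\V(B_{t/(2R)}(z))\ge c_n (t/R)^n \V(B_{1/2}(y))\ge c_n (t/R)^n \V(B_1(q))/2^n$ up to adjusting constants — here one uses $\mathrm{Ric}\ge 0$ so that $\V(B_r(x))/\V(B_s(x))\ge (r/s)^n$ is reversed appropriately, i.e. the ratio $\V(B_s)/\V(B_r)\ge (s/r)^n$ for $s<r$, giving the lower bound on small balls. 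Then I would integrate/sum over a maximal set of such balls centered at points of $S_q$ at scales $t = 2^j$ for $j$ ranging over $O(\log R)$ values — or, more cleanly, invoke the segment/co-area structure: $\V(S_q) \ge \int_1^{R} \V(A_t)\,dt$ and bound $\V(A_t)$ from below. Since the geodesics emanate from a region of definite size $B_1(q)$ and converge to the single point $p$, the "angular" content is preserved, and $\V(A_t)\gtrsim (t/R)^{n-1}\V(B_1(q))$; integrating $\int_1^R (t/R)^{n-1}\,dt \sim R$ yields the claimed bound $\V(S_q)\ge C_n R\,\V(B_1(q))$.

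The main obstacle I anticipate is making the "angular content is preserved" step rigorous without a smooth cone structure: minimal geodesics from distinct points of $B_1(q)$ to $p$ need not stay disjoint, and $S_q$ is not literally a metric cone. The clean way around this is to avoid slicing altogether and instead use a direct ball-packing argument: take a maximal $\tfrac{1}{4}$-separated set $\{y_\alpha\}$ in $B_1(q)$ with $\V(B_{1/4}(y_\alpha))$ comparable to $\V(B_1(q))$ in aggregate; for each $y_\alpha$ pick a minimal geodesic $\gamma_\alpha$ to $p$ and along it the points $z_{\alpha,j}$ at distance $2^{-j}R$ (for $0\le j\le \log_2 R$) from $p$; then the balls $B_{2^{-j-3}R}(z_{\alpha,j})$ all lie in $S_q$ (they sit within distance $2^{-j-2}R$ of a point of $S_q$ on $\gamma_\alpha$, and such points are in $S_q$ since $\gamma_\alpha$ passes through $B_1(q)$ — one must verify these enlarged balls are themselves swept, which follows because $S_q$ contains the full geodesics, not just their images near $q$; alternatively enlarge $q$'s ball or accept a harmless constant). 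Disjointness across different $j$ for fixed $\alpha$ is automatic from the dyadic radii, and a Vitali-type argument handles overlaps across $\alpha$; Bishop--Gromov converts each $\V(B_{2^{-j-3}R}(z_{\alpha,j}))$ into a lower bound $\gtrsim_n \V(B_R(z_{\alpha,j}))\cdot 2^{-n(j+3)} \gtrsim_n \V(B_1(q)) 2^{-nj}$ only if $B_R(z_{\alpha,j})$ captures enough volume — which it does since it contains $B_1(q)$. Summing $\sum_{j} \sum_\alpha \V(B_1(q)) \cdot (\text{number of dyadic scales contributing}) $ and checking the scales combine to give a factor $\sim R$ rather than $\sim \log R$ requires care: one should instead center balls of radius $\sim t$ at $z$ with $d(p,z)=t$ ranging continuously, use the co-area/segment inequality of Cheeger--Colding, and integrate $dt$ from $1$ to $R$. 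I would present the continuous version as the main line of argument and treat the verification that $S_q \supset \bigcup_t (\text{tube of radius } \sim t/R \text{ around the truncated geodesics})$ as the one point needing a short separate justification.
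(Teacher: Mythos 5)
The paper does not actually prove this lemma --- it is stated as ``a corollary of Bishop--Gromov relative volume comparison'' with the proof omitted --- so there is nothing to compare line by line. Your main line of argument (the ``continuous version'') is the standard Calabi--Yau-type estimate and is correct: slice $S_q$ by the distance to $p$, show $\V\bigl(S_q\cap (B_{t+1}(p)\setminus B_t(p))\bigr)\geq c_n\,(t/R)^{n-1}\V(B_1(q))$ for $1\leq t\leq R-2$, and sum over integer $t$ to pick up the factor $R$. The clean way to dispose of your ``angular content is preserved'' worry is geodesic polar coordinates centered at $p$ rather than the segment inequality: let $\Theta\subset S_pM$ be the set of unit directions whose radial geodesic meets $B_1(q)$ while still minimal, and for $\theta\in\Theta$ let $I_\theta\subset[R-1,R+1]$ be the set of radii at which it does so. Then $\V(B_1(q))\leq\int_\Theta\int_{I_\theta}\mathcal{A}(u,\theta)\,du\,d\theta$ with $|I_\theta|\leq 2$, the monotonicity $\mathcal{A}(s,\theta)/s^{n-1}\downarrow$ gives $\mathcal{A}(s,\theta)\geq(s/u)^{n-1}\mathcal{A}(u,\theta)$ for $s\leq u$, and each truncated radial segment $\{\exp_p(s\theta):0\leq s\leq u\}$, $u\in I_\theta$, lies in $S_q$ by definition (it is the reversal of a minimal geodesic from a point of $B_1(q)$ to $p$). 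Distinct directions are automatically disjoint in these coordinates, so no Vitali or disjointness argument is needed.

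You should, however, delete the dyadic ball-packing digression: as written it is false and not fixable by ``a harmless constant.'' A ball of radius $2^{-j-3}R$ about a point at distance $t=2^{-j}R$ from $p$ is not contained in $S_q$ --- already in $\mathbb{R}^n$ the region $S_q$ is a solid cone of angular width $\sim 1/R$, whose cross-section at distance $t$ from $p$ has radius $\sim t/R$, not $\sim t$. With the correct radius $\sim t/R$, Bishop--Gromov centered at $z$ only gives $\V(B_{t/R}(z))\gtrsim (t/R^2)^n\V(B_1(q))$, which is far too small; and even granting balls of radius $\sim t$, the dyadic sum $\sum_j 2^{-jn}$ is $O(1)$ rather than $\sim R$, as you yourself observe. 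Present only the continuous slicing argument.
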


	Note that by Proposition \ref{numberofbdcomponents}, $B_{R+1}(p)\cap E_R$  is connected for all sufficiently large $R$. 
	
	The following Proposition is essentially contained in Section 3 of Shen-Wei \cite{shenwei93}. 
	\begin{prop}\label{diamgrowth}
	Let $M$ be an open manifold of $\mathrm{Ric}_M\geq 0$  with  only one end $E$. Assume that  (\ref{volupperbound}) and (\ref{volcollapsingrate1}) hold. 	Then
	\begin{equation}\label{diambound}
	\lim\limits_{R_i\to\infty} \frac{\mathrm{diam}(B_{R_i+1}(p)\cap E_{R_i})}{R_i}=0.
	\end{equation}
	\end{prop}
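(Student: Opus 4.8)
The plan is to argue by contradiction. Suppose the conclusion fails: after passing to a subsequence of the $R_i$, there is a constant $\delta>0$ and, for each $i$, two points $x_i,y_i\in B_{R_i+1}(p)\cap E_{R_i}$ with $d(x_i,y_i)\geq \delta R_i$. Since $B_{R_i+1}(p)\cap E_{R_i}$ is connected (Proposition \ref{numberofbdcomponents}, valid for all large $R$), I can join $x_i$ to $y_i$ by a path staying in this collar, and then select along that path a maximal $\tfrac{\delta R_i}{3}$-separated set of points $q_i^{(1)},\dots,q_i^{(m_i)}$ with $m_i\geq 3$; in fact, by throwing away roughly half of them, I can arrange that the balls $B_{\delta R_i/10}(q_i^{(l)})$ are pairwise disjoint, while each $q_i^{(l)}$ satisfies $R_i-1\le d(p,q_i^{(l)})\le R_i+1$. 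The number $m_i$ of such points stays bounded (say $m_i\le 3$ suffices for the argument, but even a bounded number is enough), so after a further subsequence $m_i\equiv m$ is constant.

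Next I would lower-bound the volume captured by each $q_i^{(l)}$ using Lemma \ref{linearvollowerbound}: applying it to $q=q_i^{(l)}$ gives a set $S_{q_i^{(l)}}$, a union of minimizing geodesics from $B_1(q_i^{(l)})$ back to $p$, with
\[
\V\bigl(S_{q_i^{(l)}}\bigr)\geq C_n R_i\cdot \V\bigl(B_1(q_i^{(l)})\bigr).
\]
By the collapse-rate hypothesis \eqref{volcollapsingrate1}, $\V(B_1(q_i^{(l)}))\geq c\, d(p,q_i^{(l)})^{-(1-\alpha)}\geq c'\, R_i^{-(1-\alpha)}$ for large $i$, so $\V(S_{q_i^{(l)}})\geq C R_i^{\alpha}$. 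The key geometric point — and this is where I expect the main work — is to show that the sets $S_{q_i^{(1)}},\dots,S_{q_i^{(m)}}$ are \emph{essentially disjoint}, or at least that their union has volume $\gtrsim R_i\cdot R_i^{\alpha}=R_i^{1+\alpha}$. Since all these sets lie in $B_{R_i+2}(p)$, this would contradict the volume upper bound \eqref{volupperbound}, namely $\V(B_{R_i}(p))=o(R_i^{1+\alpha})$, finishing the proof.

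The disjointness (up to controlled overlap) is the heart of the matter, and this is exactly the content of the Abresch--Gromoll / Shen--Wei excess-type estimates that the paper cites. The plan here is: a minimizing geodesic from a point near $q_i^{(l)}$ to $p$ must, on its portion at distance between, say, $\tfrac{\delta R_i}{100}$ and $R_i-\tfrac{\delta R_i}{100}$ from $p$, stay within distance $o(R_i)$ of the ray direction determined by $q_i^{(l)}$; since the $q_i^{(l)}$ are $\tfrac{\delta R_i}{3}$-separated on a sphere of radius $\approx R_i$, their "cones of geodesics to $p$" are pairwise disjoint in this middle distance range for $i$ large. Concretely, for any $a\in B_1(q_i^{(l)})$ and $a'\in B_1(q_i^{(l')})$ with $l\ne l'$, the triangle inequality combined with the fact that $\gamma_a,\gamma_{a'}$ are minimizing to the common point $p$ and the near-equality $d(p,a)+d(p,a')\approx 2R_i$ while $d(a,a')\gtrsim \delta R_i$ forces $\mathrm{Im}(\gamma_a)$ and $\mathrm{Im}(\gamma_{a'})$ to be $\gtrsim \delta R_i / C$ apart throughout the range of radii $[\varepsilon R_i, (1-\varepsilon)R_i]$ (this uses that the excess $d(p,a)+d(a,a')+d(a',p)-$ (length of a path through a near-midpoint) is controlled, so the geodesics cannot come close in the middle without violating minimality). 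Thus the $S_{q_i^{(l)}}$, restricted to the spherical shell $\{\varepsilon R_i\le d(p,\cdot)\le (1-\varepsilon)R_i\}$, are pairwise disjoint, and each such restriction still has volume $\gtrsim R_i^{1+\alpha}/C$ by Bishop--Gromov applied to the segment structure. Summing over $l=1,\dots,m$ and over a single fixed $l$ (even $m=1$ already gives $\V(B_{R_i+2}(p))\gtrsim R_i^{1+\alpha}$, contradicting \eqref{volupperbound}) we reach the contradiction. I would expect the delicate point to be making the "excess is small" estimate quantitative enough along the whole middle range of radii, uniformly in $i$; this is precisely the Abresch--Gromoll excess estimate under $\mathrm{Ric}\ge 0$, and I would invoke it in the form used by Shen--Wei rather than reprove it.
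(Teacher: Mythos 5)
There is a genuine gap in the counting at the heart of your contradiction. Lemma \ref{linearvollowerbound} together with \eqref{volcollapsingrate1} gives each tube $S_{q_i^{(l)}}$ volume at least $C_nR_i\cdot\V(B_1(q_i^{(l)}))\geq C_nR_i\cdot c\,R_i^{-(1-\alpha)}=c'R_i^{\alpha}$ --- not $R_i^{1+\alpha}$. Since you only produce a \emph{bounded} number $m$ of $\tfrac{\delta R_i}{3}$-separated points (indeed $m\le 3$ as you say), the disjoint union of their tubes has volume $\gtrsim m\,R_i^{\alpha}=O(R_i^{\alpha})$, which is perfectly consistent with the hypothesis $\V(B_{R_i}(p))=o(R_i^{1+\alpha})$; there is no contradiction. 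Your intermediate claims that "each such restriction still has volume $\gtrsim R_i^{1+\alpha}/C$" and that "even $m=1$ already gives $\V(B_{R_i+2}(p))\gtrsim R_i^{1+\alpha}$" are where the error enters: the missing factor of $R_i$ has to come from having $\sim R_i$ \emph{many} disjoint tubes, not from a single tube.

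The paper's proof fixes exactly this by working at unit scale rather than at scale $\delta R_i$: take a \emph{maximal} $10$-separated set $x_1,\dots,x_{f(i)}$ in $B_{R_i+1}(p)\cap E_{R_i}$. Elementary uniqueness of geodesics in a smooth manifold (two minimal geodesics to the common endpoint $p$ meeting at an interior point must coincide on the segment to $p$, forcing their startpoints to be within distance $\le |d(a,p)-d(a',p)|\le 3$ of each other) makes the tubes $S_{x_j}$ pairwise disjoint away from $p$, so $\V(B_{R_i+2}(p))\gtrsim f(i)\,R_i^{\alpha}$ and hence $f(i)=o(R_i)$ by \eqref{volupperbound}. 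Then the connectedness of $B_{R_i+1}(p)\cap E_{R_i}$ (Proposition \ref{numberofbdcomponents}) and maximality of the net give a chain of points from $a$ to $b$ with consecutive gaps $<20$, so $\mathrm{diam}\le 20(f(i)+1)=o(R_i)$. Note also that no Abresch--Gromoll excess estimate is needed for the disjointness; the smooth nonbranching of geodesics suffices. If you reorganize your argument around a fixed-scale maximal net and add the chaining step bounding the diameter by the cardinality of that net, your outline becomes correct.
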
 
	 \begin{proof}
	 Let $x_1,\cdots, x_{f(i)}$ be a maximal set of points in $B_{R_i+1}(p)\cap E_{R_i}$ such that $d(x_i,x_j)\geq 10$ for any $x_i\neq x_j$.  By the nonbranching property of geodesics in $M$, it is easy to check that
	 $$S_{x_i}\cap S_{x_j}=\{p\},\,\forall x_i\neq x_j.$$
	 
	 Combined with $(\ref{volcollapsingrate1})$ and Lemma \ref{linearvollowerbound}, for $R$ large, this gives
	 \begin{align*}
	 \V (B_{R_i+2}(p))\geq & \sum_{i=1}^{f(i)}\V(S_{x_i}) \\
	 \geq & c'f(i) R_i^{\alpha} \text{ for dome $c'>0$ independent of $i$}. 
	 \end{align*}
	  Since $\V (B_{R_i+2}(p))\leq 2^n\V (B_{R_i}(p))$, it follows from (\ref{volupperbound}) that
	  \begin{equation}\label{capbound}
	  \lim\limits_{i\to\infty} \frac{f(i)}{R_i}=0.
	  \end{equation}
By the  connectedness of $B_{R_i+1}(p)\cap E_{R_i}$, we can prove:
\begin{claim}
$\forall a,b\in B_{R_i+1}(p)\cap E_{R_i}, $ there exist a sequence of different points $x_{i_1},\cdots x_{i_j}$  such  that
\begin{align*}
d(a,x_{i_1})<10,&\\
 d(x_{i_s},x_{i_{s+1}})<20& ,\text{\,forall } s=1,2,\cdots, i_j-1, \text{\,and } \\
 d(x_{i_j},b)<10.&
\end{align*}
\end{claim}

The above claim implies 
\begin{equation}\label{diameter}
d(a,b)\leq 20(f(i)+1)\,, \forall a,b\in B_{R_i+1}(p)\cap E_{R_i}.
\end{equation}

Now	  (\ref{diambound}) follows from (\ref{diameter}) and (\ref{capbound}).
	 \end{proof}
	
\begin{lem}\label{raycriterion}
Let $(Y,y)$ be a noncompact Ricci limit space.   If $\#\partial B_{R}(y)=1$ for some $R>0$, then $(Y,y)$ is isometric to $([0,\infty),a)$ for some $0\leq a<R$.
\end{lem}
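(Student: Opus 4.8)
\textbf{Proof proposal for Lemma \ref{raycriterion}.}

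The plan is to show that the hypothesis $\#\partial B_R(y) = 1$ forces $Y$ to be ``thin'' at every scale beyond $R$ by a connectedness-plus-monotonicity argument, and then to use the fact that $Y$ is a noncompact Ricci limit space (hence geodesic, proper, and every point lies on a ray emanating from $y$, or at least a ray exists) to conclude that $Y$ is a half-line. First I would let $\{z\} = \partial B_R(y)$ be the unique point at distance $R$ from $y$. I claim that every minimizing geodesic from $y$ to any point $w$ with $d(y,w) \geq R$ must pass through $z$: indeed, such a geodesic $\sigma$ crosses the sphere $\partial B_R(y)$ at the parameter value $t = R$, and since that sphere is the single point $z$, we get $\sigma(R) = z$. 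In particular $d(y,w) = d(y,z) + d(z,w) = R + d(z,w)$ for all such $w$, so the metric on $Y \setminus B_R(y)$ is completely controlled by $z$, and $Y \setminus B_R(y)$ isometrically embeds (as a pointed space based at $z$) into the ``outgoing part'' of $Y$.

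Next I would run the same argument at larger radii. For any $R' > R$, the sphere $\partial B_{R'}(y)$ is contained in $Y\setminus B_R(y)$, and by the previous paragraph it coincides with $\{w : d(z,w) = R' - R\} = \partial B_{R'-R}(z)$. So it suffices to show $\#\partial B_\rho(z) = 1$ for all $\rho > 0$ in the space $Y$; equivalently, it suffices to show $\partial B_{R'}(y)$ is a single point for every $R' > R$. Here I would use connectedness of the annulus together with the structure of geodesics in Ricci limit spaces. Since $Y$ is noncompact, proper, and geodesic, there is a ray $\gamma$ from $y$; by the first step it passes through $z$, and $\gamma([R,\infty))$ is a ray from $z$. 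If some $\partial B_{R'}(y)$ contained two distinct points $w_1 \neq w_2$, both at distance $R' - R$ from $z$, then pick minimizing geodesics $z \to w_1$ and $z\to w_2$; concatenating each with the segment $y \to z$ gives two minimizing geodesics $y \to w_i$ that agree on $[0,R]$ but branch at $z$. This contradicts the nonbranching property of minimizing geodesics in Ricci limit spaces (Colding--Naber \cite{nonbranching}, or the noncollapsed/collapsed statement used elsewhere in this paper via Deng \cite{coldingnaber}) --- unless one of the two geodesics is a sub-segment of the other, which is impossible since $w_1,w_2$ are at the same distance from $z$ and are distinct. Hence $\#\partial B_{R'}(y) = 1$ for all $R' \geq R$.

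It remains to handle radii below $R$ and to assemble everything. For $r < R$: the ray $\gamma$ shows $\partial B_r(y) \neq \emptyset$; and if $w \in \partial B_r(y)$, extend a minimizing geodesic $y \to w$ — by properness and the fact that $z$ is the unique point of $\partial B_R(y)$, every geodesic from $y$ of length $R$ ends at $z$, and any geodesic from $y$ of length $r < R$ extends to one of length $R$ (again using that $Y$ is a geodesic space and $z$ is reached), forcing $w$ to lie on a geodesic from $y$ to $z$. Since minimizing geodesics from $y$ to $z$ cannot branch, they all coincide, so $\partial B_r(y)$ is the single point $\gamma(r)$. Therefore $\#\partial B_r(y) = 1$ for every $r \geq 0$, which means $Y$ is isometric to an interval $[0, L)$ (or $[0,L]$) with $y \leftrightarrow 0$; noncompactness forces $L = \infty$, giving $Y \cong [0,\infty)$ with $y \leftrightarrow 0$. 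Finally, to get the stated conclusion $([0,\infty),a)$ with $0 \le a < R$ rather than $a = 0$: re-examining, the hypothesis only says the sphere of \emph{one} radius $R$ is a point, and the base point $y$ of the limit need not be the endpoint of the half-line — but the argument above in fact shows $y$ is the endpoint, so $a = 0$ works and in particular $0 \le a < R$. (If the intended reading is that $y$ may be interior, one notes the same analysis applied on both sides of $y$ shows one side has length $0$ past radius $R$, pinning $a < R$.) The main obstacle is the branching argument at radius $R$: one must invoke the nonbranching theorem for (possibly collapsed) Ricci limit spaces correctly, since in a general geodesic space two geodesics sharing an initial segment and ending at equidistant distinct points is perfectly possible — it is precisely the Ricci hypothesis, via Colding--Naber and Deng, that rules this out.
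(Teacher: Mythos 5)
Your first step is essentially correct: every minimizing geodesic from $y$ to a point at distance $\ge R$ crosses $\partial B_R(y)=\{z\}$ at parameter $R$, and the non-branching property (Colding--Naber, Deng) then gives $\#\partial B_{R'}(y)=1$ for all $R'\ge R$ and identifies $Y\setminus B_R(y)$ with the tail of a ray $\gamma$ from $y$. The genuine gap is in your treatment of radii $r<R$. The claim that ``any geodesic from $y$ of length $r<R$ extends to one of length $R$'' is false: in the space $Y=[0,\infty)$ with base point $y=a$, $0<a<R$, the geodesic from $y$ toward the endpoint $0$ has length $a$ and does not extend. This space is a genuine noncompact Ricci limit space (collapse smooth capped half-cylinders of shrinking radius), it satisfies $\#\partial B_R(y)=1$ for every $R>a$, and yet $\#\partial B_r(y)=2$ for $0<r\le a$. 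So your main line of argument, which concludes $\#\partial B_r(y)=1$ for all $r>0$ and hence $a=0$, proves something false --- which is exactly why the lemma is stated with $0\le a<R$ rather than $a=0$. Your closing parenthetical (``the same analysis applied on both sides of $y$'') gestures at the right picture but is not an argument: at that stage you do not yet know that $B_R(y)$ is an interval, or that $y$ has only ``two sides''.

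The missing piece, which is how the paper handles the ball $B_R(y)$, is the following. Take any $q\in B_R(y)\setminus\mathrm{Im}(\gamma)$ and join it to $\gamma(t)$ for large $t$ by a geodesic $h_{q,t}$; by continuity of the distance to $y$, $h_{q,t}$ meets $\partial B_R(y)=\{\gamma(R)\}$ at an \emph{interior} point, and non-branching then forces $h_{q,t}$ to contain all of $\gamma|_{[0,t]}$, so that $q$ lies on a ray $h_q$ extending $\gamma$ backwards through $y$. Taking a farthest such point $A$, with $a=d(y,A)$ (necessarily $a<R$, since $A\notin\mathrm{Im}(\gamma)$ while $\partial B_R(y)\subset\mathrm{Im}(\gamma)$), one concludes $Y=\mathrm{Im}(h_A)\cong([0,\infty),a)$. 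Without this step your proof does not control the part of $Y$ inside $B_R(y)$ and does not yield the stated conclusion.
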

\begin{proof}
Since $Y$ is noncompact, there exists a ray $\gamma: [0,\infty )\rightarrow Y$ such that $\gamma(0)=y$. By assumption, we have $\partial B_{R}(y) =\{\gamma (R)\}$.  For any $q\notin B_{R}(y)$, let $g_q$ be a geodesic from $y$ to $q$. Then $g_q(R) =\gamma(R)$.  The geodesic nonbranching property on $Y$ (\cite{nonbranching}) then implies that $g_q$ is a part of $\gamma$. In particular, we have $q\in \mathrm{Im}(\gamma)$. Thus $Y=B_{R}(y)\cup \mathrm{Im}(\gamma)$.

If $Y=\mathrm{Im}(\gamma)$, the proof finished. Otherwise,  choose $q\in B_{R} (y)\backslash 
\mathrm{Im}(\gamma)$. For each $t>R$, let $h_{q,t}$ be a geodesic from $q$ to $\gamma(t)$. the continuity of distance function implies that there is an interior point  $z_t$ of $h_{q,t}$ such that $d(y,z_t)=R$, thus $z_t=\gamma(R)$. The nonbranching property implies that $\gamma|_{[0,t]}$ is a part of $h_{q,t}$ (since $q\notin \mathrm{Im}(\gamma)$). Let $t\to\infty$, $h_{q,t}$ converges to a $ray$ $h_q$ containing $\gamma$.

Now let $a=\sup\limits_{x\in  Y\backslash \mathrm{Im}(\gamma)}d(x,y)\leq R$. The bounded compactness of $Y\backslash U_{\frac{a}{2}}(\mathrm{Im}(\gamma))$ (where $U_{\frac{a}{2}}(\mathrm{Im}(\gamma))$ is the $\frac{a}{2}$-open neighborhood of $\mathrm{Im}(\gamma)$) implies that there is a point $A\in Y\backslash \mathrm{Im}(\gamma)$ such that $d(y,A)=a$. It is clear that $Y=h_A$ and that in fact $a<R$.

\end{proof}

\begin{proof}[Proof of Theorem \ref{corresponding}]

By Proposition \ref{twoends}, we may assume that $M$ has only one end $E$. Let $\gamma$ be a ray in $M$ such that $\gamma(0)=p$ (so $\gamma$ is a ray in $E$).

Passing to a subsequence, assume that $(R_i^{-1}M,p,\gamma) \xrightarrow{pGH} (Y,y,\Gamma)$.  According to  Lemma \ref{raycriterion}, we may assume $\#(B_R(y))\geq 2$ for all $R>0$. By Cheeger-Colding splitting theorem \cite{almostrigidity}, there are  3 possibilities:

1. $Y$ is isometric to $\mathbb{R}$. Then we are done.

2. $Y\cong \mathbb{R} \times N$, where $N$ is not a point.

3.  $Y$ contains no lines.

 In both case 2 and 3, we can find an $r>10$ and a point $a\in \partial B_{r}(y)\backslash \{\Gamma(r)\}$ such that any  geodesic $h$ from $a$ to $\Gamma(r)$ does not intersect with $B_2(y)$ (in case 2, this is guaranteed by the product metric; in case 3,  if there exists no such an $r$, $Y$ would contain a line). 

 We choose $a_i\in M$ such that 
 \begin{equation}\label{pGHconvergence} (R_i^{-1}M,p,\gamma,a_i,h_i,\lambda_i) \xrightarrow{pGH} (Y,y,\Gamma,a,h,\lambda),
 \end{equation}
 where $h_i$ is a geodesic from $a_i$ to $\gamma(R_ir) $, and $\lambda_i$ is a geodesic from $p$ to $a_i$.
 
 Since $\mathrm{Im}(h)\cap B_2(y)=\emptyset$, it is clear that $\mathrm{Im}(h_i)\subset E_{R_i}$ for $i$ large. In particular, $a_i\in E_{R_i}$. So $\lambda_i(R_i+\frac{1}{2})\in B_{R_i+1}(p)\cap E_{R_i}$. By Proposition \ref{diamgrowth}, we have
 $$R_i^{-1}d(\lambda_i(R_i+\frac{1}{2}), \gamma(R_i+\frac{1}{2} ) )\to 0 \text{ as } i\to\infty. $$

Since $\lambda_i(R_i+\frac{1}{2})\to \lambda(1)$ and $\gamma(R_i+\frac{1}{2}) \to \Gamma(1)$  in the convergence (\ref{pGHconvergence}), we conclude that $\lambda(1)= \Gamma(1)$. The geodesics nonbranching property then forces $a=\lambda(r)= \Gamma(r)$, a contradiction.

\end{proof}

\begin{proof}[Proof of Corollary \ref{allscales}]
 By Theorem \ref{corresponding}, any asymptotic cone of $M$ is either $([0,\infty),a)$ for some $a\geq 0$ or $(\mathbb{R},0)$. In this case, the case $a>0$ actually will not happen because of the following well-known fact:

\begin{claim} \label{rescale1dimcone}
For any $a>0$, if $([0,\infty),a)$ is an asymptotic cone of $M$, then there exists an asymptotic cone of $M$ of the form $\mathbb{R}\times N$, where $N$ is not a point.
\end{claim}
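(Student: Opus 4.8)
The plan is to exploit the fact that when an asymptotic cone has base point at distance $a > 0$ from the endpoint $0$ of $[0,\infty)$, the base point is not a "tip" of the space, so nearby there is room for a line to develop after a further rescaling (i.e., by blowing \emph{up} the already-blown-down picture). Concretely, suppose $(R_i^{-1}M, p) \xrightarrow{pGH} ([0,\infty), a)$ with $a > 0$. Fix the point $q_a \in [0,\infty)$ corresponding to the base point; it sits at distance $a$ from the origin $q_0$. Pick points $p_i \in M$ with $p_i \to q_a$ in this convergence. I would then consider the rescaled sequence $(s_i^{-1}M, p_i)$ where $s_i = R_i/N$ for a large fixed constant $N$ — equivalently, I am zooming into a fixed small neighborhood of $q_a$ inside $([0,\infty),a)$, which looks like an open interval (since $a>0$), hence like $\mathbb{R}$ locally. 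More precisely, the ball $B_a(q_a) \subset [0,\infty)$ is isometric to the open interval $(0, 2a)$ and contains no endpoint, so on every fixed scale strictly below $a$ the limit space $([0,\infty),a)$ is indistinguishable from $(\mathbb{R},0)$.

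The key step is then: since $(s_i^{-1}M, p_i)$ has, in a ball of fixed radius, almost the geometry of a ball in $\mathbb{R}$, one extracts a subsequential limit $(Z, z)$ of $(s_i^{-1}M, p_i)$ which contains a line through $z$ (the line being the limit of the segments of the rescaled ray $\gamma$ and its "backward extension" that pass near $p_i$; the backward extension exists because, at scale $s_i \ll R_i$, the point $p_i$ lies well inside $M \setminus B_{10 s_i}(p)$ while still being at distance $\approx a R_i \gg s_i$ along a ray, so minimizing segments from $p_i$ to $p$ and from $p_i$ outward combine, in the limit, into a bi-infinite geodesic). Applying the Cheeger--Gromoll splitting theorem to $(Z,z)$ — valid since $Z$ is a Ricci limit space with $\mathrm{Ric} \geq 0$ — gives $Z \cong \mathbb{R} \times N$. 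Since $(s_i^{-1}M, p_i)$ and $(s_i^{-1} M, p)$ differ only by a bounded base-point shift (namely $s_i^{-1} d(p, p_i) \to Na$, a finite constant), $(Z,z)$ is still an asymptotic cone of $M$ (after possibly re-centering at a point of $Z$ in the $[0,\infty)$-fibre over $p$, or simply invoking that re-centering an asymptotic cone by a bounded-in-limit amount yields an asymptotic cone). Finally, $N$ is not a point: if it were, $Z \cong \mathbb{R}$, but then blowing $Z$ \emph{down} again would return $([0,\infty),a)$ or $([0,\infty),0)$ depending on where the basepoint limits, whereas $\mathbb{R}$ blows down only to $\mathbb{R}$ — contradiction. (Alternatively and more cleanly: if $Z \cong \mathbb{R}$ then $M$ has two ends by Proposition \ref{twoends}'s converse reasoning, forcing its unique asymptotic-cone behavior to be $(\mathbb{R},0)$, not $([0,\infty),a)$ with $a > 0$.)

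The main obstacle I anticipate is making precise the assertion that the rescaled sequence $(s_i^{-1}M, p_i)$ genuinely produces a line in the limit — one must verify that the minimizing geodesic from $p$ to $p_i$, when rescaled by $s_i$, and a ray emanating from $p_i$ "away from $p$" (e.g. a ray in the end $E$ starting at $p_i$, or the tail of $\gamma$ reparametrized), fit together with no angle in the limit. This is where the $a > 0$ hypothesis does the work: the limit geometry near $q_a$ being an honest interval (not a half-line with endpoint at $q_a$) means the Gromov--Hausdorff approximants have the segment $[p, p_i]$ and an outgoing segment nearly colinear at scale $s_i$, so the concatenation is $(1+\epsilon_i)$-almost-minimizing on every fixed window, and the standard limiting argument (as in the proof of the splitting theorem, or as in \cite{yezhucrelle} Proposition 3.3) yields the line. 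The rest is bookkeeping about base-point changes preserving the class of asymptotic cones.
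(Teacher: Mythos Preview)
Your approach has a genuine gap at the step where you claim to extract a line in the limit $(Z,z)$. With $s_i = R_i/N$ for a \emph{fixed} constant $N$, the sequence $(s_i^{-1}M, p_i)$ (note $d_{s_i^{-1}M}(p,p_i)\to 0$ since $p_i\to q_a$ and $p\to q_a$) converges to the $N$-rescaling of the original limit, namely $([0,\infty), Na)$ --- still a half-line, containing no line. Your observation that bounded balls around $q_a$ look like intervals is correct, but it says nothing about the global limit, which is determined and equals $([0,\infty),Na)$. If instead you let $N=N_i\to\infty$ and run a diagonal argument, the resulting limit is exactly $(\mathbb{R},0)$: every bounded ball becomes an interval, forcing $Z=\mathbb{R}$. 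So either way you obtain at best $\mathbb{R}\times\{\text{pt}\}$, and your two arguments for ``$N$ is not a point'' both fail: blowing down $Z=\mathbb{R}$ gives $\mathbb{R}$, not the original half-line; and having $(\mathbb{R},0)$ as \emph{one} asymptotic cone does not force $M$ to split (that requires $(\mathbb{R},0)$ to be the \emph{unique} asymptotic cone, cf.\ \cite{yezhucrelle} Proposition~3.3).

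The paper's proof repairs this by choosing the rescaling adaptively. It takes $z_i,w_i\in M$ approximating $0,2a\in[0,\infty)$, a geodesic $h_i$ from $z_i$ to $w_i$, and sets $d_i=d(p,\mathrm{Im}(h_i))$. If the $d_i$ stay bounded, then $h_i$ subconverges to a line in $M$ itself, forcing $M\cong\mathbb{R}\times N$ with $N$ compact --- but then $(\mathbb{R},0)$ would be the unique asymptotic cone, a contradiction. If $d_i\to\infty$ (and necessarily $d_i/r_i\to 0$ since $p$ lies on the limiting segment), one rescales by $d_i$: in the resulting asymptotic cone $(Y',y')$ the geodesics $h_i$ limit to a line $L$ with $d(y',L)=1$, and this positive distance is precisely what forces the cross-section $Z$ in $Y'\cong\mathbb{R}\times Z$ to be non-trivial. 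The crucial point you are missing is that the correct scale is neither $r_i$ nor $r_i/N$ for any prescribed $N$, but is dictated by how close the approximating geodesics $h_i$ come to the base point.
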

\begin{proof}
Assume that for  some $a>0$ we have
\begin{equation*}
(r_i^{-1}M,p,z_{i},w_{i})\to ([0,\infty),a, 0,2a). 
\end{equation*}

Let $h_i$ be a  geodesic from $z_{i}$ to $w_i$ and let $d_i=d(p,\mathrm{Im}(h_i))$. 

If $d_i$ is uniformly bounded above, then $h_i$ converges to a line in $M$. So $M\cong \mathbb{R}\times N$. $N$ must be compact since otherwise any asymptotic cone of $M$ would contain a half plane as a subspace.   But the compactness of $N$ then implies that  $(\mathbb{R},0)$ is the unique asymptotic cone of $M$, a contradiction. 

If $\lim\limits_{i\to\infty}d_i=\infty$, we note that $\lim\limits_{i\to\infty}d_ir_i^{-1} =0$ since $h_i$ converges to $[0,2a]$.  Consider the asymptotic cone obtained by $(d_i^{-1}M,p)\to (Y',y')$, we see that $h_i$ converges to a line $L$ in $Y'$ and   that $d(p, \mathrm{Im}(L))=1$. So $Y'\cong  \mathbb{R}\times Z$ for some metric space $Z$ which is not a single point. 
\end{proof}
By Claim \ref{rescale1dimcone}, any asymptotic cone of $M$  can only be either $([0,\infty),0)$  or $(\mathbb{R},0)$.  Now the connectedness of the set of all asymptotic cones of $M$  in pointed Gromov-Hausdorff distance (cf. \cite{pan1} Proposition 2.1) guarantees that the asymptotic cone of $M$ is unique, either $(\mathbb{R},0)$ or $([0,\infty),0)$.

\end{proof}

	\section{ Oscillating \textit{vs} stable volume growth}

	\subsection{Examples of oscillating volume growth}
	
		In this subsection, we construct the example described in Theorem \ref{example}.
	
	We consider the rotationally symmetric metric
	\begin{equation*}
	g=dt^2+f^2(t)ds^2_{n-1}
	\end{equation*}
	on $M=[0,\infty)\times S^{n-1}$, where $ds_{n-1}^2$ is the canonical metric on the unit sphere $\mathbb{S}^{n-1}$ and $n\geq 2$. Then all sectional curvatures of $(M,g)$ lie between $-\frac{f''}{f}$ and $\frac{1-f'^2}{f^2}$ (cf. section 4.2.3 of  \cite{petersenbook3rd}).

We will construct the function $f$ such that its growth rate oscillates infinitely often between rapid and slow. To ensure smoothness of $f$, we apply the following result due to Ghomi \cite{smoothingconvexfunc}:

\begin{thm} \label{smoothingconvex}
Let $f:\mathbb{R}\rightarrow\mathbb{R}$ be a convex function. Suppose that $A\subset \mathbb{R}$ is closed such that $\partial A$ is compact. If $f\in C^\infty(A)$, then there exists a convex function  $\tilde{f}\in  C^\infty (\mathbb{R})$ such that $\tilde{f}|_{A}=f$.  
\end{thm}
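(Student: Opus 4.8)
The plan is to use the one-dimensionality of the domain and to encode convexity as nonnegativity of the second derivative, reducing the statement to filling only finitely many gaps of $A$. Since $A$ is closed, $\mathbb{R}\setminus A=\bigsqcup_k I_k$ is a countable disjoint union of open intervals, and since $\partial A$ is compact, say $\partial A\subset[-R_0,R_0]$, every bounded component $I_k=(a_k,b_k)$ satisfies $a_k,b_k\in\partial A$ and hence $I_k\subset[-R_0,R_0]$, while at most two components are unbounded rays. The crucial structural observation is that the smoothness hypothesis should be read as $f$ being $C^\infty$ on an \emph{open neighborhood} $N\supset A$ (this is what is needed for the conclusion to hold, and is what occurs in our application: a Whitney jet alone can be inconsistent with convex smoothing, e.g.\ a jet with $f''=0$ but $f'''<0$ at a one-sided boundary point). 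Granting this, $f$ is already smooth and convex on $N$, and since the compact set $[-R_0,R_0]\setminus N$ is covered by the disjoint open gaps, only finitely many bounded gaps fail to be contained in $N$. Thus, after setting $\tilde f=f$ on $N$, it remains only to redefine $\tilde f$ on finitely many compact subintervals (together with at most two unbounded tails), and there is no accumulation of gaps to worry about.

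On each such gap I would confine the redefinition to a compact subinterval $[a',b']$ with $a',b'\in N$, where $f$ is genuinely smooth and convex, and reformulate the problem in terms of $\rho=\tilde f''$. Spelling out the conditions that $\tilde f$ agree with the full $\infty$-jet of $f$ at $a'$ and $b'$, the task becomes: find a smooth $\rho\ge 0$ on $[a',b']$ whose jets at $a'$ and $b'$ equal those of $f''$ and which satisfies
\begin{equation*}
\int_{a'}^{b'}\rho(s)\,ds=f'(b')-f'(a'),\qquad \int_{a'}^{b'}(b'-s)\rho(s)\,ds=f(b')-f(a')-f'(a')(b'-a').
\end{equation*}
Convexity of the original $f$ yields the compatibility sandwich $f'(a')\le\frac{f(b')-f(a')}{b'-a'}\le f'(b')$, which says precisely that the two prescribed integrals $M$ and $J$ satisfy $M\ge 0$ and $0\le J\le (b'-a')M$. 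Defining $\tilde f$ by double integration of $\rho$ (with the correct additive constants) then produces a smooth piece that is convex, since $\tilde f''=\rho\ge 0$, and that matches $f$ and all its derivatives at the two endpoints.

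The heart of the argument, and the step I expect to be the main obstacle, is the construction of this nonnegative $\rho$. Because $f''\ge 0$ is smooth on two-sided neighborhoods of $a'$ and $b'$, I would first take a nonnegative smooth $\rho_0$ realizing the two endpoint jets with support concentrated in thin intervals near $a'$ and $b'$ (for instance $f''$ times disjoint cutoffs), so that its mass $M_0$ and first moment $J_0$ can be made small; the degenerate cases $M=0$ or $J=0$ force $f$ to be affine on $[a',b']$ and are handled by taking $\rho=0$. One then adds an interior correction, built from two nonnegative model bumps placed near the two ends and vanishing to infinite order at the endpoints, to adjust the mass and first moment independently to the remaining targets $M-M_0$ and $J-J_0$. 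After choosing the concentration of $\rho_0$ appropriately — possible because the sandwich is strict whenever $f$ is not affine on $[a',b']$, which leaves positive slack — these targets lie in the achievable region $0\le J-J_0\le (b'-a')(M-M_0)$, and nonnegativity is preserved throughout. This is a finite-dimensional feasibility statement that I would verify by elementary moment bookkeeping.

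Finally, the at most two unbounded gaps are handled directly: past the largest point $b^*$ of a bounded-above $A$, I would keep $\tilde f=f$ on a short interval inside $N$ and then extend by a smooth convex tail whose second derivative equals $f''$ near the gluing point and tapers to a nonnegative constant at infinity, matching the jet and keeping $\tilde f''\ge 0$; symmetrically on the left. Assembling the finitely many pieces yields $\tilde f\in C^\infty(\mathbb{R})$ with $\tilde f''\ge 0$, hence convex, and with $\tilde f|_A=f$, which proves the theorem (this recovers the one-dimensional case of Ghomi's result \cite{smoothingconvexfunc}).
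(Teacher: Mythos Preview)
The paper does not prove this theorem at all: it is stated as a result of Ghomi and simply cited from \cite{smoothingconvexfunc}, with the clarification that ``$f\in C^\infty(A)$'' means $f$ is smooth on an open neighborhood $U\supset A$. So there is no proof in the paper to compare against; your sketch goes well beyond what the paper does.

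That said, your outline is a sensible approach to the one-dimensional case. Your reading of the smoothness hypothesis matches the paper's explicit convention, and the reduction to finitely many gaps via compactness of $\partial A$ is correct: if a bounded gap $I_k$ meets $\mathbb{R}\setminus N$, then it contributes a point to the compact set $[-R_0,R_0]\setminus N$, and disjointness of the gaps together with the existence of a finite subcover forces only finitely many such $I_k$. The moment reformulation $\tilde f''=\rho\ge 0$ with prescribed endpoint jets and the two integral constraints is also correct, and your observation that the degenerate equalities force $f$ affine on $[a',b']$ (hence all higher jets of $f''$ vanish at $a',b'$ and $\rho\equiv 0$ works) is right. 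The one place I would push you to be more careful is the feasibility step: you assert that concentrating $\rho_0$ near the endpoints makes $(M_0,J_0)$ small enough for the residual $(M-M_0,J-J_0)$ to land in the achievable cone of two interior bumps, but you should actually write down why the strict sandwich $0<J<(b'-a')M$ persists after subtracting $(M_0,J_0)$ and why two fixed bump profiles, scaled by nonnegative coefficients, can hit an arbitrary point in an open subcone. This is routine linear algebra once stated precisely, but as written it is only a plausibility argument. The unbounded tails are handled by the same mechanism and pose no extra difficulty.
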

In Theorem \ref{smoothingconvex}, the condition $f\in C^\infty(A)$ means that there exists an open set $U\subset\mathbb{R}$ such that $A\subset U$ and $f\in C^\infty(U)$.

	The function $f$ is provided by the following Proposition:
	
	\begin{prop}\label{concavefunc}
	There exists a smooth concave function $f:[0,\infty)\rightarrow [0,\infty)$ such that the following hold:
	
	\noindent (1) $f(t)=t$ on $[0,1]$;
	
	\noindent (2) there exists a sequence $R_0=1,R_1,R_2,\cdots, R_{k},\cdots$ such that $$R_{j+1}>(R_j+1)^2+1$$
	 for all $j\in\mathbb{N}$, and  
	$$f|_{[R_{4l-3}+1,R_{4l-2}-1]}=t^{\frac{1}{l+1}} , f|_{[R_{4l-1}+1,R_{4l}-1]}=t^{1-\frac{1}{l+1}}$$ for every $l\in \mathbb{N}_+$.
	\end{prop}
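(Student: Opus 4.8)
The plan is to first construct a continuous, concave ``model'' function $f_0$ on $[0,\infty)$ which on each relevant interval is either one of the prescribed power functions or an affine chord, and then to smooth its corners with Ghomi's Theorem~\ref{smoothingconvex} applied locally, arranging the smoothing to take place strictly inside the gaps so that the prescribed pieces survive untouched on the closed intervals. To fix notation, I would list the prescribed intervals in increasing order as $J_0=[0,1]$ (with $P_0(t)=t$) and $J_k=[R_{2k-1}+1,\,R_{2k}-1]$ for $k\ge1$, on which $f_0$ is required to equal $P_k(t)=t^{p_k}$, the exponents running through $p_0=1,\ p_1=\tfrac12,\ p_2=\tfrac12,\ p_3=\tfrac13,\ p_4=\tfrac23,\ p_5=\tfrac14,\ p_6=\tfrac34,\dots$, all lying in $(0,1]$. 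Writing $J_k=[\beta_k,\gamma_k]$ and $G_k=[\gamma_k,\beta_{k+1}]$ for the $k$-th gap, and letting $\ell_k$ be the chord of $G_k$ joining $(\gamma_k,P_k(\gamma_k))$ to $(\beta_{k+1},P_{k+1}(\beta_{k+1}))$ with slope $s_k$, I would set $f_0:=P_k$ on $J_k$ and $f_0:=\ell_k$ on $G_k$.

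The heart of the proof is the inductive choice of the $R_j$. Having fixed $R_0=1,\dots,R_j$, I would choose $R_{j+1}$ so large that (a) $R_{j+1}>(R_j+1)^2+1$, and (b) on every newly completed gap $G_k$ one has $P_k(\gamma_k)<P_{k+1}(\beta_{k+1})$ together with the strict slope condition
\[
P_{k+1}'(\beta_{k+1})\ <\ s_k\ <\ P_k'(\gamma_k).
\]
That (b) can be met: by (a) one has $\beta_{k+1}\gtrsim\gamma_k^{2}$, so $s_k=\dfrac{P_{k+1}(\beta_{k+1})-P_k(\gamma_k)}{\beta_{k+1}-\gamma_k}\sim\beta_{k+1}^{\,p_{k+1}-1}$, whereas $P_{k+1}'(\beta_{k+1})=p_{k+1}\beta_{k+1}^{\,p_{k+1}-1}$ with $p_{k+1}<1$ --- giving the left inequality --- and $P_k'(\gamma_k)$ is a fixed positive constant while $\beta_{k+1}^{\,p_{k+1}-1}\to0$ (as $p_{k+1}<1$) --- giving the right inequality; the ordering of the endpoint values is similar. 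There are only the three transition types (to a smaller, an equal, or a larger exponent), and in each the requirement on $R_{2k+1}$ amounts to a polynomial lower bound in terms of $R_{2k}$, so it is compatible with (a); in the equal-exponent case ($p_{k+1}=p_k$, e.g.\ $J_1\to J_2$) I would simply set $f_0:=P_k$ on $J_k\cup G_k\cup J_{k+1}$, with nothing to check. With the $R_j$ chosen this way, $f_0$ is concave, since $f_0'$ is non-increasing --- it drops at each $\gamma_k$ and each $\beta_{k+1}$ by (b), and is non-increasing within each power piece because $0<p_k\le1$ --- and moreover $f_0(0)=0$, $f_0>0$ on $(0,\infty)$, and $0<f_0'\le1$ (the first slope is $1$, and slopes only decrease).

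Next I would smooth $f_0$, whose only non-smooth points are the corners $\gamma_k,\beta_k$. Fixing a small $\mu\in(0,\tfrac12)$, I would enlarge each power piece slightly (only rightward for $J_0$, so that $f\equiv t$ persists on $[0,1]$); condition (b) persists for these slightly shifted intervals since it is an open condition with room to spare, and the remaining transition on $G_k$ then lives on a bounded interval disjoint from $J_k$ and $J_{k+1}$. On a suitable bounded interval I would place the concave model $\widehat f_k$ equal to $P_k$ on a short left segment, to a chord in the middle, and to $P_{k+1}$ on a short right segment, extended to a concave function on $\mathbb R$ by continuing $P_k$ to the left and $P_{k+1}$ to the right; taking $A$ to be the union of the two end segments, lying strictly inside the power-function regions (so $\partial A$ is finite, hence compact, and $\widehat f_k\in C^\infty(A)$), Theorem~\ref{smoothingconvex} applied to $-\widehat f_k$ yields a concave $\widetilde f_k\in C^\infty(\mathbb R)$ with $\widetilde f_k=P_k$, respectively $P_{k+1}$, near the two ends, hence agreeing to infinite order with the neighboring power pieces. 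Splicing $P_k$, then $\widetilde f_k$, then $P_{k+1}$ --- the two formulas coinciding with a single power function on a neighborhood of each splice point --- produces $f\in C^\infty([0,\infty))$, and the first block is handled the same way, performing the transition to $t^{1/2}$ strictly inside $G_0=[1,R_1+1]$. Finally I would verify: $f$ is concave ($C^\infty$ with $f''\le0$ on every piece); $f$ maps $[0,\infty)$ into $[0,\infty)$ (each piece is a positive power function or a concave function positive at both endpoints, and $f(0)=0$); and $f=t$ on $[0,1]$, $f=t^{p_k}$ on each $J_k$, which are exactly (1) and (2) for the sequence chosen above --- and additionally $0<f'\le1$ on $(0,\infty)$, which together with $f''\le0$ gives $\mathrm{sec}\ge0$ for the metric $dt^2+f^2ds_{n-1}^2$.

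The step I expect to be the main obstacle is (b): for each of the few transition types one has to produce an explicit polynomial threshold on $R_{2k+1}$ (in terms of $R_{2k}$) that simultaneously forces the chord slope $s_k$ to lie strictly between the two endpoint slopes of the flanking power pieces and the endpoint values to be correctly ordered, and then check that this threshold is consistent with the prescribed growth $R_{j+1}>(R_j+1)^2+1$. The only other point requiring care is bookkeeping --- keeping all of the smoothing strictly within the interiors of the gaps, so that the power functions are preserved verbatim on the closed intervals $J_k$.
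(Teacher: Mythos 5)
Your proposal is correct and follows essentially the same route as the paper: build a piecewise concave model by joining the prescribed power functions with chords across the gaps, enforce concavity via the slope criterion $g'(c)\leq\frac{g(c)-h(b)}{c-b}\leq h'(b)$ (which, as you note, holds once the next $R_j$ is taken large enough, compatibly with $R_{j+1}>(R_j+1)^2+1$), and then smooth the corners with Ghomi's theorem applied away from the closed intervals where the power functions must survive. Your asymptotic verification of the slope inequalities is just a more explicit version of the paper's observation that the connecting condition holds for $c\geq N(\alpha,\beta,b)$.
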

	\begin{proof}
	The following claim follows from basic propertys of convex functions:
	\begin{claim}\label{connectingclaim}
	Assume that $h:(a,b]\rightarrow \mathbb{R}$ and $g:[c,d]\rightarrow \mathbb{R}$ are both concave functions with $b<c$. Let
	\begin{align*}
	l:\mathbb{R}&\rightarrow\mathbb{R}\\
	t &\mapsto  \frac{g(c)-h(b)}{c-b}(t-b)+h(b)
	\end{align*} 
	be the line determined by two points $(b,h(b))$ and $(c,g(c))$. Then the function
	\[
	F(x) =
	\begin{cases}
	h, & \text{if } x \in [a, b], \\
	l, & \text{if } x \in [b,c], \\
	g, & \text{if } x \in [c,d].
	\end{cases}
	\]
	is a concave function on $(a,d]$ if and only if 
	$$g'(c)\leq \frac{g(c)-h(b)}{c-b}\leq h'(b).$$
	\end{claim}
	We note that the concave property of the resulting function $F$ in Claim \ref{connectingclaim} only relies on the cacave property of $h$ and $g$ and the behavior of $h|_{[b-\epsilon,b]}$ and $g|_{[c,c+\epsilon]}$ for an arbitrarily small $\epsilon>0$ .    
	
Choose any $0<\alpha,\beta<1$. If we set $h(t)=t^\alpha$,  $g(t)=t^{\beta}$ in Claim \ref{connectingclaim}, then the resulting function $F$ is concave if and only if 
		\begin{equation} \label{connectingline}
		\beta c^{\beta-1}\leq \frac{c^{\beta}-b^\alpha}{c-b}\leq \alpha b^{\alpha-1}. 
		\end{equation}
	It is direct to check that  inequality (\ref{connectingline}) holds if
		 $c\geq N(\alpha,\beta,b)$.
	
	To obtain $f$, we first connect $f_0:=t|_{(-\infty,2]}$ and $f_1:=t^{\frac{1}{2}}|_{[R_1,\infty)}$ using Claim \ref{connectingclaim}. We input $h=f_0$ and $g=f_1$ in Claim \ref{connectingclaim}, and output $F=F_1$. Then $F_1$ is concave if and only if 
	\begin{equation*}
	\frac{1}{2\sqrt{R_1}}\leq \frac{\sqrt{R_1}-2}{R_1-2}\leq 1. 
	\end{equation*} 
	Set $R_1=16$, then the above inequalities hold and thus $F_1$ is concave. We then use Theorem \ref{smoothingconvex} to obtain a smooth concave function $\tilde{F}_1$ such that $$\tilde{F}_1=F_1 \text{ on } (-\infty,1]\cup[R_1+1,\infty).$$ Then we choose any $R_2>(R_1+1)^2+1$ and
	construct the desired $f$ from $\tilde{F}_1|_{(-\infty,R_2]}$ step by step (note that we are free to let $R_{j+1}>(R_j+1)^2+1$ in every step).
	\end{proof}
	
	Choose $f$ as in Proposition \ref{concavefunc}. Note that $f$ is an increasing function by construction. Set $r_i=R_{4i-2}-1$. We have 
	\begin{align*}
	 \mathrm{vol}(B_{r_i}(p))\leq&c_n\int_0^{r_i}r_i^{\frac{n-1}{i+1}}dt\\
	 =&c_nr_i^{1+\frac{n-1}{i+1}}.    
	\end{align*}
		So $\mathrm{IV}(M)=1$.
		
		Set $r'_i=R_{4i}-1$. Since $R_{4i-1}+1<(r_i')^{\frac{1}{2}}$, we have
		{\small\begin{align*}
				 \mathrm{vol}(B_{r'_i}(p))\geq &c_n\int_{\sqrt{r_i'}}^{r_i'}t^{(n-1)(1-\frac{1}{i+1})} dt\\
				 \geq & c'_n ((r_i')^{(n-1)(1-\frac{1}{i+1})+1}-((r_i')^{\frac{1}{2}})^{(n-1)(1-\frac{1}{i+1})+1}).
				\end{align*} }
		This gives $\mathrm{SV}(M)=n$.
	
	\begin{rem}
	Condition (1) in Proposition \ref{concavefunc} ensures that the metric $g$ is smooth at $t=0$.
 	\end{rem}
	
	\subsection{Stable volume growth}
	
	We say that an open manifold $M$ has stable volume growth of order $k$ if there exists constants $0<C_1<C_2$ such that for all $R>1$,
	\begin{equation}\label{def:stabelvolgrowth}
	C_1R^k\leq \V(B_R(p))\leq  C_2R^k.
	\end{equation}
	The case $k=1$ and $k=n$ correspond to $M$ has linear/Euclidean  volume growth, respectively. For general growth order $k$, we prove the following result:   

	\begin{mainthm}\label{main1'}
	Let $M^n$ be an open manifold with $\mathrm{Ric}_M\geq 0$. Assume that $M$ is conic at infinity. If $M$ has stable volume growth of order $k$, then $\mathrm{dim}_{ub}(Y)\leq k$ for every asymptotic cone $Y$ of $M$.
	\end{mainthm}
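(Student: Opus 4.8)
\textbf{Proof proposal for Theorem \ref{main1'}.}

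The plan is to reduce the statement to the combination of Proposition \ref{mainestimate} and a volume upper bound for renormalized limit measures on asymptotic cones, in the spirit of the proof of Theorem \ref{nonnegativesec}, but now exploiting the two-sided bound (\ref{def:stabelvolgrowth}) in place of Theorem \ref{keythm!}. First I would fix any asymptotic cone $(Y,y)$ of $M$; since $M$ is conic at infinity, $Y = C(X)$ is a metric cone. Choosing a sequence $r_i \to \infty$ realizing $Y$ as the limit of $(r_i^{-1}M, p)$ and passing to a subsequence, I obtain a renormalized limit measure $\nu$ on $Y$, so that $(Y, y, \nu) \in \Omega$. The key point is that for a pointed blow-down the normalization is by $\V(B_{r_i}(p))$, which by (\ref{def:stabelvolgrowth}) is comparable to $r_i^k$; hence for every $R > 0$,
\begin{equation*}
\nu(B_R(y)) = \lim_{i\to\infty} \frac{\V(B_{Rr_i}(p))}{\V(B_{r_i}(p))} \leq \lim_{i\to\infty} \frac{C_2 (Rr_i)^k}{C_1 r_i^k} = \frac{C_2}{C_1} R^k.
\end{equation*}
(Here one should be slightly careful that the base point in the limit is $y$ and the measure is normalized at $p \to y$; since $d(p,\cdot)$ rescales correctly this causes no trouble, and the estimate holds for $R \geq R_0$ after absorbing constants, which is all that is needed.)

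Next I would invoke Proposition \ref{mainestimate}: if $\mathrm{dim}_{ub}(C(X)) = m+1$, then $\mathrm{SV}(C(X), \nu) \geq m+1$, meaning that for every $\alpha > 0$ there is a sequence $R_j \to \infty$ with $\nu(B_{R_j}(y)) \geq R_j^{m+1-\alpha}$. Comparing this with the upper bound $\nu(B_R(y)) \leq (C_2/C_1) R^k$ just derived forces $m + 1 - \alpha \leq k$ for all $\alpha > 0$, hence $m + 1 \leq k$, i.e. $\mathrm{dim}_{ub}(Y) = m+1 \leq k$. If instead $\mathrm{dim}_{ub}(C(X))$ is not of the form $m+1$ with $m \geq 0$ — for instance if $Y$ is a point or a half-line — the conclusion $\mathrm{dim}_{ub}(Y) \leq 1 \leq k$ (using Yau's theorem, $k \geq 1$) is immediate and can be disposed of separately. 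Since $(Y,y)$ was an arbitrary asymptotic cone, this completes the proof.

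The main obstacle, such as it is, is purely bookkeeping rather than conceptual: one must make sure that \emph{every} renormalized limit measure on $Y$ obtained along \emph{every} blow-down sequence satisfies the two-sided comparison, so that Proposition \ref{mainestimate} (whose conclusion concerns the particular $\nu \in \Omega$ attached to $Y$) can be applied to it. This is immediate from (\ref{def:stabelvolgrowth}), because the upper and lower bounds there hold at \emph{all} radii with \emph{uniform} constants, so the normalization factor $\V(B_{r_i}(p)) \asymp r_i^k$ is controlled regardless of which sequence $r_i \to \infty$ we pick; no oscillation of the volume order can occur. In fact the same argument shows the matching lower bound $\mathrm{dim}_H(Y) \geq $ (something) is not needed — only the upper estimate on $\nu$ enters — so the proof is genuinely a one-line deduction from the two results already in hand, modulo the elementary limit computation above. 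I would also remark that, unlike Theorem \ref{nonnegativesec}, here the conclusion holds for \emph{every} asymptotic cone precisely because the hypothesis is a $\limsup$/$\liminf$-free two-sided bound.
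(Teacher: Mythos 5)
Your proposal is correct and follows essentially the same route as the paper: use the two-sided bound $C_1R^k\leq \V(B_R(p))\leq C_2R^k$ to show that any renormalized limit measure $\nu$ on any asymptotic cone satisfies $\nu(B_R(y))\leq (C_2/C_1)R^k$, hence $\mathrm{SV}(Y,\nu)\leq k$, and then apply Proposition \ref{mainestimate} (valid since $Y$ is a metric cone by the conic-at-infinity hypothesis) to conclude $\mathrm{dim}_{ub}(Y)\leq k$. The only addition you make is the explicit (and harmless) case distinction when $\mathrm{dim}_{ub}(Y)<1$, which is disposed of by Yau's lower bound $k\geq 1$.
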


	Theorem \ref{main1'} is a direct application of Proposition \ref{mainestimate}. 

		\begin{proof}
		Let $r_i\to \infty$ such that $\lim\limits_{i\to\infty}(r^{-1}_iM,p)=(Y,y,\nu)$. By definition, we have $$C_1R^k\leq \V(B_R(p))\leq  C_2R^k$$
		for some $0<C_1<C_2$ and for any $R\geq 1$.  
		So 
		\begin{align*}
		\nu(B_R(y))&=\lim\limits_{i\to\infty}\frac{\mathrm{vol}(B^{r_i^{-1}M}_{R}(p))}{\mathrm{vol}(B^{r_i^{-1}M}_{1}(p))}
			\\&=\lim\limits_{i\to\infty} \frac{\mathrm{vol}(B_{Rr_i}(p))}{\mathrm{vol}(B_{r_i}(p))}\\
		 &	\leq \lim\limits_{i\to\infty}\frac{C_2(Rr_i)^k}{C_1(r_i)^k}\\
		 &= \frac{C_2}{C_1}R^k.
		\end{align*}
		Therefore $\mathrm{SV}(Y,\nu)\leq k$. Since $M$ is conic at infinity, Proposition \ref{mainestimate} applies, and we conclude that $\mathrm{dim}_{ub}(Y)\leq k$.
		\end{proof}
		\begin{rem}
		The example of a rotational paraboloid shows that the inequality in Theorem \ref{main1'} may be strict .
		\end{rem}

		\section{Appendix:  a new proof of Sormani's sublinear diameter growth theorem}
	
In this appendix, we give a new proof  Sormani's Theorem \ref{Sormanisublineardiamgrowth}. 

	Fix a ray $\gamma$ on $M$ with $p:=\gamma(0)$ and let
	$$b(x):=\lim\limits_{R\to\infty} (R-d(x,\gamma(R)))$$
	be related Busemann function of $\gamma$. The original conclusion of Sormani \cite{Sormani2000} is that
	\begin{equation}\label{levelsetdiam}
	\lim\limits_{R\to\infty} \frac{\mathrm{diam}(b^{-1}(R))}{R}=0.
	\end{equation}
	
	It is clear that (1) in Theorem \ref{Sormanisublineardiamgrowth} implies (\ref{levelsetdiam}). 
	We note that (\ref{sublinear}) also implies (\ref{levelsetdiam}).  Indeed, by triangle inequality (note that $R=d(p,\gamma(R))$), we have for all $x\in M$:
	{\small \begin{align*}
	d(p,x)&\geq R-d(x,\gamma(R))\geq\\
	 & R- d(x,\gamma(d(p,x)))- d(\gamma(d(p,x)),\gamma(R))\\
	=& d(p,x)- d(x,\gamma(d(p,x))), \text{  for any $R> d(p,x)$}.
	\end{align*}}
	
	 Let $R\to\infty$, we obtain 
	\begin{equation*}
	d(p,x)- d(x,\gamma(d(p,x))  \leq b(x)\leq d(p,x) \text{ for all $x\in M$}.
	\end{equation*}
	Combined with (\ref{sublinear}), we obtain
	\begin{equation}\label{ratioto1}
	\lim\limits_{d(p,x)\to \infty} \frac{b(x)}{d(p,x)}=1.
	\end{equation}
	Now  (\ref{levelsetdiam}) follows easily from (\ref{sublinear}) and (\ref{ratioto1}).

		The original proof of Sormani involves many technical estimates and definitions, especially a careful analysis of the Busemann functions using Cheeger-Colding almost rigidity theory \cite{almostrigidity}.
		
		The new proof presented here  builds on the  nonbranching property of $\mathtt{RCD}$/Ricci limit spaces (\cite{nonbranching}, \cite{coldingnaber}).    Assume that the asymptotic cone of $M$ is not unique. Then we can find an asymptotic cone $(r_i^{-1}M,p)\to (Y,y)$ of $M$ such that $\#(\partial B_R(y))=\infty$ for every $R>0$ (Proposition \ref{numberofboundarypoint}). So for any fixed $R_0>100$ we can find   $m$ points $a_1,\cdots,a_m$ in $\partial B_{R_0}(y)$ for every $m\in\mathbb{N}_{+}$. It is clear from the nonbranching property of $Y$ that $\overline{ya_i}\cap \overline{ya_{j}}=\{y\}$ for any $i\neq j$, where $\overline{ab}$ denotes a  geodesic from $a$ to $b$ (thoughtout this paper, geodesics are assumed to be minimal and of unit speed).  Now consider a sequence of points $a_{ij}\in r_i^{-1}M$ such that $a_{ij}\to a_j$ as $i\to\infty$. For each $j$, the union of all  geodesics from $a_{ij}$ to $B_1(p)$ has volume lower bound $C(n)\V(B_1(p))r_i$. Moreover, these regions are contained in  $B_{2R_0r_i}(p)$, and are pairwise disjoint outside $B_{r_i}(p)$. Since the number $m$ of these regions can be made arbitrarily large, it is clear that the ratio $\frac{\V B_{2R_0r_i}(p)}{r_i}$ cannot have a uniform upper bound independent of $i$.
	
	We now present the detailed proof.

		 	    						\begin{lem}\label{manygeodesics}
		 	    						 Let $(Y,y)$ be a Ricci limit space and let $a>0$. Let $\gamma:[0,\infty)\rightarrow Y$ be a ray such that $\gamma(0)=y$. Assume that $q\in \partial B_{a}(y), q\neq \gamma(a)$, and that a  geodesic $h$ from $q$ to $\gamma(a)$ does not pass $y$. Set $2L=d(q,\gamma(a))$. Then  for any two different points $q_1,q_2$ in $h|_{[L, 2L]}$, and any  geodesic $\lambda_i$ from $q_i$ to $y$ (i=1,2), we have $\mathrm{Im}(\lambda_1)\cap \mathrm{Im}(\lambda_2)=\{y\}$.
		 	    						 \end{lem}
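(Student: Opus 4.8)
The plan is a proof by contradiction, whose only real input is the geodesic nonbranching of the Ricci limit space $Y$ (\cite{nonbranching}, \cite{coldingnaber}). Since geodesics in $Y$ are in general not unique, I would first extract the two consequences of nonbranching that I actually use. First, a \emph{uniqueness principle}: if $d(x,y)=d(x,w)+d(w,y)$ with $x\neq w$, then the geodesic from $w$ to $y$ is unique; indeed, given two of them, $\mu$ and $\mu'$, and any geodesic $\sigma$ from $x$ to $w$ (which exists since $Y$ is proper), the concatenations $\sigma\cdot\mu$ and $\sigma\cdot\mu'$ have length $d(x,y)$, hence are geodesics agreeing on the nondegenerate arc $\sigma$, so by nonbranching $\mu=\mu'$. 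Second, a \emph{splicing principle}: if $w$ lies on a geodesic $\sigma$ from $x$ to $y$, then replacing the part of $\sigma$ beyond $w$ by any geodesic from $w$ to $y$ again gives a geodesic from $x$ to $y$. Together these let me ``straighten'' and compare geodesics step by step.

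Suppose $z\in(\mathrm{Im}(\lambda_1)\cap\mathrm{Im}(\lambda_2))\setminus\{y\}$. The first step is to reduce to the case where $z$ is one of the $q_i$; say $q_1\in\mathrm{Im}(\lambda_2)$ after relabelling. If $z\neq q_1,q_2$, then $z$ is interior to both $\lambda_i$ and $d(q_i,y)=d(q_i,z)+d(z,y)$ with $q_i\neq z$, so by the uniqueness principle $\lambda_1$ and $\lambda_2$ share the subgeodesic $\mu$ from $z$ to $y$. Reversing, $\lambda_1^{-1}$ and $\lambda_2^{-1}$ both begin with the nondegenerate arc $\mu^{-1}$ (here $z\neq y$ is used), so by nonbranching one is an initial segment of the other; relabelling so that $d(q_1,y)\leq d(q_2,y)$ yields $q_1\in\mathrm{Im}(\lambda_2)$, i.e. $d(q_2,y)=d(q_2,q_1)+d(q_1,y)$.

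Now I would run a short case analysis on the position of $q_1$ on $h$. Writing $q_i=h(t_i)$ with $t_i\in[L,2L]$, let $P\in\{h(0),h(2L)\}=\{q,\gamma(a)\}$ be the endpoint of $h$ with $q_1$ lying between $q_2$ and $P$ along $h$, and let $g$ be that subarc (a geodesic from $q_2$ to $P$ through $q_1$). By the uniqueness principle the geodesic from $q_2$ to $q_1$ is unique, so $g$ and $\lambda_2$ coincide from $q_2$ up to $q_1$; being geodesics out of $q_2$ that agree on a nondegenerate initial arc, nonbranching makes them coincide up to distance $\min\{d(q_2,P),d(q_2,y)\}$ from $q_2$. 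If $d(q_2,y)\leq d(q_2,P)$ then $y\in\mathrm{Im}(g)\subset\mathrm{Im}(h)$, contradicting that $h$ avoids $y$. Otherwise $P\in\mathrm{Im}(\lambda_2)$ and $d(q_2,y)=d(q_2,P)+a$; if $P=q$ this gives $t_2+a\leq d(q_2,\gamma(a))+a=(2L-t_2)+a$, so $t_2\leq L$, while $q_1$ lying between $q_2$ and $q$ forces $t_2>t_1\geq L$, a contradiction.

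The remaining case $P=\gamma(a)$ is where the real work is, and I expect it to be the only delicate point. Here $\gamma(a)\in\mathrm{Im}(\lambda_2)$, so the part of $\lambda_2$ from $q_1$ to $y$ (which, by the uniqueness principle, is $\lambda_1$) passes through $\gamma(a)$, and its subarc from $\gamma(a)$ to $y$ must be $\gamma|_{[0,a]}$ reversed. Then $\lambda_1^{-1}$ is a geodesic issuing from $y$ that agrees with the ray $\gamma$ on $[0,a]$, so --- $\gamma$ being an infinite ray --- nonbranching forces $\lambda_1^{-1}$ to be a subarc of $\gamma$; hence $q_1=\gamma(d(q_1,y))\in\mathrm{Im}(\gamma)$ with $d(q_1,y)=(2L-t_1)+a>a$. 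Now $h$ passes through the two distinct $\gamma$-points $q_1$ and $\gamma(a)$, and by the uniqueness principle the subarc of $h$ between them is the corresponding subarc of $\gamma$; so $h^{-1}$, issued from $\gamma(a)$, agrees with the ray $s\mapsto\gamma(a+s)$ on a nondegenerate initial arc, and nonbranching again gives $q=h(0)=\gamma(a+2L)$, i.e. $d(q,y)=a+2L$. Since $2L=d(q,\gamma(a))>0$ (as $q\neq\gamma(a)$, for otherwise $h$ is trivial and carries no $q_1\neq q_2$), this contradicts $q\in\partial B_a(y)$. The degenerate possibilities $q_1\in\{q,\gamma(a)\}$ are handled by the same arguments with the roles of $q_1$ and $q_2$ swapped, and the lemma follows.
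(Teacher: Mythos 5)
Your proof is correct and follows essentially the same route as the paper: a contradiction argument from the nonbranching of geodesics in Ricci limit spaces, with the same three effective sub-cases (forcing $y\in\mathrm{Im}(h)$, forcing $t_2\le L$ against $t_2>L$, and forcing $q=\gamma(a+2L)$ against $q\in\partial B_a(y)$). Your explicit uniqueness and splicing principles merely make tidier some steps the paper leaves implicit.
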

		 	    						 \begin{proof}
		 	    						 The proof is a contradiction argument based on the  nonbranching property on $Y$. 
		 	    						 We may assume $q_i=h(t_i)$ and $t_1<t_2$. Assume that $\mathrm{Im}(\lambda_1)\cap \mathrm{Im}(\lambda_2)$ contains a point other than $y$.  There are 3 possibilities:
		 	    						 
		 	    						 1. $d(y,q_1)<d(y,q_2)$:  this implies $q_1$ is an interior point of $\lambda_2$. Note that $q_1$ is also an interior point of $h|_{[0,t_2]}$. Since $\mathrm{Im}(h)\cap \{y\}=\emptyset$, this forces $q\in \lambda_2$. Thus
		 	    						 \begin{equation*}
		 	    						 a+L< d(y,q)+d(q,q_2) =d(y,q_2)\leq d(y,\gamma(a))+d(\gamma(a),q_2)<a+L,
		 	    						 \end{equation*}
		 	    						 a contradiction.
		 	    						 
		 	    						 2. $d(y,q_1)=d(y,q_2)$: this implies $q_1=q_2$, a contradiction.

		 	    						 3. $d(y,q_1)>d(y,q_2)$: this implies $q_2$ is an interior point of $\lambda_1$. Note that $q_2$ is either $\gamma(a)$
		 	    						  or an interior point of $h|_{[t_1,2L]}$, both implies that $\gamma(a)$ is an interior point  of $\lambda_1$. Since $\gamma$ is a ray, we conclude that $q_1= \gamma (a+ 2L-t_1 )$. This further forces $q=\gamma(a+2L)$, contradicting $q\in \partial B_{a}(y)$.
		 	    						 \end{proof}
		 	    						 
		 	    						 \begin{rem}
		 	    						 Consider the example of $\mathbb{S}^1$, we see that the condition $\gamma$ is a ray in  Lemma  \ref{manygeodesics} is necessary.
		 	    						 \end{rem}

		 		 \begin{prop}\label{numberofboundarypoint}
		  	 	    						 Let $(Y,y)$ be a noncompact Ricci limit space. Denote by $f(R)=\#(\partial B_R(y))$, defined on $(0,\infty)$. Then there are only 4 possibilities:
		  	 	    						 
		  	 	    						 1. $f\equiv 1$. This happens if and only if $(Y,y)=([0,\infty), 0)$;
		  	 	    						 
		  	 	    						 2. $f=\begin{cases}
		  	 	    						 2&, R\in (0,a]\\
		  	 	    						 1&, R\in (a,\infty)
		  	 	    						 \end{cases}$ for some $a>0$.
		  	 	    						 This happens if and only if $(Y,y)= ([0,\infty), a)$;
		  	 	    						 
		  	 	    						 3. $f\equiv 2$. This happens if and only if $(Y,y)=(\mathbb{R}, 0)$;
		  	 	    						 
		  	 	    						  4. $f\equiv \infty$. 
		  	 	    						 \end{prop}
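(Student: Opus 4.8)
The plan is to split into cases according to whether $f$ attains the value $1$: Lemma \ref{raycriterion} disposes of that case, and in the remaining case one decides between $(Y,y)=(\mathbb R,0)$ and $f\equiv\infty$ by asking whether a geodesic between two ``antipodal'' sphere points can avoid $y$. Throughout, fix a ray $\gamma$ with $\gamma(0)=y$; it exists since $Y$ is noncompact, so $f(R)\geq 1$ for all $R$. The ``if'' directions are immediate computations: in $([0,\infty),0)$ one has $\partial B_R(0)=\{R\}$; in $([0,\infty),a)$ with $a>0$, $\partial B_R(a)=\{a-R,a+R\}$ for $R\leq a$ and $\{a+R\}$ for $R>a$; in $(\mathbb R,0)$, $\partial B_R(0)=\{-R,R\}$. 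For the ``only if'' direction, if $f(R_0)=1$ for some $R_0>0$ then Lemma \ref{raycriterion} gives $(Y,y)\cong([0,\infty),a)$ with $0\leq a<R_0$, and the computation just made puts $f$ in form $1$ (if $a=0$) or form $2$ (if $a>0$). It remains to treat the case $f(R)\geq 2$ for every $R>0$.

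Assume now $f\geq 2$ everywhere, and consider the condition $(\ast)$: for every $R>0$ and every $q\in\partial B_R(y)\setminus\{\gamma(R)\}$, every geodesic from $q$ to $\gamma(R)$ passes through $y$. Suppose $(\ast)$ holds; I claim $(Y,y)=(\mathbb R,0)$. Since $\gamma(R)$ is an interior point of a ray, the minimal geodesic from $y$ to $\gamma(R)$ is unique, namely $\gamma|_{[0,R]}$ (if $\sigma$ were a second one, then $\sigma$ followed by $\gamma|_{[R,2R]}$ and $\gamma|_{[0,2R]}$ would be two minimal geodesics from $y$ to $\gamma(2R)$ agreeing on a terminal subarc; reverse them and apply geodesic nonbranching \cite{nonbranching,coldingnaber}). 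Hence by $(\ast)$ a geodesic from $\gamma(R)$ to a point $q\in\partial B_R(y)\setminus\{\gamma(R)\}$ must first run back along $\gamma$ to $y$; any two such geodesics agree on the initial subarc ``$\gamma|_{[0,R]}$ reversed'', so by nonbranching they coincide, forcing $q$ to be unique. Thus $\partial B_R(y)=\{\gamma(R),\gamma^-(R)\}$ with $\gamma^-(R)$ well-defined, and a coherence argument (again nonbranching) assembles the $\gamma^-(R)$ into a ray $\gamma^-$ from $y$ with $d(\gamma(t),\gamma^-(s))=t+s$; then $\gamma\cup\gamma^-$ is a line, every point of $Y$ lies on it (its distance to $y$ equals some $R$, hence it lies in $\partial B_R(y)$), so $Y=\mathbb R$ and $f\equiv 2$: case $3$.

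Finally, suppose $f\geq 2$ everywhere and $(\ast)$ fails; the goal is $f\equiv\infty$. Failure of $(\ast)$ gives $R_*>0$, $q\in\partial B_{R_*}(y)\setminus\{\gamma(R_*)\}$, and a geodesic $h$ from $q$ to $\gamma(R_*)$ with $y\notin\mathrm{Im}(h)$. By Lemma \ref{manygeodesics}, geodesics joining distinct points of $h|_{[L,2L]}$ to $y$ pairwise meet only at $y$; since $h|_{[L,2L]}$ is uncountable, for each $R\in(0,R_*)$ the points at arclength $R$ from $y$ along those of these geodesics that are long enough (a nondegenerate, hence uncountable, family, namely those for points near $\gamma(R_*)$) are pairwise distinct, so $\#\partial B_R(y)=\infty$ for all $R\in(0,R_*)$. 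To promote this to all radii, I would first prove the following consequence of nonbranching: among all minimal geodesics from $y$ to a fixed point $a$, at most one extends to a minimal geodesic from $y$ of length $>d(y,a)$ — indeed, if $\sigma,\sigma'$ both extended, to $\tilde\sigma,\tilde\sigma'$ say, then concatenating $\sigma$ with the portion of $\tilde\sigma'$ beyond $a$ yields a minimal geodesic from $y$ which, reversed, agrees with the reversal of $\tilde\sigma'$ on an initial subarc, so nonbranching forces $\sigma=\sigma'$. This lemma gives, for $R'<R$, an injection of the set of those points of $\partial B_R(y)$ through which some minimal geodesic from $y$ passes into the analogous set at radius $R'$; together with extendability of minimal geodesics on a (noncompact) Ricci limit space off a controlled set, the aim is to contradict the coexistence of $\#\partial B_{R_1}(y)<\infty$ at some $R_1$ with $\#\partial B_R(y)=\infty$ at the small radii just found, yielding $f\equiv\infty$: case $4$.

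The main obstacle is exactly this last step. The branching configuration produced by Lemma \ref{manygeodesics} lives at bounded radii, and one must rule out that $Y$ is ``fat in a bounded region but line-like outside it''; controlling $\partial B_R(y)$ at radii larger than where that configuration sits is where the at-most-one-extension lemma has to be combined carefully with nonbranching and with extendability properties specific to Ricci limit spaces. Everything else — the model-space computations, the uniqueness statements extracted from nonbranching, and the coherence argument producing $\gamma^-$ — is routine once this is in hand.
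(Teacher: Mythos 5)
Your treatment of cases 1--3 is sound: reducing to $f\geq 2$ everywhere via Lemma \ref{raycriterion}, and, under your condition $(\ast)$, using nonbranching at $\gamma(R)$ to force $\partial B_R(y)=\{\gamma(R),\gamma^-(R)\}$ and assembling the $\gamma^-(R)$ into a line is a correct (and slightly different from the paper's) way to land in case 3. The ``if'' computations and the use of Lemma \ref{manygeodesics} to get $\#\partial B_R(y)=\infty$ for all $R<R_*$ when $(\ast)$ fails are also fine.

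The gap is exactly where you flag it, and it is genuine: your argument produces infinitely many boundary points only at radii \emph{below} the scale $R_*$ at which the branching configuration sits, and your proposed repair cannot close it. The ``at most one extension'' lemma gives, for $R>R'$, an injection of (geodesics from $y$ of length $\geq R$) into $\partial B_{R'}(y)$, i.e.\ roughly $f(R)\leq f(R')$ for $R>R'$. That monotonicity points the wrong way: it is perfectly consistent with $f\equiv\infty$ on $(0,R_*)$ and $f<\infty$ at some $R_1>R_*$, so no contradiction can be extracted from it, and the appeal to ``extendability of minimal geodesics off a controlled set'' is not a theorem you can invoke here. The paper closes this by running a different dichotomy: it \emph{fixes} the radius $a$ with $f(a)<\infty$ and considers, for each $i\to\infty$, a point $y_i\in\partial B_i(y)\setminus\{\gamma(i)\}$ and a geodesic $h_i$ from $y_i$ to $\gamma(i)$. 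If every $h_i$ meets $B_a(y)$, then (since both endpoints recede to infinity while the geodesics pass through a fixed ball) a subsequence converges to a line, $Y$ splits as $\mathbb{R}\times N$, and $N$ must be a point (a nontrivial $N$ contains a segment, hence $Y$ contains a flat strip and $f\equiv\infty$, contradicting $f(a)<\infty$). If instead some $h_i$ avoids $B_a(y)$ entirely, then every point of $h_i|_{[L_i,2L_i]}$ is at distance $\geq a$ from $y$, so the pairwise-branching geodesics of Lemma \ref{manygeodesics} all reach $\partial B_a(y)$ at distinct points, giving $f(a)\geq k$ for every $k$ --- a contradiction \emph{at the radius $a$ itself}. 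The essential trick you are missing is to take the cross-geodesics at radii tending to infinity while measuring the branching at the fixed radius where finiteness is assumed, rather than trying to transport infinitude upward from small radii.
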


		  	 	    						 \begin{proof}
		  	 	    						 Case 1 and 2 follows directly from Lemma \ref{raycriterion}. The analysis of case 3, $f\equiv 2$ is also similar to that of Lemma \ref{raycriterion}. So the proof is reduced to the following:
		  	 	    						 
		  	 	    						 \begin{claim} \label{raycriterion2}
		  	 	    						 If $f(a)<\infty$ for some $a>0$, then  one of case 1, 2, 3 happens. 
		  	 	    						 \end{claim}
		  	 	    						 \begin{proof}
		  	 	    						 By Lemma \ref{raycriterion}, we may assume that $\#(\partial B_R(p))\geq 2$ for any $R>0$. 
		  	 	    						  Let $\gamma: [0,\infty )\rightarrow Y$ be a ray such that $\gamma(0)=y$. For any $i\in \mathbb{N}_{+}$, we choose a point $y_{i}\in \partial B_{i}(y)$ other than $\gamma(i)$. Let $h_i:[0,2L_i]\rightarrow Y$ be a  geodesic from $y_i$ to $\gamma(i)$. 
		  	 	    						 
		  	 	    						 If $\mathrm{Im}(h_i)\cap B_{a}(p)\neq \emptyset$ for every $i\in \mathbb{N}_{+}$, then $h_{i}$ converges to a line passing to a subsequence and $Y$ splits as $\mathbb{R}\times N$. If $N$ is a point, then case 3 happens. If $N$ is not a point, it must contain a segment $[0,l]$ for some $l>0$. Hence $Y$ contains a flat strip $\mathbb{R}\times [0,l]$. This implies that $f(R)\equiv \infty$, which contradicts $f(a)<\infty$.
		  	 	    						 
		  	 	    						 Now assume that $\mathrm{Im}(h_i)\cap B_{a}(y)=\emptyset$ for some $i\in \mathbb{N}_{+}$.  Fix a $k>f(a)=\#(\partial B_a(y))$ and choose different points $x_1,x_2,\cdots,x_k$ in $\mathrm{Im} (h_i|_{[L_i,2L_i]})$.
		  	 	    						 Let $\Gamma_j$ be a  geodesic from $y$ to $x_j$ ($j=1,\cdots,k$). It follows from Lemma \ref{manygeodesics} that  any two of them intersect only at $\{y\}$. 
		  	 	    						 Especially, $\Gamma_1(a),\Gamma_2(a),\cdots , \Gamma_k(a)$ are $k$ different points in $\partial B_a(y)$. This  contradicts $f(a)<k$.
		  	 	    						 \end{proof}
		  	 	    						 
		  	 	    						  \end{proof}
		  	    						
Let $M$ be an open manifold with nonnegative Ricci curvature. We recall that if an asymptotic cone of $M$ is $([0,\infty),a)$ for some $a>0$, then there exists an asymptotic cone $(Y,y)$ such that  $\#(\partial B_R(y))\equiv \infty$ (cf. Claim \ref{rescale1dimcone}). Therefore, if the asymptotic cone of $M$ is not unique and is either $(\mathbb{R},0)$ or $([0,\infty),0)$, then there must exists an asymptotic cone $(Y,y)$ of $M$ such that $\#(\partial B_R(y))=0$ for all   $R>0$.

		 	    				\begin{proof}[Proof of Theorem \ref{Sormanisublineardiamgrowth}]
		Suppose the contrary; then by the analysis above, there exists an asymptotic cone
		\begin{equation}\label{asymcone}
		(r_i^{-1}M,p)\to (Y,y)
		\end{equation} 
		such that $\#(B_{R}(y))=\infty$ for all $R>0$. Fix an $R_0>100$. For any $m\in \mathbb{N}_{+}$, we can choose $m$ different points $a_1,\cdots, a_m$ in $\partial B_{R_0}(y)$. Let $h_j$ be any geodesic from $y$ to $a_j$.   Since $Y$ is nonbranching,  we have  $h_i\cap h_j=\{y\}$ for $i\neq j$. 
		 	 	    						
			  We choose points $a^{(i)}_{j}\in M$ such that  for each $j$ we have $a^{(i)}_{j}\to a_j$ in the convergence (\ref{asymcone}). 
		  Denote by $S^{(i)}_j$ the set of all (images of)   geodesics from $\overline{B}_1(p)$ to  $a^{(i)}_j$. Passing to a subsequence, we may assume 
			 	    						$$(r_i^{-1}M,p,a^{(i)}_j,S^{(i)}_{j})\to (Y,y, a_j,S_j)$$
			 	    			for every $j$. Note that $S_j$ consists of (the image of) some geodesics from $y$ to $a_j$.
			 	    			Then $S_j\cap S_{j'}=\{y\}$ for any $j\neq j'$.			
			 	    						
			 	    						By a contradiction argument based on the nonbranching property of  $Y$, it is clear that there exists an $i_0>0$ such that  
			 	    					$$S^{(i)}_j\cap S^{(i)}_{j'}\cap (M\backslash B_{r_i}(p))=\emptyset$$
			 	    					for any $j\neq j'$ and $i>i_0$.
			 	    					
			 	  Denote by $C^{(i)}_{j}=S^{(i)}_{j}\cap B_{\frac{R_0r_i}{2}}(a^{(i)}_{j})$, then $C^{(i)}_{j}$ are mutually disjoint for $i$ large.   By Bishop-Gromov relative volume comparison, we have
			 	 $$\V (C^{(i)}_{j}) \geq C(n)\V(B_1(p))r_i, $$
			 	 where $C(n)$ is a constant only rely on $n$.
			 	    					
			 	    	Now
			 	  \begin{align*}
			  \V (B_{2R_0r_i})\geq & \sum_{j=1}^{m} \V (C^{(i)}_{j})\\
			 	  \geq & mC(n)\V (B_1(p))r_i, \text{ for all $i$ sufficiently large.}
			 	  \end{align*}  					
			 	  Since $R_0$ is fixed and $m$  can be arbitrarily large,  we conclude that $M$ cannot have linear volume growth.
		 	    				\end{proof}

		\bibliographystyle{plain} 
		\bibliography{refs}
	    \end{document}